\newcommand{\N}{\mathbb N}
\newcommand{\R}{\mathbb R}
\newcommand{\C}{\mathbb C}
\newcommand{\eps}{\epsilon}
\newcommand{\Gact} {\gamma_{\text{c}}}
\newcommand{\Gace} {\gamma_{\emph{c}}}
\newcommand{\alphct} {\alpha_{\text{c}}}
\newcommand{\alphce} {\alpha_{\emph{c}}}
\newcommand{\re}[1]{\mbox{Re} \ #1} 
\newcommand{\im}[1]{\mbox{Im} \ #1} 
\newcommand{\defendproof}{\hfill $\Box$} 
\newtheorem{theorem}{Theorem}[section]
\newtheorem{lem}[theorem]{Lemma} 
\newtheorem{prop}[theorem]{Proposition}
\newtheorem{coro}[theorem]{Corollary} 
\theoremstyle{definition}
\newtheorem{defi}[theorem]{Definition}
\newtheorem{rem}[theorem]{Remark}
\title[Blowup solutions intercritical NL4S]{On Blowup solutions to the focusing intercritical nonlinear fourth-order Schr\"odinger equation} 
\author[V. D. Dinh]{Van Duong Dinh}
\address[V. D. Dinh]{Institut de Math\'ematiques de Toulouse UMR5219, Universit\'e Toulouse CNRS, 31062 Toulouse Cedex 9, France and Department of Mathematics, HCMC University of Pedagogy, 280 An Duong Vuong, Ho Chi Minh, Vietnam}
\email{dinhvan.duong@math.univ-toulouse.fr}
\keywords{Nonlinear fourth-order Schr\"odinger equation; Blowup; Concentration; Limiting profile}
\subjclass[2010]{35B44, 35G20, 35G25}
\begin{document}

\maketitle
\begin{abstract}
In this paper we study dynamical properties of blowup solutions to the focusing intercritical (mass-supercritical and energy-subcritical) nonlinear fourth-order Schr\"odinger equation. We firstly establish the profile decomposition of bounded sequences in $\dot{H}^{\Gact} \cap \dot{H}^2$. We also prove a compactness lemma and a variational characterization of ground states related to the equation. As a result, we obtain the $\dot{H}^{\Gact}$-concentration of blowup solutions with bounded $\dot{H}^{\Gact}$-norm and the limiting profile of blowup solutions with critical $\dot{H}^{\Gact}$-norm. 
\end{abstract}


\section{Introduction}
\setcounter{equation}{0}
Consider the Cauchy problem for the focusing intercritical nonlinear fourth-order Schr\"odinger equation
\begin{align}
\left\{
\begin{array}{rcl}
i\partial_t u - \Delta^2 u &=& -|u|^{\alpha} u, \quad \text{on } [0,\infty) \times \R^d, \\
u(0) &=& u_0, 
\end{array}
\right.
\label{intercritical NL4S}
\end{align}
where $u$ is a complex valued function defined on $[0,\infty) \times \R^d$ and $2_\star<\alpha<2^\star$ with
\begin{align}
2_\star:= \frac{8}{d}, \quad 2^\star:= \left\{
\begin{array}{cl}
\infty &\text{if } d=1,2,3,4, \\
\frac{8}{d-4} &\text{if } d\geq 5.
\end{array}
\right.
\end{align}
\indent The fourth-order Schr\"odinger equation was introduced by Karpman \cite{Karpman} and Karpman-Shagalov \cite{KarpmanShagalov} taking into account the role of small fourth-order dispersion terms in the propagation of intense laser beams in a bulk medium with Kerr nonlinearity. Such fourth-order Schr\"odinger equations are of the form
\begin{align}
i\partial_t  - \Delta^2 u + \eps \Delta u = \mu |u|^\alpha u, \quad u(0) = u_0, \label{general NL4S}
\end{align}
where $\eps \in \{0, \pm 1\}$, $\mu\in \{\pm\}$ and $\alpha>0$. The equation $(\ref{intercritical NL4S})$ is a special case of $(\ref{general NL4S})$ with $\eps =0$ and $\mu=-1$. The study of nonlinear fourth-order Schr\"odinger equations $(\ref{general NL4S})$ has attracted a lot of interest in the past several years (see \cite{Pausader}, \cite{Pausadercubic}, \cite{HaoHsiaoWang06}, \cite{HaoHsiaoWang07}, \cite{HuoJia}, \cite{MiaoXuZhao09}, \cite{MiaoXuZhao11}, \cite{MiaoWuZhang}, \cite{Dinhfourt}, \cite{Dinhblowup} and references therein).\newline
\indent The equation $(\ref{intercritical NL4S})$ enjoys the scaling invariance
\[
u_\lambda(t,x):= \lambda^{\frac{4}{\alpha}} u(\lambda^4 t, \lambda x), \quad \lambda >0.
\]
It means that if $u$ solves $(\ref{intercritical NL4S})$, then $u_\lambda$ solves the same equation with intial data $u_\lambda(0, x) = \lambda^{\frac{4}{\alpha}} u_0(\lambda x)$. A direct computation shows
\[
\|u_\lambda(0)\|_{\dot{H}^\gamma} = \lambda^{\gamma +\frac{4}{\alpha}-\frac{d}{2}}\|u_0\|_{\dot{H}^\gamma}.
\]
From this, we define the critical Sobolev exponent
\begin{align}
\Gact:= \frac{d}{2}-\frac{4}{\alpha}. \label{critical sobolev exponent}
\end{align}
We also define the critical Lebesgue exponent 
\begin{align}
\alphct: = \frac{2d}{d-2\Gact} = \frac{d\alpha}{4}. \label{critical lebesgue exponent}
\end{align}
By Sobolev embedding, we have $\dot{H}^{\Gact} \hookrightarrow L^{\alphct}$.
The local well-posedness for $(\ref{intercritical NL4S})$ in Sobolev spaces was studied in \cite{Dinhfract, Dinhfourt} (see also \cite{Pausader} for $H^2$ initial data). It is known that $(\ref{intercritical NL4S})$ is locally well-posed  in $H^\gamma$ for $\gamma \geq \max\{\Gact,0\}$ satisfying for $\alpha>0$ not an even integer, 
\begin{align}
\lceil \gamma \rceil \leq \alpha+1, \label{regularity condition}
\end{align}
where $\lceil \gamma \rceil$ is the smallest integer greater than or equal to $\gamma$. This condition ensures the nonlinearity to have enough regularity. Moreover, the solution enjoys the conservation of mass 
\[
M(u(t)) = \int |u(t,x)|^2 dx = M(u_0),
\]
and $H^2$ solution has conserved energy
\[
E(u(t)) = \frac{1}{2} \int |\Delta u(t,x)|^2 dx -\frac{1}{\alpha+2} \int |u(t,x)|^{\alpha+2} dx = E(u_0).
\]
In the subcritical regime, i.e. $\gamma>\Gact$, the existence time depends only on the $H^\gamma$-norm of initial data. There is also a blowup alternative: if $T$ is the maximal time of existence, then either $T=\infty$ or 
\[
T<\infty, \quad \lim_{t\uparrow T} \|u(t)\|_{H^\gamma} =\infty.
\]
It is well-known (see e.g. \cite{Pausader}) that if $\Gact <0$ or $0<\alpha<\frac{8}{d}$, then $(\ref{intercritical NL4S})$ is globally well-posed in $H^2$. Thus the blowup in $H^2$ may occur only for $\alpha \geq \frac{8}{d}$. Recently, Boulenger-Lenzmann established in \cite{BoulengerLenzmann} blowup criteria for $(\ref{general NL4S})$ with radial data in $H^2$ in the mass-critical ($\Gact=0$), mass and energy intercritical ($0<\Gact<2$) and enery-critical ($\Gact=2$) cases. This naturally leads to the study of dynamical properties of blowup solutions such as blowup rate, concentration and limiting profile, etc. \newline
\indent In the mass-critical case $\Gact=0$ or $\alpha=\frac{8}{d}$, the study of $H^2$ blowup solutions to $(\ref{intercritical NL4S})$ is closely related to the notion of ground states which are solutions of the elliptic equation
\[
\Delta^2Q + Q - |Q|^{\frac{8}{d}}Q =0. 
\]
Fibich-Ilan-Papanicolaou in \cite{FibichIlanPapanicolaou} showed some numerical observations which implies that if $\|u_0\|_{L^2}<\|Q\|_{L^2}$, then the solution exists globally; and if $\|u_0\|_{L^2} \geq \|Q\|_{L^2}$, then the solution may blow up in finite time. Later, Baruch-Fibich-Mandelbaum in \cite{BaruchFibichMandelbaum} proved some dynamical properties such as blowup rate, $L^2$-concentration for radial blowup solutions. In \cite{ZhuYangZhang10}, Zhu-Yang-Zhang established the profile decomposition and a compactness result to study dynamical properties such as $L^2$-concentration, limiting profile with minimal mass of blowup solutions in general case (i.e. without radially symmetric assumption). For dynamical properties of blowup solutions with low regularity initial data, we refer the reader to \cite{ZhuYangZhang11} and \cite{Dinhblowup}. \newline
\indent In the mass and energy intercritical case $0<\Gact<2$, there are few works concerning dynamical properties of blowup solutions to $(\ref{intercritical NL4S})$. To our knowledge, the only paper addressed this problem belongs to \cite{ZhuYangZhang_pc} where the authors studied $L^{\alphct}$-concentration of radial blowup solutions. We also refer to \cite{BaruchFibich} for numerical study of blowup solutions to the equation. \newline
\indent The main purpose of this paper is to show dynamical properties of blowup solutions to $(\ref{intercritical NL4S})$ with initial data in $\dot{H}^{\Gact} \cap \dot{H}^2$. The main difficulty in this consideration is the lack of conservation of mass. To study dynamics of blowup solutions in $\dot{H}^{\Gact} \cap \dot{H}^2$, we firstly need the local well-posedness. For data in $H^2$, the local well-posedness is well-known (see e.g. \cite{Pausader}). However, for data in $\dot{H}^{\Gact} \cap \dot{H}^2$ the local theory is not a trivial consequence of the one for $H^2$ data due to the lack of mass conservation. We thus need to show a new local theory for our purpose, and it will be done in Section $\ref{section preliminaries}$. It is worth noticing that thanks to Strichartz estimates with a ``gain'' of derivatives, we can remove the regularity requirement $(\ref{regularity condition})$. However, we can only show the local well-posedness in dimensions $d\geq 5$, the one for $d\leq 4$ is still open. After the local theory is established, we need to show the existence of blowup solutions. In \cite{BoulengerLenzmann}, the authors showed blowup criteria for radial $H^2$ solutions to $(\ref{general NL4S})$. In their proof, the conservation of mass plays a crucial role. In our setting, the lack of mass conservation makes the problem more difficult. We are only able to prove a blowup criteria for negative energy radial solutions with an additional condition 
\begin{align}
\sup_{t\in [0,T)} \|u(t)\|_{\dot{H}^{\Gact}} <\infty. \label{bounded introduction}
\end{align}
This condition is also needed in our results for dynamical properties of blowup solutions. We refer to Section $\ref{section global existence blowup}$ for more details. To study blowup dynamics for data in $\dot{H}^{\Gact} \cap \dot{H}^2$, we establish the profile decomposition for bounded sequences in $\dot{H}^{\Gact} \cap \dot{H}^2$. This is done by following the argument of \cite{HmidiKeraani} (see also \cite{Guoblowup}). With the help of this profile decomposition, we study the sharp constant to the Gagliardo-Nirenberg inequality
\begin{align}
\|f\|^{\alpha+2}_{L^{\alpha+2}} \leq A_{\text{GN}} \|f\|^{\alpha}_{\dot{H}^{\Gact}} \|f\|^2_{\dot{H}^2}. \label{sharp gagliardo-nirenberg inequality intro}
\end{align}
It follows (see Proposition $\ref{prop variational structure ground state intercritical}$) that the sharp constant $A_{\text{GN}}$ is attained at a function $U \in \dot{H}^{\Gact} \cap \dot{H}^2$ of the form
\[
U(x) = a Q(\lambda x + x_0),
\]
for some $a \in \C^*, \lambda>0$ and $x_0 \in \R^d$, where $Q$ is a solution to the elliptic equation
\begin{align*}
\Delta^2 Q +  (-\Delta)^{\Gact} Q - |Q|^\alpha Q =0. 
\end{align*}
Moreover, 
\[
A_{\text{GN}} = \frac{\alpha+2}{2} \|Q\|^{-\alpha}_{\dot{H}^{\Gace}}.
\]
The profile decomposition also gives a compactness lemma, that is for any bounded sequence $(v_n)_{n\geq 1}$ in $\dot{H}^{\Gace} \cap \dot{H}^2$ satisfying
\[
\limsup_{n\rightarrow \infty} \|v_n\|_{\dot{H}^2} \leq M, \quad \limsup_{n\rightarrow \infty} \|v_n\|_{L^{\alpha+2}} \geq m,
\]
there exists a sequence $(x_n)_{n\geq 1}$ in $\R^d$ such that up to a subsequence,
\[
v_n(\cdot + x_n) \rightharpoonup V \text{ weakly in } \dot{H}^{\Gace} \cap \dot{H}^2,
\]
for some $V \in \dot{H}^{\Gace} \cap \dot{H}^2$ satisfying
\begin{align*}
\|V\|^\alpha_{\dot{H}^{\Gace}} \geq \frac{2}{\alpha+2} \frac{ m^{\alpha+2}}{M^2} \|Q\|_{\dot{H}^{\Gact}}^\alpha. 
\end{align*}
As a consequence, we show that the $\dot{H}^{\Gact}$-norm of blowup solutions satisfying $(\ref{bounded introduction})$ must concentrate by an amount which is bounded from below by $\|Q\|_{\dot{H}^{\Gact}}$ at the blowup time. Finally, we show the limiting profile of blowup solutions with critical norm 
\[
\sup_{t\in [0,T)} \|u(t)\|_{\dot{H}^{\Gact}} = \|Q\|_{\dot{H}^{\Gact}}. 
\]
The plan of this paper is as follows. In Section $\ref{section preliminaries}$, we give some preliminaries including Strichartz estimates, the local well-posednesss for data in $\dot{H}^{\Gact} \cap \dot{H}^2$ and the profile decomposition of bounded sequences in $\dot{H}^{\Gact} \cap \dot{H}^2$. In Section $\ref{section variational analysis}$, we use the profile decomposition to study the sharp Gagliardo-Nirenberg inequality $(\ref{sharp gagliardo-nirenberg inequality intro})$. The global existence and blowup criteria will be given in Section $\ref{section global existence blowup}$. Section $\ref{section blowup concentration}$ is devoted to the blowup concentration, and finally the limiting profile of blowup solutions with critical norm will be given in Section $\ref{section limiting profile}$.
\section{Preliminaries} \label{section preliminaries}
\setcounter{equation}{0}
\subsection{Homogeneous Sobolev spaces}
We firstly recall the definition and properties of homogeneous Sobolev spaces (see e.g. \cite[Appendix]{GinibreVelo}, \cite[Chapter 5]{Triebel} or \cite[Chapter 6]{BerghLofstom}). Given $\gamma \in \R$ and $1 \leq q \leq \infty$, the generalized homogeneous Sobolev space is defined by
\[
\dot{W}^{\gamma,q} := \left\{ u \in \mathcal{S}_0' \ | \ \|u\|_{\dot{W}^{\gamma,q}} := \||\nabla|^\gamma u\|_{L^q} <\infty \right\},
\]
where $\mathcal{S}_0$ is the subspace of the Schwartz space $\mathcal{S}$ consisting of functions $\phi$ satisfying $D^\beta \hat{\phi}(0) =0$ for all $\beta \in \N^d$ with $\hat{\cdot}$ the Fourier transform on $\mathcal{S}$, and $\mathcal{S}_0'$ is its topology dual space. One can see $\mathcal{S}'_0$ as $\mathcal{S}'/\mathcal{P}$ where $\mathcal{P}$ is the set of all polynomials on $\R^d$. Under these settings, $\dot{W}^{\gamma,q}$ are Banach spaces. Moreover, the space $\mathcal{S}_0$ is dense in $\dot{W}^{\gamma,q}$. In this paper, we shall use $\dot{H}^\gamma:= \dot{W}^{\gamma,2}$. We note that the spaces $\dot{H}^{\gamma_1}$ and $\dot{H}^{\gamma_2}$ cannot be compared for the inclusion. Nevertheless, for $\gamma_1 < \gamma < \gamma_2$, the space $\dot{H}^{\gamma}$ is an interpolation space between $\dot{H}^{\gamma_1}$ and $\dot{H}^{\gamma_2}$.
\subsection{Strichartz estimates}
In this subsection, we recall Strichartz estimates for the fourth-order Schr\"odinger equation. Let $I \subset \R$ and $p,q\in [1,\infty]$. We define the mixed norm 
\[
\|u\|_{L^p(I, L^q)} := \Big( \int_I \Big(\int_{\R^d} |u(t,x)|^q dx \Big)^{\frac{p}{q}} \Big)^{\frac{1}{p}},
\]
with a usual modification when either $p$ or $q$ are infinity. We also denote for $(p,q) \in [1,\infty]^2$,
\begin{align}
\gamma_{p,q} = \frac{d}{2}-\frac{d}{q} -\frac{4}{p}. \label{define gamma pq}
\end{align}
\begin{defi}
	A pair $(p,q)$ is called {\bf Schr\"odinger admissible}, for short $(p,q) \in S$, if 
	\[
	(p,q) \in [2,\infty]^2, \quad (p,q,d) \ne (2,\infty, 2), \quad \frac{2}{p} + \frac{d}{q} \leq \frac{d}{2}.
	\]
	A pair $(p,q)$ is call {\bf biharmonic admissible}, for short $(p, q) \in B$, if 
	\[
	(p,q) \in S, \quad \gamma_{p,q} =0.
	\]
\end{defi}
We have the following Strichartz estimates for the fourth-order Schr\"odinger equation.
\begin{prop} [Strichartz estimates \cite{ChoOzawaXia, Dinhfract}] \label{prop generalized strichartz estimate} Let $\gamma \in \R$ and $u$ be a weak solution to the inhomogeneous fourth-order Schr\"odinger equation, namely
	\begin{align}
	u(t) = e^{it\Delta^2} u_0 + i\int_0^t e^{i(t-s) \Delta^2} F(s) ds, \label{weak solution}
	\end{align}
	for some data $u_0$ and $F$. Then for all $(p,q)$ and $(a,b)$ Schr\"odinger admissible with $q<\infty$ and $b<\infty$,
	\begin{align}
	\||\nabla|^\gamma u\|_{L^p(\R, L^q)} \lesssim \||\nabla|^{\gamma+\gamma_{p,q}} u_0\|_{L^2} + \||\nabla|^{\gamma+\gamma_{p,q}-\gamma_{a',b'}-4} F\|_{L^{a'}(\R, L^{b'})}, \label{generalized strichartz estimate}
	\end{align}
	where $(a,a')$ and $(b,b')$ are conjugate pairs. 
\end{prop}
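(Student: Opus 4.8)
The plan is to reduce the estimate to two classical ingredients for the free biharmonic propagator $e^{it\Delta^2}$ --- a dispersive bound and energy conservation --- and then to upgrade the resulting Keel--Tao estimates to the full Schr\"odinger-admissible range by a Littlewood--Paley argument that is responsible for the gain or loss of derivatives recorded by $\gamma_{p,q}$ and $\gamma_{a',b'}$.

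First I would record the two basic bounds. By Plancherel, $e^{it\Delta^2}$ is unitary on $L^2$, so $\|e^{it\Delta^2}f\|_{L^2}=\|f\|_{L^2}$. For the dispersive bound I would write the kernel as the oscillatory integral $\int e^{i(x\cdot\xi+t|\xi|^4)}\,d\xi$ and, after the rescaling $\xi=|t|^{-1/4}\eta$, reduce matters to the uniform boundedness of $G(y)=\int e^{i(y\cdot\eta\pm|\eta|^4)}\,d\eta$; a standard stationary-phase and van der Corput analysis (splitting near and far from the stationary set) then gives $\|e^{it\Delta^2}f\|_{L^\infty}\lesssim|t|^{-d/4}\|f\|_{L^1}$. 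More useful for the gain of derivatives is the frequency-localized version: localizing to $|\xi|\sim N$ and rescaling to unit frequency, the Hessian of the phase is nondegenerate on the annulus, so stationary phase yields the (second-order type) decay $|t|^{-d/2}$ at unit frequency.

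Next I would run the abstract Strichartz machinery of Keel and Tao on the unit-frequency piece $e^{it\Delta^2}P_1$, whose energy bound and $|t|^{-d/2}$ dispersive decay produce $\|e^{it\Delta^2}P_1 v\|_{L^p(\R,L^q)}\lesssim\|P_1v\|_{L^2}$ for every sharp $\tfrac d2$-admissible pair, i.e.\ for $(p,q)\in S$ with $\tfrac2p+\tfrac dq=\tfrac d2$; the exclusion of $q=\infty$ is the usual endpoint obstruction. Undoing the scaling through the identity $e^{it\Delta^2}[h(N\cdot)](x)=(e^{iN^4t\Delta^2}h)(Nx)$ (space by $N$, time by $N^4$) converts this into the frequency-localized bound $\|e^{it\Delta^2}P_N v\|_{L^p(\R,L^q)}\lesssim N^{\gamma_{p,q}}\|P_Nv\|_{L^2}$, and summing the dyadic pieces through the Littlewood--Paley square function --- legitimate precisely because $q<\infty$ and $p<\infty$ --- gives the homogeneous estimate $\||\nabla|^\gamma e^{it\Delta^2}u_0\|_{L^p(\R,L^q)}\lesssim\||\nabla|^{\gamma+\gamma_{p,q}}u_0\|_{L^2}$ on the sharp line. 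The remaining Schr\"odinger-admissible pairs (those with strict inequality $\tfrac2p+\tfrac dq<\tfrac d2$) are then reached from the sharp ones by the Sobolev embedding $\dot W^{s,q_0}\hookrightarrow L^q$ in the space variable, which shifts $\gamma_{p,q}$ by exactly $s=\tfrac d{q_0}-\tfrac dq\ge0$ and accounts for the derivative loss in that regime.

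Finally, for the Duhamel term I would apply the homogeneous estimate together with its dual (a $TT^*$ pairing of the output pair $(p,q)$ against the input pair $(a,b)$), and invoke the Christ--Kiselev lemma to pass from the full time integral to the retarded one $\int_0^t$; this is valid in the stated non-endpoint range, where $p>a'$. The derivative count then combines $\gamma_{p,q}$ from the output propagator, $-\gamma_{a',b'}$ from the dual (input) propagator, and an extra $-4$ coming from the single time integration of the fourth-order flow. A clean way to pin down, and to double-check, the exponent $\gamma+\gamma_{p,q}-\gamma_{a',b'}-4$ is the scaling $u\mapsto u(\lambda^4t,\lambda x)$, $F\mapsto\lambda^4F(\lambda^4t,\lambda x)$, under which every term transforms with the same power of $\lambda$; I would carry out this dimensional analysis to confirm the shift. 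The main obstacle is the frequency-localized step: extracting the sharp power $N^{\gamma_{p,q}}$ --- the genuine smoothing and gain of derivatives that lets one dispense with the regularity requirement $(\ref{regularity condition})$ --- and summing the Littlewood--Paley pieces, rather than the comparatively routine dispersive and $TT^*$ inputs.
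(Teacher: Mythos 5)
The paper does not prove this proposition itself; it cites \cite{ChoOzawaXia} and \cite{Dinhfract} and only remarks that the proof proceeds by ``the scaling technique similar to the one of wave equation'' rather than by the direct dispersive estimate of \cite{Ben-ArtziKochSaut} --- which is precisely the frequency-localized Keel--Tao plus dyadic rescaling argument you reconstruct, so your proposal matches the intended approach and the derivative count $\gamma+\gamma_{p,q}-\gamma_{a',b'}-4$ is verified correctly. The only point to tidy is the double endpoint $p=a'=2$ (allowed by the statement when $d\geq 3$), where Christ--Kiselev is unavailable and one must instead invoke the retarded endpoint estimate from the Keel--Tao bilinear argument directly.
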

Note that the estimates $(\ref{generalized strichartz estimate})$ are exactly the ones given in \cite{MiaoZhang} or \cite{Pausader} where the authors considered $(p,q)$ and $(a,b)$ are either sharp Schr\"odinger admissible, i.e.
\[
(p,q) \in [2,\infty]^2, \quad (p,q,d) \ne (2, \infty, 2), \quad \frac{2}{p} +\frac{d}{q} =\frac{d}{2},
\]
or biharmonic admissible. We refer to \cite{ChoOzawaXia} or \cite{Dinhfract} for the proof of Propsosition $\ref{prop generalized strichartz estimate}$. Note that instead of using directly a dedicate dispersive estimate of \cite{Ben-ArtziKochSaut} for the fundamental solution of the homogeneous fourth-order Schr\"odinger equation, one uses the scaling technique which is similar to the one of wave equation (see e.g. \cite{KeelTao}). \newline
\indent We also have the following consequence of Strichartz estimates $(\ref{generalized strichartz estimate})$. 
\begin{coro} \label{coro strichartz estimate}
	Let $\gamma \in \R$ and $u$ be a weak solution to the inhomogeneous fourth-order Schr\"odinger equation $(\ref{weak solution})$ for some data $u_0$ and $F$. Then for all $(p,q)$ and $(a,b)$ biharmonic admissible satisfying $q<\infty$ and $b<\infty$,
	\begin{align}
	\|u\|_{L^p(\R, L^q)} \lesssim \|u_0\|_{L^2} + \|F\|_{L^{a'}(\R, L^{b'})}, \label{strichartz estimate L2}
	\end{align}
	and
	\begin{align}
	\||\nabla|^\gamma u\|_{L^p(\R, L^q)} \lesssim \||\nabla|^\gamma u_0\|_{L^2} + \||\nabla|^{\gamma-1} F\|_{L^{2}(\R, L^{\frac{2d}{d+2}})}, \label{strichartz estimate Hgamma}
	\end{align}
\end{coro}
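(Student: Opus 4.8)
The plan is to obtain both estimates as immediate specializations of the generalized Strichartz estimate \eqref{generalized strichartz estimate}; the entire content is the bookkeeping of the derivative index $\gamma_{p,q}$ from \eqref{define gamma pq} under the admissibility conditions and under conjugation.

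For \eqref{strichartz estimate L2} I would apply \eqref{generalized strichartz estimate} with $\gamma = 0$ and with both $(p,q)$ and $(a,b)$ biharmonic admissible, so that $\gamma_{p,q} = \gamma_{a,b} = 0$ by definition. The homogeneous term then reduces to $\|u_0\|_{L^2}$, and it remains only to show that the derivative order $-\gamma_{a',b'} - 4$ on $F$ vanishes. Using $\tfrac{1}{a} + \tfrac{1}{a'} = 1$ and $\tfrac{1}{b} + \tfrac{1}{b'} = 1$ in \eqref{define gamma pq} gives
\begin{align*}
\gamma_{a',b'} = \frac{d}{2} - \frac{d}{b'} - \frac{4}{a'} = -\frac{d}{2} - 4 + \left( \frac{d}{b} + \frac{4}{a} \right),
\end{align*}
and since $\gamma_{a,b} = 0$ forces $\tfrac{d}{b} + \tfrac{4}{a} = \tfrac{d}{2}$, we get $\gamma_{a',b'} = -4$. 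Hence $-\gamma_{a',b'} - 4 = 0$ and the $F$-term is exactly $\|F\|_{L^{a'}(\R, L^{b'})}$, which is \eqref{strichartz estimate L2}.

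For \eqref{strichartz estimate Hgamma} the pair $(a,b)$ named in the hypothesis is irrelevant, since the $F$-norm is prescribed; I am free to invoke \eqref{generalized strichartz estimate} with whatever Schr\"odinger admissible pair I please. I would take the sharp Schr\"odinger admissible pair $(a,b) = (2, \tfrac{2d}{d-2})$, so that $a' = 2$ and $b' = \tfrac{2d}{d+2}$, applied at regularity level $\gamma$ with $(p,q)$ biharmonic admissible. Again $\gamma_{p,q} = 0$, so the homogeneous term is $\||\nabla|^\gamma u_0\|_{L^2}$. A direct substitution in \eqref{define gamma pq} gives $\gamma_{a',b'} = \tfrac{d}{2} - \tfrac{d+2}{2} - 2 = -3$, whence the derivative order on $F$ is $\gamma + \gamma_{p,q} - \gamma_{a',b'} - 4 = \gamma - 1$ and the $F$-norm is precisely $\||\nabla|^{\gamma - 1} F\|_{L^2(\R, L^{\frac{2d}{d+2}})}$, which is \eqref{strichartz estimate Hgamma}.

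The only step demanding any care is checking that the pairs I feed into Proposition \ref{prop generalized strichartz estimate} meet its hypotheses. For \eqref{strichartz estimate Hgamma} one verifies $\tfrac{2}{a} + \tfrac{d}{b} = 1 + \tfrac{d-2}{2} = \tfrac{d}{2}$, so $(2, \tfrac{2d}{d-2})$ is indeed sharp Schr\"odinger admissible, with $b = \tfrac{2d}{d-2} < \infty$ for $d \geq 3$ as the proposition requires, while $q < \infty$ holds by the biharmonic admissibility hypothesis on $(p,q)$. Beyond this index bookkeeping I do not expect any genuine obstacle: no harmonic-analysis input is needed here, as all the analytic content is already contained in \eqref{generalized strichartz estimate}.
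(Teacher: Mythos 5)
Your proposal is correct and is exactly the intended argument: the paper states Corollary \ref{coro strichartz estimate} without proof as a direct consequence of Proposition \ref{prop generalized strichartz estimate}, and your index bookkeeping ($\gamma_{a',b'}=-4$ for $(a,b)$ biharmonic admissible, and $\gamma_{a',b'}=-3$ for the dual of the sharp pair $(2,\tfrac{2d}{d-2})$) supplies precisely the verification that is left implicit. Your observation that \eqref{strichartz estimate Hgamma} requires $d\geq 3$ for the pair $(2,\tfrac{2d}{d-2})$ to be admissible is a fair caveat, harmless here since the estimate is only invoked in the local theory for $d\geq 5$.
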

Note that the estimates $(\ref{strichartz estimate Hgamma})$ is important to reduce the regularity requirement of the nonlinearity (see Subsection $\ref{subsection local well posedness}$). \newline
\indent In the sequel, for a space time slab $I\times \R^d$ we define the Strichartz space $\dot{B}^0(I\times \R^d)$ as a closure of $\mathcal{S}_0$ under the norm
\[
\|u\|_{\dot{B}^0(I\times \R^d)}:= \sup_{(p,q) \in B \atop q<\infty} \|u\|_{L^p(I, L^q)}.
\]
For $\gamma \in \R$, the space $\dot{B}^\gamma(I \times \R^d)$ is defined as a closure of $\mathcal{S}_0$ under the norm 
\[
\|u\|_{\dot{B}^\gamma(I\times \R^d)} := \||\nabla|^\gamma u\|_{\dot{B}^0(I \times \R^d)}.
\] 
We also use $\dot{N}^0(I\times \R^d)$  to denote the dual space of $\dot{B}^0(I\times \R^d)$ and 
\[
\dot{N}^\gamma(I \times \R^d):= \left\{u \ : \ |\nabla|^\gamma u \in \dot{N}^0(I \times \R^d)\right\}.
\]
To simplify the notation, we will use $\dot{B}^\gamma(I), \dot{N}^\gamma(I)$ instead of $\dot{B}^\gamma(I\times \R^d)$ and $\dot{N}^\gamma(I \times \R^d)$. By Corollary $\ref{coro strichartz estimate}$, we have
\begin{align}
\|u\|_{\dot{B}^0(\R)} &\lesssim \|u_0\|_{L^2} + \|F\|_{\dot{N}^0(\R)}, \label{strichartz estimate L2 abstract} \\
\|u\|_{\dot{B}^\gamma(\R)} &\lesssim \|u_0\|_{\dot{H}^\gamma} + \||\nabla|^{\gamma-1} F\|_{L^2(\R, L^{\frac{2d}{d+2}})}. \label{strichartz estimate Hgamma abstract}
\end{align}
\subsection{Nonlinear estimates} 
We next recall nonlinear estimates to study the local well-posedness for $(\ref{intercritical NL4S})$. 
\begin{lem}[Nonlinear estimates \cite{Kato}] \label{lem nonlinear estimate}
	Let $F \in C^k(\C, \C)$ with $k \in \N \backslash \{0\}$. Assume that there is $\alpha>0$ such that $k \leq \alpha+1$ and
	\[
	|D^j F(z)| \lesssim |z|^{\alpha+1-j}, \quad z \in \C, j =1, \cdots, k.
	\]
	Then for $\gamma \in [0,k]$ and $1<r, p<\infty$, $1<q \leq \infty$ satisfying $\frac{1}{r} = \frac{1}{p} +\frac{\alpha}{q}$, there exists $C=C(d,\alpha, \gamma, r, p, q)>0$ such that for all $u \in \mathcal{S}$,
	\begin{align}
	\||\nabla|^\gamma F(u)\|_{L^r} \leq C \|u\|^\alpha_{L^q} \||\nabla|^\gamma u\|_{L^p}. \label{nonlinear estimate}
	\end{align}
	Moreover, if $F$ is a homogeneous polynomial in $u$ and $\overline{u}$, then $(\ref{nonlinear estimate})$ holds true for any $\gamma\geq 0$.
\end{lem}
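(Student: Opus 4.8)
The plan is to separate the integer and fractional parts of $\gamma$ and treat them with two standard harmonic-analytic tools: the fractional chain rule of Christ--Weinstein and the fractional Leibniz rule of Kato--Ponce, everything else being Hölder's inequality and Gagliardo--Nirenberg interpolation. The hypothesis $|D^jF(z)| \lesssim |z|^{\alpha+1-j}$ is exactly what turns derivatives falling on $F(u)$ into powers of $|u|$, while the relation $\frac1r = \frac1p + \frac\alpha q$ is forced by scaling: under $u \mapsto u(\lambda\,\cdot)$ both sides of $(\ref{nonlinear estimate})$ carry the homogeneity $\lambda^{\gamma - d/r}$ precisely when this relation holds, so it is the only admissible balance and will be used repeatedly to solve for intermediate exponents. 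Note also that $\frac1r<1$ together with $\frac1r=\frac\alpha q+\frac1p$ forces $q>\alpha$, so that $\frac{q}{\alpha}>1$ throughout.

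First I would dispose of the endpoint $\gamma=0$: integrating $DF$ along $[0,z]$ and using $F(0)=0$ gives $|F(z)|\lesssim|z|^{\alpha+1}$, so $\|F(u)\|_{L^r}\lesssim\big\||u|^{\alpha+1}\big\|_{L^r}$, and Hölder with $\frac1r=\frac\alpha q+\frac1p$ returns $\|u\|_{L^q}^\alpha\|u\|_{L^p}$. For $0<\gamma<1$ I would invoke the Christ--Weinstein fractional chain rule
\[
\||\nabla|^\gamma F(u)\|_{L^r} \lesssim \|F'(u)\|_{L^{q/\alpha}}\,\||\nabla|^\gamma u\|_{L^p},
\]
valid since $1<r,p<\infty$ and $1<\tfrac{q}{\alpha}\le\infty$ with $\frac1r=\frac{1}{q/\alpha}+\frac1p$; the bound $|F'(u)|\lesssim|u|^\alpha$ then gives $\|F'(u)\|_{L^{q/\alpha}}\lesssim\|u\|_{L^q}^\alpha$, which is $(\ref{nonlinear estimate})$.

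For a general $\gamma\in[1,k]$ I would write $\gamma=m+\sigma$ with $m=\lfloor\gamma\rfloor\in\{1,\dots,k\}$ and $\sigma\in[0,1)$, and expand $\nabla^m F(u)$ by the Fa\`a di Bruno/generalized Leibniz formula into a finite sum of terms $D^jF(u)\prod_{i=1}^j\nabla^{\beta_i}u$ with $\beta_i\ge1$ and $\sum_i\beta_i=m$. Such a term has total $u$-degree $(\alpha+1-j)+j=\alpha+1$ and total derivative order $m$. Applying $|\nabla|^\sigma$ and distributing it by the Kato--Ponce fractional Leibniz rule, the derivative either lands on $D^jF(u)$---handled by the fractional chain rule and $|D^{j+1}F|\lesssim|u|^{\alpha-j}$, which needs $D^{j+1}F$ only up to order $m+1\le k$ (here $\sigma>0$ forces $m\le k-1$, while $\gamma=k$ gives $\sigma=0$ and needs no extra derivative)---or it raises some $\nabla^{\beta_i}u$ to order $\beta_i+\sigma$. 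Each resulting factor is then estimated by a Gagliardo--Nirenberg interpolation inequality between $\|u\|_{L^q}$ and $\||\nabla|^\gamma u\|_{L^p}$, and collecting the factors by Hölder reproduces exactly $\|u\|_{L^q}^\alpha\||\nabla|^\gamma u\|_{L^p}$.

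I expect the main obstacle to be the exponent bookkeeping in this last step: for each term of the Leibniz expansion one must verify that the intermediate Lebesgue exponents demanded by the Gagliardo--Nirenberg inequalities lie in $(1,\infty)$ and that the homogeneities add up correctly, so that the interpolation weights sum to one full $\gamma$-derivative and $\alpha$ copies of $\|u\|_{L^q}$. This is precisely where the constraint $\gamma\le k$ (keeping all needed derivatives of $F$ within $C^k$) and the Hölder relation $\frac1r=\frac1p+\frac\alpha q$ are genuinely used; the scaling computation above guarantees the relevant linear systems for the exponents are consistent. Finally, for the \emph{moreover} statement, if $F$ is a homogeneous polynomial in $u$ and $\overline u$ then $\alpha+1\in\N$, $F\in C^\infty$, and $D^jF\equiv0$ for $j>\alpha+1$; hence the hypotheses hold with $k$ arbitrarily large, the restriction $\gamma\le k$ becomes vacuous, and $(\ref{nonlinear estimate})$ extends to every $\gamma\ge0$.
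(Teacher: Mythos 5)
Your proposal is correct and follows essentially the same route as the paper, which does not prove this lemma at all but merely notes that the proof ``is based on the fractional Leibniz rule (or Kato--Ponce inequality) and the fractional chain rule'' and defers to the appendix of \cite{Kato}; your sketch is precisely that standard argument (Christ--Weinstein chain rule for $0<\gamma<1$, Fa\`a di Bruno plus Kato--Ponce plus Gagliardo--Nirenberg interpolation for $\gamma=m+\sigma$, with the scaling relation $\frac1r=\frac1p+\frac\alpha q$ fixing all intermediate exponents). The only points worth flagging are that $F(0)=0$ must be assumed (as you note, it is implicit for nonlinearities of the form $|u|^\alpha u$) and that in the \emph{moreover} part the real reason the restriction $\gamma\le k\le\alpha+1$ disappears is that for a homogeneous polynomial the derivatives $D^jF$ with $j>\alpha+1$ vanish identically, so the negative powers $|z|^{\alpha+1-j}$ that this restriction is designed to exclude never actually occur in the expansion.
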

The proof of Lemma $\ref{lem nonlinear estimate}$ is based on the fractional Leibniz rule (or Kato-Ponce inequality) and the fractional chain rule. We refer the reader to \cite[Appendix]{Kato} for the proof.
\subsection{Local well-posedness} \label{subsection local well posedness}
In this subsection, we recall the local well-posedness for $(\ref{intercritical NL4S})$ with initial data in $H^2$ and in $\dot{H}^{\Gact} \cap \dot{H}^2$ respectively. The case in $H^2$ is well-known (see e.g. \cite{Pausader}), while the one in $\dot{H}^{\Gact} \cap \dot{H}^2$ needs a careful consideration. 
\begin{prop} [Local well-posedness in $H^2$ \cite{Pausader}] \label{prop local well-posedness H2}
	Let $d\geq 1$, $u_0 \in H^2$ and $0<\alpha<2^\star$. Then there exist $T>0$ and a unique solution $u$ to $(\ref{intercritical NL4S})$ satisfying
	\[ 
	u \in C([0,T), H^2) \cap L^p_{\emph{loc}}([0,T), W^{2,q}),
	\]
	for any biharmonic admissible pairs $(p,q)$ satisfying $q<\infty$. The time of existence satisfies either $T=\infty$ or $T<\infty$ and $\lim_{t\uparrow T} \|u\|_{\dot{H}^2} =\infty$. Moreover, the solution enjoys the conservation of mass and energy.
\end{prop}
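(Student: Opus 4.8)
The plan is to recast $(\ref{intercritical NL4S})$ as the Duhamel integral equation of the form $(\ref{weak solution})$ with $F = |u|^\alpha u$, namely
\[
\Phi(u)(t) = e^{it\Delta^2} u_0 + i \int_0^t e^{i(t-s)\Delta^2} \big( |u|^\alpha u \big)(s)\, ds,
\]
and to solve $u = \Phi(u)$ by a fixed-point argument à la Kato--Cazenave--Weissler adapted to the biharmonic setting. For a small interval $I = [0,T]$ I would work on the complete metric space
\[
X = \Big\{ u \in C(I, H^2) \cap L^p(I, W^{2,q}) \ : \ \|u\|_X \leq M \Big\},
\]
where $M \sim \|u_0\|_{H^2}$, the norm $\|\cdot\|_X$ is the supremum over biharmonic admissible pairs $(p,q)$ with $q<\infty$ of the $L^p(I, W^{2,q})$-norms, and the metric is induced by the weaker $L^p(I,L^q)$-distance so that $X$ is complete. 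The goal is to show that for $T$ small enough, depending only on $\|u_0\|_{H^2}$, the map $\Phi$ leaves $X$ invariant and contracts.

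The key tools are the Strichartz bounds of Corollary $\ref{coro strichartz estimate}$ together with the nonlinear estimate of Lemma $\ref{lem nonlinear estimate}$. The $L^2$-level part of $\|\Phi(u)\|_X$ is handled by $(\ref{strichartz estimate L2})$, while the $\dot{H}^2$-level part is handled by $(\ref{strichartz estimate Hgamma})$ with $\gamma = 2$, which reduces matters to bounding
\[
\big\| |\nabla| \big( |u|^\alpha u \big) \big\|_{L^2(I, L^{\frac{2d}{d+2}})}.
\]
This is the crucial point: the one-derivative ``gain'' in $(\ref{strichartz estimate Hgamma})$ means that only $\gamma - 1 = 1$ derivative, rather than two, falls on the nonlinearity. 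Applying Lemma $\ref{lem nonlinear estimate}$ with its $\gamma$ equal to $1$, a suitable choice of Hölder exponents satisfying the relation $\tfrac1r = \tfrac1p + \tfrac{\alpha}{q}$, Sobolev embedding, and Hölder in time then yields a bound of the form $T^\theta \|u\|_X^{\alpha+1}$ for some $\theta>0$. The positive power $T^\theta$ is exactly what lets me absorb the nonlinear contribution and close both the self-mapping and the contraction estimates by taking $T$ small. The difference estimate for $\Phi(u)-\Phi(v)$ is handled identically, using the pointwise bound $\big| |u|^\alpha u - |v|^\alpha v \big| \lesssim \big( |u|^\alpha + |v|^\alpha \big)|u-v|$.

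Banach's fixed-point theorem then produces a unique $u \in X$ solving the integral equation, and a standard bootstrap through the Strichartz estimates upgrades this to $u \in C(I, H^2) \cap L^p_{\mathrm{loc}}(I, W^{2,q})$ for every biharmonic admissible $(p,q)$ with $q<\infty$; uniqueness in the whole class follows by re-running the contraction estimate on arbitrary subintervals. Since the construction yields an existence time $T$ depending only on $\|u_0\|_{H^2}$, the solution extends as long as $\|u(t)\|_{H^2}$, equivalently $\|u(t)\|_{\dot{H}^2}$ in view of mass conservation, stays bounded, which gives the blowup alternative. Finally, the conservation laws follow from the usual formal identities made rigorous by the $H^2$-regularity: pairing the equation with $\overline{u}$ and taking imaginary parts gives $\frac{d}{dt} M(u(t)) = 0$, while pairing with $\overline{\partial_t u}$ and taking real parts gives $\frac{d}{dt} E(u(t)) = 0$.

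The main obstacle I expect is the nonlinear estimate when $\alpha$ is not an even integer, so that $z \mapsto |z|^\alpha z$ fails to be smooth. Controlling $|\nabla|\big(|u|^\alpha u\big)$ then relies on the fractional chain rule underlying Lemma $\ref{lem nonlinear estimate}$, and it is precisely the one-derivative gain in $(\ref{strichartz estimate Hgamma})$ that keeps the number of derivatives on the nonlinearity at $\gamma - 1 = 1 \leq \alpha+1$, thereby avoiding the stronger smoothness (equivalently $\alpha \geq 1$) that placing two derivatives on $|u|^\alpha u$ would otherwise require.
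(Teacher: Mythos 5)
The paper does not actually prove this proposition: it is quoted from \cite{Pausader}, and the only argument of this type carried out in the text is the proof of Proposition \ref{prop local well-posedness H dot 2} for data in $\dot{H}^{\Gact}\cap\dot{H}^2$, which your sketch mirrors closely (contraction in a Strichartz space, $L^2$-level distance, the one-derivative gain of $(\ref{strichartz estimate Hgamma})$ to keep the fractional chain rule at order one). For $d\geq 3$ your scheme is sound and, with the $L^2$ component of the $H^2$ norm giving enough Sobolev flexibility, the exponent bookkeeping can be closed for all $0<\alpha<2^\star$; for $d\geq 5$ it is essentially the paper's own proof of Proposition \ref{prop local well-posedness H dot 2} with mass added.

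The genuine gap is the range $d=1,2$, which the statement claims but your argument cannot reach. The estimate $(\ref{strichartz estimate Hgamma})$ is $(\ref{generalized strichartz estimate})$ specialized to the dual pair $(a',b')=\bigl(2,\tfrac{2d}{d+2}\bigr)$, i.e. $(a,b)=\bigl(2,\tfrac{2d}{d-2}\bigr)$; for $d=2$ this is the forbidden pair $(2,\infty,2)$ (and is excluded by the requirement $b<\infty$ in Corollary \ref{coro strichartz estimate}), and for $d=1$ the exponent $\tfrac{2d}{d-2}$ is negative. So the one-derivative gain on which your whole treatment of the nonlinearity rests is simply unavailable in low dimensions, and you would be forced back to placing two derivatives on $|u|^\alpha u$ (say via an $L^1_tL^2_x$ dual norm), which through Lemma \ref{lem nonlinear estimate} reinstates the smoothness restriction $\lceil 2\rceil\leq\alpha+1$, i.e. $\alpha\geq 1$, for non--even-integer $\alpha$ --- precisely the condition $(\ref{regularity condition})$ the introduction warns about. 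Your proof therefore covers $d\geq 3$ but not the full claim $d\geq 1$, $0<\alpha<2^\star$; note that the paper itself only ever uses this proposition alongside results requiring $d\geq 5$, and its own homogeneous-space analogue is likewise restricted to $d\geq 5$. A second, smaller point: the energy conservation cannot be obtained by literally pairing the equation with $\overline{\partial_t u}$, since $\partial_t u$ only lies in $H^{-2}$ for an $H^2$ solution; this step needs the usual approximation by regularized data, as the paper indicates at the end of the proof of Proposition \ref{prop local well-posedness H dot 2}.
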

\begin{prop}[Local well-posedness in $\dot{H}^{\Gace} \cap \dot{H}^2$] \label{prop local well-posedness H dot 2}
	Let $d\geq 5$, $2_\star \leq \alpha<2^\star$ and $u_0 \in \dot{H}^{\Gace} \cap \dot{H}^2$. Then there exist $T>0$ and a unique solution $u$ to $(\ref{intercritical NL4S})$ satisfying
	\[
	u \in C([0,T), \dot{H}^{\Gace} \cap \dot{H}^2) \cap L^p_{\emph{loc}}([0,T), \dot{W}^{\Gace, q} \cap \dot{W}^{2,q}),
	\]
	for any biharmonic admissible pairs $(p,q)$ satisfying $q<\infty$. The existence time satisfies either $T=\infty$ or $T<\infty$ and $\lim_{t\uparrow T} \|u(t)\|_{\dot{H}^{\Gace}}+\|u(t)\|_{\dot{H}^2} =\infty$. Moreover, the solution enjoys the conservation of energy. 
\end{prop}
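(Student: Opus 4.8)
The plan is to realize the solution as a fixed point of the Duhamel map. Writing $F(u) = |u|^\alpha u$ (the focusing sign is immaterial for the local theory), I set
\begin{equation*}
\Phi(u)(t) = e^{it\Delta^2} u_0 + i\int_0^t e^{i(t-s)\Delta^2} F(u)(s)\, ds,
\end{equation*}
and run a contraction argument on a small time interval $[0,T]$. The natural working space is a closed ball
\begin{equation*}
X(T,M) := \Big\{ u : \|u\|_{\dot B^{\Gact}([0,T])} + \|u\|_{\dot B^2([0,T])} \le M \Big\},
\end{equation*}
where $M$ is comparable to $\|u_0\|_{\dot H^{\Gact}} + \|u_0\|_{\dot H^2}$, metrized by the weaker distance $d(u,v) = \|u-v\|_{\dot B^{\Gact}([0,T])}$ (the ball being complete for this metric by a standard weak lower-semicontinuity argument). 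The linear term is controlled immediately by Corollary $\ref{coro strichartz estimate}$, giving $\|e^{it\Delta^2}u_0\|_{\dot B^{\Gact}([0,T]) \cap \dot B^2([0,T])} \lesssim \|u_0\|_{\dot H^{\Gact}} + \|u_0\|_{\dot H^2}$.

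The crux is the nonlinear estimate. I would apply the gain-of-derivative estimate $(\ref{strichartz estimate Hgamma abstract})$ with $\gamma = 2$ and $\gamma = \Gact$ to bound $\|\Phi(u)\|_{\dot B^\gamma([0,T])}$ by $\|u_0\|_{\dot H^\gamma}$ plus $\||\nabla|^{\gamma-1}F(u)\|_{L^2([0,T],L^{\frac{2d}{d+2}})}$, routing the case $\Gact<1$ through the more flexible estimate $(\ref{generalized strichartz estimate})$ so that the number of derivatives landing on $F$ stays nonnegative. The decisive point is that only $\gamma-1 \le 1$ derivatives fall on the nonlinearity, so Kato's estimate (Lemma $\ref{lem nonlinear estimate}$) applies with $k=1 \le \alpha+1$; this is precisely why the regularity constraint $(\ref{regularity condition})$ can be dropped. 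I would then estimate the spatial norm by Lemma $\ref{lem nonlinear estimate}$ and the time norm by Hölder, absorbing the $\|u\|^\alpha$ factor through the Sobolev embedding $\dot H^{\Gact} \hookrightarrow L^{\alphct}$. Since the problem is strictly subcritical at the $\dot H^2$ level ($2 > \Gact$), the Hölder step leaves a positive power of $T$, yielding $\|\Phi(u)\|_{X(T)} \lesssim \|u_0\|_{\dot H^{\Gact}\cap\dot H^2} + T^\theta M^{\alpha+1}$ for some $\theta>0$.

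For the contraction I would estimate $\Phi(u)-\Phi(v)$ in the weaker $\dot B^{\Gact}$ metric, using $|F(u)-F(v)| \lesssim (|u|^\alpha + |v|^\alpha)|u-v|$ and the same Strichartz--Kato--Hölder scheme to again extract $T^\theta$. Choosing $M$ of the size of the data norm and $T$ small then makes $\Phi$ a contraction self-map of $X(T,M)$, and the Banach fixed point theorem produces the unique solution $u \in X(T)$; persistence of the $C([0,T),\dot H^{\Gact}\cap\dot H^2)$ regularity and continuity in time follow from reinserting $u$ into the Strichartz estimates. Iterating the local construction yields the maximal time $T$ and the blowup alternative, i.e.\ $\lim_{t\uparrow T}(\|u(t)\|_{\dot H^{\Gact}} + \|u(t)\|_{\dot H^2}) = \infty$ when $T<\infty$. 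Energy conservation I would obtain by approximating $u_0$ by smooth data, invoking the $H^2$ theory (Proposition $\ref{prop local well-posedness H2}$) where mass and energy are conserved, and passing to the limit by continuous dependence; finiteness of $E(u)$ is guaranteed by $\|u\|_{\dot H^2}^2$ together with the Gagliardo--Nirenberg bound $(\ref{sharp gagliardo-nirenberg inequality intro})$, so no mass is needed.

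The hard part will be the simultaneous admissibility bookkeeping: one must select biharmonic (or Schr\"odinger) admissible exponents so that Kato's fractional estimate and the Strichartz admissibility conditions hold at once while still leaving a strictly positive power of $T$. This is exactly where the hypothesis $d\ge 5$ enters, since the window of compatible Lebesgue exponents—tied to the embedding $\dot H^{\Gact}\hookrightarrow L^{\alphct}$ and to the dual integrability $L^{\frac{2d}{d+2}}$ appearing in $(\ref{strichartz estimate Hgamma abstract})$—is nonempty only in that range. For $d\le 4$ one has $2^\star=\infty$, so $\alpha$ may be taken arbitrarily large and the relevant endpoint structure degenerates, which is the reason the low-dimensional case remains open.
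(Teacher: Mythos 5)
Your architecture coincides with the paper's: Duhamel formulation, a contraction on a ball of $\dot{B}^{\Gact}(I)\cap\dot{B}^2(I)$ with $M\sim\|u_0\|_{\dot{H}^{\Gact}\cap\dot{H}^2}$, the gain-of-derivative estimate $(\ref{strichartz estimate Hgamma abstract})$ so that at most one derivative lands on the nonlinearity (which is exactly how the constraint $(\ref{regularity condition})$ is avoided), Kato's Lemma $\ref{lem nonlinear estimate}$ plus Sobolev embedding and H\"older in time to extract $|I|^\theta$ with $\theta=1-\frac{(d-4)\alpha}{8}>0$, and energy conservation by approximation through the $H^2$ theory. The paper likewise splits the $\dot{B}^{\Gact}$ bound into the cases $\Gact\geq 1$ and $0<\Gact<1$, treating the latter with an $L^2$-level Strichartz estimate and a suitable biharmonic admissible pair, which matches your description; and the exponents $n$, $n^*$, $m^*$ it chooses are indeed where $d\geq 5$ enters.

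The one step that would fail as written is the contraction. You metrize the ball by $d(u,v)=\|u-v\|_{\dot{B}^{\Gact}([0,T])}$ yet propose to close the difference estimate using only the pointwise bound $|F(u)-F(v)|\lesssim(|u|^\alpha+|v|^\alpha)|u-v|$. For $\Gact>0$ these two choices are incompatible: the pointwise Lipschitz bound controls $\|F(u)-F(v)\|_{L^{q'}}$, a zero-derivative norm of the difference, whereas a $\dot{B}^{\Gact}$ contraction requires $\||\nabla|^{\Gact}(F(u)-F(v))\|$, and no fractional chain rule for differences of the form $|u|^\alpha u-|v|^\alpha v$ is available for general $\alpha$ (in particular $\alpha$ may be smaller than $1$ here, since $2_\star=8/d<1$ for $d\geq 9$). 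The paper resolves this in the standard way by taking the contraction metric to be $d(u,v)=\|u-v\|_{\dot{B}^0(I)}$, for which the pointwise bound together with H\"older and the Strichartz estimate $(\ref{strichartz estimate L2 abstract})$ suffices, the fixed point then being unique in the full space a posteriori. Replacing your metric by the $\dot{B}^0$ one repairs the argument without changing anything else.
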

\begin{rem} \label{rem local well-posedness H dot 2}
	\begin{itemize}
		\item When $\Gact=0$ or $\alpha=2_\star$, Proposition $\ref{prop local well-posedness H dot 2}$ is a consequence of Proposition $\ref{prop local well-posedness H2}$ since $\dot{H}^0 = L^2$ and $L^2 \cap \dot{H}^2= H^2$.
		\item In \cite{Dinhfourt}, a similar result holds with an additional regularity assumption $\alpha \geq 1$ if $\alpha$ is not an even integer. Thanks to Strichartz estimate with a ``gain'' of derivatives $(\ref{strichartz estimate Hgamma abstract})$, we can remove this regularity requirement.
	\end{itemize}
\end{rem}
\noindent \textit{Proof of Proposition $\ref{prop local well-posedness H dot 2}$.} We firstly choose
\[
n=\frac{2d}{d+2 - (d-4)\alpha}, \quad n^* = \frac{2d}{d+4-(d-4)\alpha}, \quad m^* = \frac{8}{(d-4)\alpha-4}.
\]
It is easy to check that
\begin{align}
\frac{d+2}{2d}  = \frac{(d-4)\alpha}{2d} + \frac{1}{n}, \quad \frac{1}{n} = \frac{1}{n^*} -\frac{1}{d}, \quad \frac{d}{2} = \frac{4}{m^*} + \frac{d}{n^*}. \label{choice of nmm}
\end{align}
In particular, $(m^*, n^*)$ is a biharmonic admissible and 
\begin{align}
\theta:= \frac{1}{2}-\frac{1}{m^*}=1-\frac{(d-4)\alpha}{8}>0. \label{choice of theta}
\end{align}
Consider 
\[
X:= \left\{u \in  \dot{B}^{\Gact}(I) \cap \dot{B}^2(I) \ : \ \|u\|_{\dot{B}^{\Gact}(I)} + \|u\|_{\dot{B}^2(I)} \leq M \right\},
\]
equipped with the distance
\[
d(u,v) := \|u-v\|_{\dot{B}^0(I)},
\]
where $I=[0,\tau]$ and $M,\tau>0$ to be chosen later. By Duhamel's formula, it suffices to prove that the functional
\[
\Phi(u)(t):= e^{it\Delta^2} u_0 +i \int_0^t e^{i(t-s)\Delta^2} |u(s)|^\alpha u(s) ds
\]
is a contraction on $(X,d)$. By Strichartz estimate $(\ref{strichartz estimate Hgamma abstract})$,
\[
\|\Phi(u)\|_{\dot{B}^2(I)} \lesssim \|u_0\|_{\dot{H}^2}  + \|\nabla(|u|^\alpha u)\|_{L^2(I, L^{\frac{2d}{d+2}})}.
\]
By Lemma $\ref{lem nonlinear estimate}$, 
\[
\|\nabla(|u|^\alpha u)\|_{L^2(I, L^{\frac{2d}{d+2}})} \lesssim \|u\|^\alpha_{L^\infty(I, L^{\frac{2d}{d-4}}) } \|\nabla u\|_{L^2(I, L^n)}.
\]
We next use $(\ref{choice of nmm})$ together with the Sobolev embedding to bound
\begin{align*}
\|u\|_{L^\infty(I, L^{\frac{2d}{d-4}})} \lesssim \|\Delta u\|_{L^\infty(I, L^2)} \lesssim \|u\|_{\dot{B}^2(I)}, 
\end{align*}
and
\begin{align*}
\|\nabla u\|_{L^2(I, L^n)} \lesssim \|\Delta u\|_{L^2(I, L^{n^*})} \lesssim |I|^\theta \|\Delta u\|_{L^{m^*}(I, L^{n^*})} \lesssim |I|^\theta \|u\|_{\dot{B}^2(I)}.
\end{align*}
Thus, we get
\[
\|\Phi(u)\|_{\dot{B}^2(I)} \lesssim \|u_0\|_{\dot{H}^2} + |I|^\theta \|u\|^{\alpha+1}_{\dot{B}^2(I)}.
\]
We now estimate $\|\Phi(u)\|_{\dot{B}^{\Gact}(I)}$. To do so, we separate two cases $\Gact \geq 1$ and $0<\Gact <1$. In the case $\Gact \geq 1$, we estimate as above to get
\[
\|\Phi(u)\|_{\dot{B}^{\Gact}(I)} \lesssim \|u_0\|_{\dot{H}^{\Gact}} + |I|^\theta \|u\|^\alpha_{\dot{B}^2(I)} \|u\|_{\dot{B}^{\Gact}(I)}.
\]
In the case $0<\Gact<1$, we choose 
\begin{align}
p=\frac{8(\alpha+2)}{\alpha(d-4)}, \quad q= \frac{d(\alpha+2)}{d+2\alpha}, \label{choice pq}
\end{align}
and choose $(m,n)$ so that
\[
\frac{1}{p'} = \frac{1}{m} + \frac{\alpha}{p}, \quad \frac{1}{q'}=\frac{1}{q} + \frac{\alpha}{n}.
\]
It is easy to check that $(p,q)$ is biharmonic admissible and $n=\frac{dq}{d-2q}$. The later fact gives the Sobolev embedding $\dot{W}^{2,q} \hookrightarrow L^n$. By Strichartz estimate $(\ref{strichartz estimate L2 abstract})$,
\[
\|\Phi(u)\|_{\dot{B}^{\Gact}(I)} \lesssim \|u_0\|_{\dot{H}^{\Gact}}  + \||\nabla|^{\Gact}(|u|^\alpha u)\|_{L^{p'}(I, L^{q'})}.
\]
By Lemma $\ref{lem nonlinear estimate}$, 
\begin{align*}
\||\nabla|^{\Gact}(|u|^\alpha u)\|_{L^{p'}(I, L^{q'})} & \lesssim \|u\|^\alpha_{L^p(I, L^n)} \||\nabla|^{\Gact}u\|_{L^m(I, L^q)} \\
&\lesssim |I|^{\frac{1}{m}-\frac{1}{p}} \|\Delta u\|^\alpha_{L^p(I, L^q)} \||\nabla|^{\Gact}u\|_{L^p(I, L^q)} \\
& \lesssim |I|^\theta \|u\|^\alpha_{\dot{B}^2(I)} \|u\|_{\dot{B}^{\Gact}(I)}.
\end{align*}
In both cases, we have
\[
\|\Phi(u)\|_{\dot{B}^{\Gact}(I)} \lesssim \|u_0\|_{\dot{H}^{\Gact}} + |I|^\theta \|u\|^\alpha_{\dot{B}^2(I)} \|u\|_{\dot{B}^{\Gact}(I)}.
\]
Therefore,
\[
\|\Phi(u)\|_{\dot{B}^{\Gact}(I) \cap \dot{B}^2(I)} \lesssim \|u_0\|_{\dot{H}^{\Gact} \cap \dot{H}^2} + |I|^\theta \|u\|^\alpha_{\dot{B}^2(I)} \|u\|_{\dot{B}^{\Gact}(I) \cap \dot{B}^2(I)}.
\]
Similarly, by $(\ref{strichartz estimate L2 abstract})$, 
\[
\|\Phi(u)- \Phi(v)\|_{\dot{B}^0(I)} \lesssim \||u|^\alpha u - |v|^\alpha v\|_{L^{p'} (I, L^{q'})},
\]
where $(p,q)$ is as in $(\ref{choice pq})$. We estimate
\begin{align*}
\||u|^\alpha u - |v|^\alpha v\|_{L^{p'}(I, L^{q'})} &\lesssim \left( \|u\|^\alpha_{L^p(I, L^n)} + \|v\|^\alpha_{L^p(I, L^n)}\right) \|u-v\|_{L^m(I, L^q)} \\
&\lesssim |I|^\theta \left(\|\Delta u\|^\alpha_{L^p(I, L^q)} + \|\Delta v\|^\alpha_{L^p(I, L^q)} \right) \|u-v\|_{L^p(I, L^q)} \\
&\lesssim |I|^\theta \left(\|u\|^\alpha_{\dot{B}^2(I)} + \|v\|^\alpha_{\dot{B}^2(I)}\right) \|u-v\|_{\dot{B}^0(I)}.
\end{align*}
This shows that for all $u,v \in X$, there exists $C>0$ independent of $\tau$ and $u_0 \in \dot{H}^{\Gact} \cap \dot{H}^2$ such that
\begin{align}
\|\Phi(u)\|_{\dot{B}^{\Gact}(I)}  + \|\Phi(u)\|_{\dot{B}^2(I)} &\leq C\|u_0\|_{\dot{H}^{\Gact} \cap \dot{H}^2} + C\tau^\theta M^{\alpha+1},  \label{blowup rate proof}\\
d(\Phi(u), \Phi(v)) &\leq C\tau^\theta M^\alpha d(u,v). \nonumber
\end{align}
If we set $M=2C \|u_0\|_{\dot{H}^{\Gact} \cap \dot{H}^2}$ and choose $\tau>0$ so that
\[
C \tau^\theta M^\alpha \leq \frac{1}{2},
\]
then $\Phi$ is a strict contraction on $(X,d)$. This proves the existence of solution 
\[
u \in \dot{B}^{\Gact}(I) \cap \dot{B}^2(I).
\]
The time of existence depends only on the $\dot{H}^{\Gact} \cap \dot{H}^2$-norm of initial data. We thus have the blowup alternative. The conservation of energy follows from the standard approximation. The proof is complete.
\defendproof
\begin{coro}[Blowup rate] \label{coro blowup rate intercritical}
	Let $d \geq 5$, $0<\alpha<2^\star$ and $u_0 \in \dot{H}^{\Gact} \cap \dot{H}^2$. Assume that the corresponding solution $u$ to $(\ref{intercritical NL4S})$ given in Proposition $\ref{prop local well-posedness H dot 2}$ blows up at finite time $0<T<\infty$. Then there exists $C>0$ such that
	\begin{align}
	\|u(t)\|_{\dot{H}^{\Gact} \cap \dot{H}^2} > \frac{C}{(T-t)^{\frac{2-\Gace}{4}}}, \label{blowup rate intercritical}
	\end{align}
	for all $0<t<T$.
\end{coro}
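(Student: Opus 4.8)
The plan is to upgrade the fixed-point argument in the proof of Proposition \ref{prop local well-posedness H dot 2} into a \emph{quantitative} lower bound on the local existence time, and then run the standard ``restart at time $t$'' argument. First I would record the quantitative existence time. In that proof the solution is constructed on $I=[0,\tau]$ as soon as $C\tau^\theta M^\alpha\le \frac12$ with $M=2C\|u_0\|_{\dot{H}^{\Gact}\cap\dot{H}^2}$ and $\theta=1-\frac{(d-4)\alpha}{8}>0$, where $C$ is independent of $\tau$ and $u_0$. Solving for the largest admissible $\tau$ (equality in the contraction condition) shows that the solution exists on $[0,\tau]$ with
\[
\tau = c_0\,\|u_0\|_{\dot{H}^{\Gact}\cap\dot{H}^2}^{-\alpha/\theta}, \qquad c_0=(2C)^{-(\alpha+1)/\theta}.
\]
In particular, for any datum $v_0\in\dot{H}^{\Gact}\cap\dot{H}^2$ the maximal existence time obeys $T_{\max}(v_0)\ge c_0\|v_0\|_{\dot{H}^{\Gact}\cap\dot{H}^2}^{-\alpha/\theta}$.

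Second, I would identify the exponent by a direct computation. From $\Gact=\frac{d}{2}-\frac{4}{\alpha}$ one gets $2-\Gact=\frac{8-(d-4)\alpha}{2\alpha}=\frac{4\theta}{\alpha}$, hence $\frac{\theta}{\alpha}=\frac{2-\Gact}{4}$, which is exactly the power in $(\ref{blowup rate intercritical})$.

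Third, the restart argument. Fix $t\in[0,T)$ and apply the local theory with initial time $t$: since $u\in C([0,T),\dot{H}^{\Gact}\cap\dot{H}^2)$ the datum $u(t)$ lies in $\dot{H}^{\Gact}\cap\dot{H}^2$, and the equation is autonomous, so the solution emanating from $u(t)$ exists at least on the closed interval $[t,\,t+c_0\|u(t)\|_{\dot{H}^{\Gact}\cap\dot{H}^2}^{-\alpha/\theta}]$. By uniqueness it coincides with $u$ there, and since $u$ cannot be continued past $T$ (the blowup alternative in Proposition \ref{prop local well-posedness H dot 2}), we conclude $T-t\ge c_0\|u(t)\|_{\dot{H}^{\Gact}\cap\dot{H}^2}^{-\alpha/\theta}$. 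Rearranging and raising to the power $\theta/\alpha$ gives
\[
\|u(t)\|_{\dot{H}^{\Gact}\cap\dot{H}^2}\ge c_0^{\theta/\alpha}\,(T-t)^{-\theta/\alpha}=\frac{C}{(T-t)^{(2-\Gact)/4}},\qquad C:=c_0^{\theta/\alpha},
\]
and the strict inequality in $(\ref{blowup rate intercritical})$ follows because $u(t+\cdot)$ in fact exists on the \emph{closed} interval of length $c_0\|u(t)\|^{-\alpha/\theta}$ and can still be extended a little beyond, so the bound on $T-t$ is strict.

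The argument is essentially routine once the quantitative time bound is available, so the only real work is making sure the constant $C$ in $(\ref{blowup rate proof})$ is genuinely uniform in $u_0$ (which the proof of Proposition \ref{prop local well-posedness H dot 2} already guarantees) and carefully tracking the powers of $M$ to land on $\tau\sim\|u_0\|^{-\alpha/\theta}$. I expect the main point requiring care to be nothing more than this bookkeeping together with the algebraic identity $\theta/\alpha=(2-\Gact)/4$; there is no analytic obstacle beyond the local theory already established.
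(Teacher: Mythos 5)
Your argument is correct and is essentially the paper's own proof: both extract the quantitative local existence time $\tau\sim\|u_0\|_{\dot{H}^{\Gact}\cap\dot{H}^2}^{-\alpha/\theta}$ from the contraction condition $C\tau^\theta M^\alpha\le\frac12$ with $M=2C\|u_0\|_{\dot{H}^{\Gact}\cap\dot{H}^2}$ in the proof of Proposition \ref{prop local well-posedness H dot 2}, restart the flow at time $t$, and use the blowup alternative together with the identity $\theta/\alpha=(2-\Gact)/4$. The paper merely phrases the restart step contrapositively (the inequality $C\|u(t)\|+C(T-t)^\theta M^{\alpha+1}>M$ must hold for all $M>0$), which is the same bookkeeping you carry out.
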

\begin{proof}
	Let $0<t<T$. If we consider $(\ref{intercritical NL4S})$ with initial data $u(t)$, then it follows from $(\ref{blowup rate proof})$ and the fixed point argument that if for some $M>0$,
	\[
	C\|u(t)\|_{\dot{H}^{\Gact} \cap \dot{H}^2} + C(\tau-t)^\theta M^{\alpha+1} \leq M,
	\]
	then $\tau<T$. Thus, 
	\[
	C\|u(t)\|_{\dot{H}^{\Gact} \cap \dot{H}^2} + C(T-t)^\theta M^{\alpha+1} > M,
	\]
	for all $M>0$. Choosing $M= 2C\|u(t)\|_{\dot{H}^{\Gact} \cap \dot{H}^2}$, we see that
	\[
	(T-t)^\theta \|u(t)\|^\alpha_{\dot{H}^{\Gact} \cap \dot{H}^2} >C.
	\]
	This implies
	\[
	\|u(t)\|_{\dot{H}^{\Gact} \cap \dot{H}^2}> \frac{C}{(T-t)^{\frac{\theta}{\alpha}}},
	\]
	which is exactly $(\ref{blowup rate intercritical})$ since $\frac{\theta}{\alpha}= \frac{8-(d-4)\alpha}{8\alpha} = \frac{2-\Gact}{4}$. The proof is complete.
\end{proof}
\subsection{Profile decomposition}
The main purpose of this subsection is to prove the profile decomposition related to the focusing intercritical NL4S by following the argument of \cite{HmidiKeraani} (see also \cite{Guoblowup}).
\begin{theorem}[Profile decomposition] \label{theorem profile decomposition intercritical NL4S}
	Let $d\geq 1$ and $2_\star<\alpha<2^\star$. Let $(v_n)_{n\geq 1}$ be a bounded sequence in $\dot{H}^{\Gace} \cap \dot{H}^2$. Then there exist a subsequence of $(v_n)_{n\geq 1}$ (still denoted $(v_n)_{n\geq 1}$), a family $(x_n^j)_{j\geq 1}$ of sequences in $\R^d$ and a sequence $(V^j)_{j\geq 1}$ of $\dot{H}^{\Gace} \cap \dot{H}^2$ functions such that
	\begin{itemize}
		\item for every $k\ne j$,
		\begin{align}
		|x_n^k - x_n^j| \rightarrow \infty, \quad \text{as } n \rightarrow \infty, \label{pairwise orthogonality intercritical}
		\end{align}
		\item for every $l\geq 1$ and every $x \in \R^d$,
		\[
		v_n(x) = \sum_{j=1}^l V^j(x-x_n^j) + v_n^l(x),
		\]
		with
		\begin{align}
		\limsup_{n\rightarrow \infty} \|v^l_n\|_{L^q} \rightarrow 0, \quad \text{as } l \rightarrow \infty, \label{profile error intercritical}
		\end{align}
		for every $q \in (\alphce,2+2^\star)$, where $\alphce$ is given in $(\ref{critical lebesgue exponent})$. Moreover, 
		\begin{align}
		\|v_n\|^2_{\dot{H}^{\Gace}} &= \sum_{j=1}^l \|V^j\|^2_{\dot{H}^{\Gace}} + \|v^l_n\|^2_{\dot{H}^{\Gace}} + o_n(1), \label{profile identity 1 intercritical}, \\
		\|v_n\|^2_{\dot{H}^2} &= \sum_{j=1}^l \|V^j\|^2_{\dot{H}^2} + \|v^l_n\|^2_{\dot{H}^2} + o_n(1), \label{profile identity 2 intercritical},
		\end{align}
		as $n\rightarrow \infty$.
	\end{itemize}
\end{theorem}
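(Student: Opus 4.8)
The plan is to adapt the Hilbert-space extraction scheme of \cite{HmidiKeraani} to $X := \dot{H}^{\Gace} \cap \dot{H}^2$, equipped with $\|v\|_X^2 := \|v\|_{\dot{H}^{\Gace}}^2 + \|v\|_{\dot{H}^2}^2$. The decisive structural feature is that $X$ carries no scaling symmetry: the two homogeneous norms scale with different powers, so the dilation invariance of a single $\dot{H}^s$ is broken and the only source of non-compactness is translation; this is why the decomposition involves the shifts $x_n^j$ alone, with no concentration scales. The analytic heart is a refined Sobolev inequality: for each $q \in (\alphce, 2+2^\star)$, where $2+2^\star = \frac{2d}{d-4}$ is the $\dot{H}^2$-Sobolev exponent and $\alphce$ the $\dot{H}^{\Gace}$-one, there exist $\sigma>0$ and $\theta \in (0,1)$ with
\[
\|f\|_{L^q} \lesssim \|f\|_X^{1-\theta}\,\|f\|_{\dot{B}^{-\sigma}_{\infty,\infty}}^{\theta}.
\]
I would prove it via a Littlewood--Paley decomposition, estimating $\|P_N f\|_{L^q}$ by interpolating $\|P_N f\|_{L^\infty} \lesssim N^{\sigma}\|f\|_{\dot{B}^{-\sigma}_{\infty,\infty}}$ against $\|P_N f\|_{L^2} \lesssim \min(N^{-\Gace}\|f\|_{\dot{H}^{\Gace}}, N^{-2}\|f\|_{\dot{H}^2})$ and summing over dyadic $N$; the series converges precisely because $q$ lies strictly between the two critical exponents, so that low frequencies are tamed by $\dot{H}^{\Gace}$ and high ones by $\dot{H}^2$.

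Granting this, the key extraction lemma reads: if $(v_n)$ is bounded in $X$ and $\limsup_n \|v_n\|_{L^q} \ge \delta > 0$ for some admissible $q$, then there exist $V \ne 0$ in $X$ and $(x_n) \subset \R^d$ with $v_n(\cdot + x_n) \rightharpoonup V$ weakly in $X$ and $\|V\|_X \gtrsim \delta^{\beta}$ for some $\beta(q) > 0$. Indeed, the refined inequality forces $\limsup_n \|v_n\|_{\dot{B}^{-\sigma}_{\infty,\infty}} \gtrsim \delta^{1/\theta}$, so some dyadic block satisfies $N_n^{-\sigma}\|P_{N_n} v_n\|_{L^\infty} \gtrsim \delta^{1/\theta}$, nearly attained at a point $x_n$. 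The $X$-bound confines $N_n$ to a fixed dyadic annulus, so after translating by $x_n$ the frequency localization is preserved along a subsequence and the weak limit $V$ is nonzero, with $\|V\|_X$ bounded below in terms of $\delta$.

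With the extraction lemma in hand, I would build the profiles by the standard greedy iteration: put $v_n^0 := v_n$, let $\eta_l$ be the supremum of $\|V\|_X$ over all weak limits $V$ in $X$ of translated subsequences of $(v_n^l)_n$, choose $V^{l+1}$ and $(x_n^{l+1})$ realizing at least $\tfrac{1}{2}\eta_l$, and set $v_n^{l+1} := v_n^l - V^{l+1}(\cdot - x_n^{l+1})$. This greedy choice forces the pairwise orthogonality $(\ref{pairwise orthogonality intercritical})$: if $|x_n^k - x_n^j|$ stayed bounded for some $k \ne j$, the two weak limits would overlap and one could extract strictly more mass than $\eta_l$ permits. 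The Pythagorean expansions $(\ref{profile identity 1 intercritical})$ and $(\ref{profile identity 2 intercritical})$ then follow from two facts valid for any $L^2$-based norm: the weak convergence $v_n(\cdot + x_n^j) \rightharpoonup V^j$ produces, at each stage, a Brezis--Lieb-type splitting of the squared norm, while for $k \ne j$ the translate inner products $\scal{V^k(\cdot - x_n^k), V^j(\cdot - x_n^j)}$ vanish as $n \to \infty$ by $(\ref{pairwise orthogonality intercritical})$; applying this separately to the $\dot{H}^{\Gace}$- and $\dot{H}^2$-inner products gives the two identities.

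The vanishing $(\ref{profile error intercritical})$ of the remainder is then a matter of summability: the identities yield $\sum_{j \ge 1}\|V^j\|_X^2 \le \limsup_n \|v_n\|_X^2 < \infty$, hence $\|V^j\|_X \to 0$; since $\|V^{l+1}\|_X \ge \tfrac{1}{2}\eta_l$, this gives $\eta_l \to 0$, and the extraction lemma read contrapositively forces $\limsup_n \|v_n^l\|_{L^q} \to 0$ for each fixed admissible $q$, while interpolation against the two bounded endpoint norms spreads the decay over the whole interval $(\alphce, 2+2^\star)$. The step I expect to be the main obstacle is the refined Sobolev inequality together with the passage from $L^q$ non-vanishing to a nonzero weak limit: one must pin down the admissible range of $\sigma$, verify that the $X$-bound genuinely confines the block frequencies $N_n$ to a fixed annulus, and thereby confirm that a single translation -- with no accompanying dilation -- suffices to capture each bubble.
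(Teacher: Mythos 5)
Your proposal is correct and follows essentially the same route as the paper: the same greedy extraction of weak limits of translates governed by the quantity $\eta$, the same contradiction argument for the pairwise orthogonality of the shifts, and the same Hilbert-space Pythagorean expansions. The only divergence is cosmetic: where you package the remainder estimate as a refined Sobolev inequality through $\dot{B}^{-\sigma}_{\infty,\infty}$ plus an extraction lemma, the paper performs the equivalent computation by hand, splitting $v_n^l$ with a frequency-annulus cutoff $\chi_R$ and bounding $\limsup_{n\rightarrow\infty}\|\chi_R * v_n^l\|_{L^\infty}$ by $R^{4/\alpha}\eta(v_n^l)$ via duality with the set of weak limits of translates.
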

\begin{rem} \label{rem profile decomposition intercritical NL4S}
	In the case $\Gact=0$ or $\alpha=2_\star$, Theorem $\ref{theorem profile decomposition intercritical NL4S}$ is exactly Proposition 2.3 in \cite{ZhuYangZhang10} due to the fact $\dot{H}^0=L^2$ and $L^2 \cap \dot{H}^2 = H^2$.
\end{rem}
\noindent \textit{Proof of Theorem $\ref{theorem profile decomposition intercritical NL4S}$.}
Since $\dot{H}^{\Gact} \cap \dot{H}^2$ is a Hilbert space, we denote $\Omega(v_n)$ the set of functions obtained as weak limits of sequences of the translated $v_n(\cdot + x_n)$ with $(x_n)_{n\geq 1}$ a sequence in $\R^d$. Denote
\[
\eta(v_n):= \sup \{ \|v\|_{\dot{H}^{\Gact}} + \|v\|_{\dot{H}^2} : v \in \Omega(v_n)\}.
\]
Clearly,
\[
\eta(v_n) \leq \limsup_{n\rightarrow \infty} \|v_n\|_{\dot{H}^{\Gact}} + \|v_n\|_{\dot{H}^2}.
\]
We shall prove that there exist a sequence $(V^j)_{j\geq 1}$ of $\Omega(v_n)$ and a family $(x_n^j)_{j\geq 1}$ of sequences in $\R^d$ such that for every $k \ne j$,
\[
|x_n^k - x_n^j| \rightarrow \infty, \quad \text{as } n \rightarrow \infty,
\]
and up to a subsequence, the sequence $(v_n)_{n\geq 1}$ can be written as for every $l\geq 1$ and every $x \in \R^d$,
\[
v_n(x) = \sum_{j=1}^l V^j(x-x_n^j) + v^l_n(x), 
\]
with $\eta(v^l_n) \rightarrow 0$ as $l \rightarrow \infty$. Moreover, the identities $(\ref{profile identity 1 intercritical})$ and $(\ref{profile identity 2 intercritical})$ hold as $n \rightarrow \infty$. \newline
\indent Indeed, if $\eta(v_n) =0$, then we can take $V^j=0$ for all $j\geq 1$. Otherwise we choose $V^1 \in \Omega(v_n)$ such that
\[
\|V^1\|_{\dot{H}^{\Gact}} + \|V^1\|_{\dot{H}^2} \geq \frac{1}{2} \eta(v_n) >0. 
\]
By the definition of $\Omega(v_n)$, there exists  a sequence $(x^1_n)_{n\geq 1} \subset \R^d$ such that up to a subsequence,
\[
v_n(\cdot + x^1_n) \rightharpoonup V^1 \text{ weakly in } \dot{H}^{\Gact} \cap \dot{H}^2.
\]
Set $v_n^1(x):= v_n(x) - V^1(x-x^1_n)$. We see that $v^1_n(\cdot + x^1_n) \rightharpoonup 0$ weakly in $\dot{H}^{\Gact} \cap \dot{H}^2$ and
thus
\begin{align*}
\|v_n\|^2_{\dot{H}^{\Gact}} &= \|V^1\|^2_{\dot{H}^{\Gact}} + \|v^1_n\|^2_{\dot{H}^{\Gact}} + o_n(1),  \\
\|v_n\|^2_{\dot{H}^2} &= \|V^1\|^2_{\dot{H}^2} + \|v^1_n\|^2_{\dot{H}^2} + o_n(1),
\end{align*}
as $n \rightarrow \infty$. We now replace $(v_n)_{n\geq 1}$ by $(v^1_n)_{n\geq 1}$ and repeat the same process. If $\eta(v^1_n) =0$, then we choose $V^j=0$ for all $j \geq 2$. Otherwise there exist $V^2 \in \Omega(v^1_n)$ and a sequence $(x^2_n)_{n\geq 1} \subset \R^d$ such that
\[
\|V^2\|_{\dot{H}^{\Gact}} + \|V^2\|_{\dot{H}^2} \geq \frac{1}{2} \eta(v^1_n)>0,
\]
and
\[
v^1_n(\cdot+x^2_n) \rightharpoonup V^2 \text{ weakly in } \dot{H}^{\Gact} \cap \dot{H}^2.
\]
Set $v^2_n(x) := v^1_n(x) - V^2(x-x^2_n)$. We thus have
$v^2_n(\cdot +x^2_n) \rightharpoonup 0$ weakly in $\dot{H}^{\Gact} \cap \dot{H}^2$ and 
\begin{align*}
\|v^1_n\|^2_{\dot{H}^{\Gact}} & = \|V^2\|^2_{\dot{H}^{\Gact}} + \|v^2_n\|^2_{\dot{H}^{\Gact}} + o_n(1), \\
\|v^1_n\|^2_{\dot{H}^2} &= \|V^2\|^2_{\dot{H}^2} + \|v^2_n\|^2_{\dot{H}^2} + o_n(1),
\end{align*}
as $n \rightarrow \infty$. We claim that 
\[
|x^1_n - x^2_n| \rightarrow \infty, \quad \text{as } n \rightarrow \infty.
\]
In fact, if it is not true, then up to a subsequence, $x^1_n - x^2_n \rightarrow x_0$ as $n \rightarrow \infty$ for some $x_0 \in \R^d$. Since 
\[
v^1_n(x + x^2_n) = v^1_n(x +(x^2_n -x^1_n) + x^1_n),
\]
and $v^1_n (\cdot + x^1_n)$ converges weakly to $0$, we see that $V^2=0$. This implies that $\eta(v^1_n)=0$ and it is a contradiction. An argument of iteration and orthogonal extraction allows us to construct the family $(x^j_n)_{j\geq 1}$ of sequences in $\R^d$ and the sequence $(V^j)_{j\geq 1}$ of $\dot{H}^{\Gact} \cap \dot{H}^2$ functions satisfying the claim above. Furthermore, the convergence of the series $\sum_{j\geq 1}^\infty \|V^j\|^2_{\dot{H}^{\Gact}} + \|V^j\|^2_{\dot{H}^2}$ implies that 
\[
\|V^j\|^2_{\dot{H}^{\Gact}} + \|V^j\|^2_{\dot{H}^2} \rightarrow 0, \quad \text{as } j \rightarrow \infty.
\]
By construction, we have
\[
\eta(v^j_n) \leq 2 \left(\|V^{j+1}\|_{\dot{H}^{\Gact}} + \|V^{j+1}\|_{\dot{H}^2}\right),
\]
which proves that $\eta(v^j_n) \rightarrow 0$ as $j \rightarrow \infty$. To complete the proof of Theorem $\ref{theorem profile decomposition intercritical NL4S}$, it remains to show $(\ref{profile error intercritical})$. To do so, we introduce for $R>1$ a function $\chi_R \in \mathcal{S}$ satisfying $\hat{\chi}_R: \R^d \rightarrow [0,1]$ and
\[
\hat{\chi}_R(\xi) = \left\{
\begin{array}{clc}
1 &\text{if}& 1/R \leq |\xi| \leq R, \\
0 &\text{if}& |\xi| \leq 1/2R \vee |\xi|\geq 2R.
\end{array}
\right.
\]
We write
\[
v^l_n = \chi_R * v^l_n + (\delta - \chi_R) * v^l_n,
\]
where $*$ is the convolution operator. Let $q \in (\alphct, 2+2^\star)$ be fixed. By Sobolev embedding and the Plancherel formula, we have
\begin{align*}
\|(\delta -\chi_R) * v^l_n\|_{L^q} \lesssim \|(\delta-\chi_R) * v^l_n\|_{\dot{H}^\beta} &\lesssim \Big( \int |\xi|^{2\beta} |(1-\hat{\chi}_R(\xi)) \hat{v}^l_n(\xi)|^2 d\xi\Big)^{1/2} \\
&\lesssim \Big( \int_{|\xi|\leq 1/R} |\xi|^{2\beta} |\hat{v}^l_n(\xi)|^2 d\xi\Big)^{1/2} + \Big( \int_{|\xi|\geq R} |\xi|^{2\beta} |\hat{v}^l_n(\xi)|^2 d\xi\Big)^{1/2} \\
&\lesssim R^{\Gact-\beta} \|v^l_n\|_{\dot{H}^{\Gact}} + R^{\beta-2} \|v^l_n\|_{\dot{H}^2},
\end{align*}
where $\beta=\frac{d}{2}-\frac{d}{q} \in (\Gact,2)$. On the other hand, the H\"older interpolation inequality implies
\begin{align*}
\|\chi_R * v^l_n\|_{L^q} &\lesssim \|\chi_R * v^l_n\|^{\frac{\alphct}{q}}_{L^{\alphct}} \|\chi_R * v^l_n\|^{1-\frac{\alphct}{q}}_{L^\infty} \\
&\lesssim \|v^l_n\|^{\frac{\alphct}{q}}_{\dot{H}^{\Gact}} \|\chi_R * v^l_n\|^{1-\frac{\alphct}{q}}_{L^\infty}.
\end{align*}
Observe that
\[
\limsup_{n\rightarrow \infty} \|\chi_R * v^l_n\|_{L^\infty} = \sup_{x_n} \limsup_{n\rightarrow \infty} |\chi_R * v^l_n(x_n)|.
\]
Thus, by the definition of $\Omega(v^l_n)$, we infer that
\[
\limsup_{n\rightarrow \infty} \|\chi_R * v^l_n\|_{L^\infty} \leq \sup \Big\{ \Big| \int \chi_R(-x) v(x) dx\Big| : v \in \Omega(v^l_n)\Big\}.
\]
By the Plancherel formula, we have
\begin{align*}
\Big|\int \chi_R(-x) v(x) dx \Big| &= \Big| \int \hat{\chi}_R(\xi) \hat{v}(\xi) d\xi\Big| \lesssim \|\|\xi|^{-\Gact}\hat{\chi}_R\|_{L^2} \||\xi|^{\Gact}\hat{v}\|_{L^2} \\
&\lesssim R^{\frac{d}{2}-\Gact} \|\hat{\chi}_R\|_{\dot{H}^{-\Gact}} \|v\|_{\dot{H}^{\Gact}} \lesssim R^{\frac{4}{\alpha}} \eta(v^l_n).
\end{align*}
We thus obtain for every $l\geq 1$,
\begin{align*}
\limsup_{n\rightarrow \infty} \|v^l_n\|_{L^q} &\lesssim \limsup_{n\rightarrow \infty} \|(\delta-\chi_R)* v^l_n\|_{L^q} + \limsup_{n\rightarrow \infty} \|\chi_R * v^l_n\|_{L^q} \\
&\lesssim R^{\Gact-\beta} \|v^l_n\|_{\dot{H}^{\Gact}} + R^{\beta-2} \|v^l_n\|_{\dot{H}^2} + \|v^l_n\|^{\frac{\alphct}{q}}_{\dot{H}^{\Gact}} \left[R^{\frac{4}{\alpha}} \eta(v^l_n)\right]^{\left(1-\frac{\alphct}{q}\right)}.
\end{align*}
Choosing $R= \left[\eta(v^l_n)^{-1}\right]^{\frac{\alpha}{4}-\eps}$ for some $\eps>0$ small enough, we see that 
\[
\limsup_{n\rightarrow \infty} \|v^l_n\|_{L^q} \lesssim \eta(v^l_n)^{(\beta-\Gact)\left(\frac{\alpha}{4}-\eps\right)} \|v^l_n\|_{\dot{H}^{\Gact}} + \eta(v^l_n)^{(2-\beta)\left(\frac{\alpha}{4}-\eps\right)} \|v^l_n\|_{\dot{H}^2} +\eta(v^l_n)^{\eps \frac{4}{\alpha} \left(1-\frac{\alphct}{q}\right)} \| v^l_n\|_{\dot{H}^{\Gact}}^{\frac{\alphct}{q}}.
\]
Letting $l \rightarrow \infty$ and using the fact that $\eta(v^l_n) \rightarrow 0$ as $l \rightarrow \infty$ and the uniform boundedness in $\dot{H}^{\Gact} \cap \dot{H}^2$ of $(v^l_n)_{l\geq 1}$, we obtain 
\[
\limsup_{n \rightarrow \infty} \|v^l_n\|_{L^q} \rightarrow 0, \quad \text{as } l \rightarrow \infty.
\]
The proof is complete.
\defendproof 
\section{Variational analysis} \label{section variational analysis}
\setcounter{equation}{0}
Let $d\geq 1$ and $2_\star<\alpha<2^\star$. We consider the variational problems
\begin{align*}
A_{\text{GN}}&:=\max\{H(f): f \in \dot{H}^{\Gact} \cap \dot{H}^2\}, & H(f)&:= \|f\|^{\alpha+2}_{L^{\alpha+2}} \div \left[ \|f\|^{\alpha}_{\dot{H}^{\Gact}} \|f\|^2_{\dot{H}^2} \right], \\
B_{\text{GN}}&:= \max\{K(f): f \in L^{\alphct}\cap \dot{H}^2 \}, & K(f)&:= \|f\|^{\alpha+2}_{L^{\alpha+2}} \div \left[ \|f\|^{\alpha}_{L^{\alphct}} \|f\|^2_{\dot{H}^2} \right].
\end{align*}
Here $A_{\text{GN}}$ and $B_{\text{GN}}$ are respectively sharp constants in the Gagliardo-Nirenberg inequalities
\begin{align*}
\|f\|^{\alpha+2}_{L^{\alpha+2}} &\leq A_{\text{GN}} \|f\|^{\alpha}_{\dot{H}^{\Gact}} \|f\|^2_{\dot{H}^2}, \\
\|f\|^{\alpha+2}_{L^{\alpha+2}} &\leq B_{\text{GN}} \|f\|^{\alpha}_{L^{\alphct}} \|f\|^2_{\dot{H}^2}. 
\end{align*}
Let us start with the following observation.
\begin{lem} \label{lem maximizer intercritical}
	If $g$ and $h$ are maximizers of $H(f)$ and $K(f)$ respectively, then $g$ and $h$ satisfy
	\begin{align}
	A_{\emph{GN}} \|g\|^{\alpha}_{\dot{H}^{\Gact}} \Delta^2 g + \frac{\alpha}{2} A_{\emph{GN}} \|g\|^{\alpha-2}_{\dot{H}^{\Gact}}\|g\|^2_{\dot{H}^2} (-\Delta)^{\Gact}g -\frac{\alpha+2}{2}  |g|^{\alpha} g&=0, \label{maximizer equation intercritical 1} \\
	B_{\emph{GN}} \|h\|^{\alpha}_{L^{\alphce}} \Delta^2 h + \frac{\alpha}{2} B_{\emph{GN}} \|h\|^{\alpha-\alphce}_{L^{\alphce}} \|h\|^2_{\dot{H}^2}|h|^{\alphce-2} h -\frac{\alpha+2}{2}  |h|^{\alpha} h&=0, \label{maximizer equation intercritical 2}
	\end{align}
	respectively.
\end{lem}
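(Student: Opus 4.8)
The plan is to derive both identities as Euler--Lagrange equations, exploiting that $g$ and $h$ are maximizers of the respective quotients. I will present the argument for $g$ in detail; the one for $h$ is identical after replacing the homogeneous Sobolev norm by the Lebesgue norm.

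First I would fix a test function $\varphi \in \mathcal{S}_0$ and consider the scalar map $\epsilon \mapsto H(g+\epsilon\varphi)$ for real $\epsilon$ near $0$. Since $g$ maximizes $H$, this map has an interior maximum at $\epsilon=0$, so $\tfrac{d}{d\epsilon}H(g+\epsilon\varphi)\big|_{\epsilon=0}=0$. Writing $H = J/(G\cdot K_2)$ with $J(f)=\|f\|^{\alpha+2}_{L^{\alpha+2}}$, $G(f)=\|f\|^\alpha_{\dot H^{\Gact}}$ and $K_2(f)=\|f\|^2_{\dot H^2}$, the quotient rule together with the identity $A_{\text{GN}}=J(g)/[G(g)K_2(g)]$ reduces the vanishing of the derivative to
\[
J'(g)[\varphi] = A_{\text{GN}}\big(G'(g)[\varphi]\,K_2(g) + G(g)\,K_2'(g)[\varphi]\big).
\]

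Next I would compute each Gateaux derivative. Differentiating under the integral sign (justified by dominated convergence, using that $\alpha+2>1$ makes $z\mapsto|z|^{\alpha+2}$ of class $C^1$, and $g\in L^{\alpha+2}$), one finds $J'(g)[\varphi]=(\alpha+2)\re{\scal{|g|^\alpha g,\varphi}}$, while after integration by parts $K_2'(g)[\varphi]=2\re{\scal{\Delta^2 g,\varphi}}$ and $G'(g)[\varphi]=\alpha\|g\|^{\alpha-2}_{\dot H^{\Gact}}\re{\scal{(-\Delta)^{\Gact}g,\varphi}}$. Substituting these into the reduced identity gives
\[
(\alpha+2)\re{\scal{|g|^\alpha g,\varphi}} = A_{\text{GN}}\Big(\alpha\|g\|^{\alpha-2}_{\dot H^{\Gact}}\|g\|^2_{\dot H^2}\re{\scal{(-\Delta)^{\Gact}g,\varphi}} + 2\|g\|^\alpha_{\dot H^{\Gact}}\re{\scal{\Delta^2 g,\varphi}}\Big).
\]
Because this holds for every $\varphi$, and replacing $\varphi$ by $i\varphi$ converts each real part into the corresponding imaginary part, the complex pairings themselves must coincide. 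Collecting terms and dividing by $2$ then yields $(\ref{maximizer equation intercritical 1})$ in the sense of distributions.

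Finally, for $h$ I would repeat the computation verbatim with $K = J/(L\cdot K_2)$ and $L(f)=\|f\|^\alpha_{L^{\alphct}}$. The only new ingredient is $L'(h)[\varphi]=\alpha\|h\|^{\alpha-\alphct}_{L^{\alphct}}\re{\scal{|h|^{\alphct-2}h,\varphi}}$, obtained by differentiating $\big(\int|h|^{\alphct}\big)^{\alpha/\alphct}$ and using $\alphct>1$; substituting as before produces $(\ref{maximizer equation intercritical 2})$. I do not anticipate a genuine obstacle: this is the standard Lagrange-multiplier computation for a homogeneous quotient, and the only points needing care are the justification of differentiation under the integral for the non-polynomial nonlinearities $|f|^{\alpha+2}$ and $|f|^{\alphct}$, and the fact that the maximizers are nonzero, so that dividing by $G(g)K_2(g)$, respectively $L(h)K_2(h)$, is legitimate.
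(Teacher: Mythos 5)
Your proposal is correct and follows essentially the same route as the paper: the paper also computes $\frac{d}{d\eps}\big|_{\eps=0}H(g+\eps\phi)=0$ for $\phi\in\mathcal{S}_0$, evaluates the same three Gateaux derivatives, applies the quotient rule, and divides by $2\|g\|^{\alpha}_{\dot{H}^{\Gact}}\|g\|^2_{\dot{H}^2}$ using $A_{\text{GN}}=H(g)$ to obtain $(\ref{maximizer equation intercritical 1})$, with the analogous computation of the derivative of $\|h+\eps\phi\|^{\alpha}_{L^{\alphct}}$ for $(\ref{maximizer equation intercritical 2})$. Your added remarks on replacing $\varphi$ by $i\varphi$ and on the nonvanishing of the denominators are details the paper leaves implicit.
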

\begin{proof}
	If $g$ is a maximizer of $H$ in $\dot{H}^{\Gact} \cap \dot{H}^2$, then $g$ must satisfy the Euler-Lagrange equation
	\[
	\frac{d}{d\eps}\Big|_{\vert \eps=0} H(g+\eps \phi) =0,
	\]
	for all $\phi \in \mathcal{S}_0$. A direct computation shows
	\begin{align*}
	\left.\frac{d}{d\eps}\right|_{\eps=0} \|g+\eps \phi\|^{\alpha+2}_{L^{\alpha+2}} &= (\alpha+2)  \int \re{(|g|^\alpha g \overline{\phi})} dx,\\
	\left.\frac{d}{d\eps}\right|_{\eps=0} \|g+\eps \phi\|^\alpha_{\dot{H}^{\Gact}} &= \alpha \|g\|^{\alpha-2}_{\dot{H}^{\Gact}}\int \re{((-\Delta)^{\Gact} g \overline{\phi})}dx, 
	\end{align*}
	and 
	\begin{align*}
	\left.\frac{d}{d\eps}\right|_{\eps=0} \|g+\eps \phi\|^2_{\dot{H}^2} =2 \int \re{(\Delta^2 g \overline{\phi})} dx.
	\end{align*}
	We thus get
	\begin{align*}
	(\alpha+2) \|g\|^{\alpha}_{\dot{H}^{\Gact}} \|g\|^2_{\dot{H}^2} |g|^\alpha g -  \alpha \|g\|^{\alpha+2}_{L^{\alpha+2}} \|g\|^{\alpha-2}_{\dot{H}^{\Gact}} \|g\|^2_{\dot{H}^2} (-\Delta)^{\Gact} g - 2 \|g\|^{\alpha+2}_{L^{\alpha+2}} \|g\|^\alpha_{\dot{H}^{\Gact}} \Delta^2 g =0.
	\end{align*}
	Dividing by $2\|g\|^{\alpha}_{\dot{H}^{\Gact}} \|g\|^2_{\dot{H}^2}$, we obtain $(\ref{maximizer equation intercritical 1})$. The proof of $(\ref{maximizer equation intercritical 2})$ is similar using the fact that
	\[
	\left.\frac{d}{d\eps}\right|_{\eps=0} \|h+\eps \phi\|^\alpha_{L^{\alphct}} = \alpha \|h\|^{\alpha-\alphct}_{L^{\alphct}} \int \re{(|h|^{\alphct-2} h \overline{\phi})}dx.
	\]
	The proof is complete.
\end{proof}
We next use the profile decomposition given in Theorem $\ref{theorem profile decomposition intercritical NL4S}$ to obtain the following variational structure of the sharp constants $A_{\text{GN}}$ and $B_{\text{GN}}$. 
\begin{prop}[Variational structure of sharp constants] \label{prop variational structure ground state intercritical}
	Let $d\geq 1$ and $2_\star<\alpha<2^\star$.
	\begin{itemize}
		\item The sharp constant $A_{\emph{GN}}$ is attained at a function $U \in \dot{H}^{\Gact} \cap \dot{H}^2$ of the form
		\[
		U(x) = a Q(\lambda x + x_0),
		\]
		for some $a \in \C^*, \lambda>0$ and $x_0 \in \R^d$, where $Q$ is a solution to the elliptic equation
		\begin{align}
		\Delta^2 Q +  (-\Delta)^{\Gact} Q - |Q|^\alpha Q =0. \label{elliptic equation critical sobolev}
		\end{align}
		Moreover, 
		\[
		A_{\emph{GN}} = \frac{\alpha+2}{2} \|Q\|^{-\alpha}_{\dot{H}^{\Gace}}.
		\]
		\item The sharp constant $B_{\emph{GN}}$ is attained at a function $V \in L^{\alphce} \cap \dot{H}^2$ of the form 
		\[
		V(x) = b R(\mu x + y_0),
		\]
		for some $b \in \C^*, \mu>0$ and $y_0 \in \R^d$, where $R$ is a solution to the elliptic equation
		\begin{align}
		\Delta^2 R + |R|^{\alphce-2} R - |R|^\alpha R =0. \label{elliptic equation critical lebesgue}
		\end{align}
		Moreover, 
		\[
		B_{\emph{GN}} = \frac{\alpha+2}{2} \|R\|^{-\alpha}_{L^{\alphce}}.
		\]
	\end{itemize}
\end{prop}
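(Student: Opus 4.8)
The plan is to prove both statements by the same three-stage scheme: (i) produce a maximizer via the profile decomposition, (ii) identify it, up to the scaling and translation invariances of the functional, with a solution of the relevant elliptic equation through the Euler--Lagrange equation of Lemma~\ref{lem maximizer intercritical}, and (iii) evaluate the sharp constant from the pairing and Pohozaev identities satisfied by that solution. I carry out the details for $A_{\text{GN}}$; the case of $B_{\text{GN}}$ is entirely parallel. First I take a maximizing sequence $(f_n)$ for $H$. Since $H$ is invariant under $f \mapsto a f(\lambda \cdot + x_0)$ with $a\in\C^*$, $\lambda>0$, $x_0\in\R^d$, I normalize so that $\|f_n\|_{\dot{H}^{\Gact}} = \|f_n\|_{\dot{H}^2} = 1$, whence $\|f_n\|^{\alpha+2}_{L^{\alpha+2}} \to A_{\text{GN}}$ and $(f_n)$ is bounded in $\dot{H}^{\Gact}\cap\dot{H}^2$. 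Applying Theorem~\ref{theorem profile decomposition intercritical NL4S} yields profiles $V^j$ and translations $x_n^j$ with $f_n = \sum_{j=1}^l V^j(\cdot - x_n^j) + v_n^l$. Since $\alpha+2 \in (\alphct, 2+2^\star)$ (indeed $\alpha+2<2+2^\star$ because $\alpha<2^\star$, and $\alpha+2>\alphct=d\alpha/4$ is again equivalent to $\alpha<2^\star$), the remainder satisfies $(\ref{profile error intercritical})$ at $q=\alpha+2$. Combined with the asymptotic decoupling of the $L^{\alpha+2}$ norm over the profiles, a consequence of the pairwise orthogonality $(\ref{pairwise orthogonality intercritical})$ proved by a Brezis--Lieb type expansion, this gives $A_{\text{GN}} = \sum_{j\geq 1} \|V^j\|^{\alpha+2}_{L^{\alpha+2}}$.

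The crux is to show that the maximizing sequence does not split. Writing $a_j := \|V^j\|^2_{\dot{H}^{\Gact}}$ and $b_j := \|V^j\|^2_{\dot{H}^2}$, the identities $(\ref{profile identity 1 intercritical})$ and $(\ref{profile identity 2 intercritical})$ give $\sum_j a_j \leq 1$ and $\sum_j b_j \leq 1$. Applying $(\ref{sharp gagliardo-nirenberg inequality intro})$ to each profile, $\|V^j\|^{\alpha+2}_{L^{\alpha+2}} \leq A_{\text{GN}}\, a_j^{\alpha/2} b_j$, and summing I obtain
\[
1 \leq \sum_j a_j^{\alpha/2} b_j \leq \Big(\sup_j a_j\Big)^{\alpha/2} \sum_j b_j \leq \Big(\sup_j a_j\Big)^{\alpha/2}.
\]
Hence $\sup_j a_j \geq 1$; since also $\sup_j a_j \leq \sum_j a_j \leq 1$, exactly one profile $V := V^1$ is nonzero, with $a_1 = 1$ and $a_j=0$ for $j\geq 2$. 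Tracing back the equalities then forces $b_1 = 1$ and, via the profile identities, the vanishing of all remainders in $\dot{H}^{\Gact}\cap\dot{H}^2$. Thus $f_n(\cdot + x_n^1) \to V$ strongly (weak convergence together with convergence of norms in a Hilbert space), and $V$ attains $A_{\text{GN}}$.

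By Lemma~\ref{lem maximizer intercritical}, $V$ solves $(\ref{maximizer equation intercritical 1})$. I then seek $Q(x) = \mu V(\nu x)$ solving the normalized equation $(\ref{elliptic equation critical sobolev})$; matching the three coefficients of $(\ref{maximizer equation intercritical 1})$ under this scaling gives two relations that are solvable with $\mu>0$, $\nu>0$, so that $V$ has the asserted form $U(x)=aQ(\lambda x + x_0)$. Since $H$ is scaling invariant, $Q$ is itself a maximizer, so $A_{\text{GN}} = \|Q\|^{\alpha+2}_{L^{\alpha+2}}\big/\big(\|Q\|^\alpha_{\dot{H}^{\Gact}}\|Q\|^2_{\dot{H}^2}\big)$. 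To evaluate this I use two identities from $(\ref{elliptic equation critical sobolev})$: pairing with $\overline{Q}$ gives $\|Q\|^2_{\dot{H}^2} + \|Q\|^2_{\dot{H}^{\Gact}} = \|Q\|^{\alpha+2}_{L^{\alpha+2}}$, while pairing with $x\cdot\nabla\overline{Q}$ (equivalently, differentiating the scaled norms of $Q(\lambda\cdot)$ at $\lambda=1$, which vanishes since $Q$ is a critical point) gives the Pohozaev relation
\[
\tfrac{4-d}{2}\|Q\|^2_{\dot{H}^2} + \tfrac{2\Gact - d}{2}\|Q\|^2_{\dot{H}^{\Gact}} + \tfrac{d}{\alpha+2}\|Q\|^{\alpha+2}_{L^{\alpha+2}} = 0.
\]
Eliminating the $L^{\alpha+2}$ term and using $2\Gact - d = -8/\alpha$ collapses these to $\|Q\|^{\alpha+2}_{L^{\alpha+2}} = \tfrac{\alpha+2}{2}\|Q\|^2_{\dot{H}^2}$, which substituted above yields exactly $A_{\text{GN}} = \tfrac{\alpha+2}{2}\|Q\|^{-\alpha}_{\dot{H}^{\Gact}}$.

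For $B_{\text{GN}}$ the same scheme applies with $\dot{H}^{\Gact}$ replaced by $L^{\alphct}$ throughout: existence follows from the analogous profile decomposition in $L^{\alphct}\cap\dot{H}^2$ (established by the method of Theorem~\ref{theorem profile decomposition intercritical NL4S}, using $\dot{H}^2\hookrightarrow L^{2+2^\star}$ to split the $L^{\alpha+2}$ norm), the Euler--Lagrange equation $(\ref{maximizer equation intercritical 2})$ together with a rescaling produces $R$ solving $(\ref{elliptic equation critical lebesgue})$, and the two identities (pairing with $\overline{R}$, and the scaling identity, noting $d/\alphct = 4/\alpha$) again reduce to $\|R\|^{\alpha+2}_{L^{\alpha+2}} = \tfrac{\alpha+2}{2}\|R\|^2_{\dot{H}^2}$, giving $B_{\text{GN}} = \tfrac{\alpha+2}{2}\|R\|^{-\alpha}_{L^{\alphct}}$. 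I expect the main obstacle to be the single-profile concentration step, in particular justifying the asymptotic decoupling of the $L^{\alpha+2}$ norm across the orthogonal profiles and then using the sharpness of the constant to kill all but one bubble; for $B_{\text{GN}}$ a secondary difficulty is securing the profile decomposition in $L^{\alphct}\cap\dot{H}^2$, since a maximizing sequence for $K$ need not be bounded in $\dot{H}^{\Gact}$.
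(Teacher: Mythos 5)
Your proof is correct and follows essentially the same route as the paper's: normalize by scaling so that $\|f_n\|_{\dot{H}^{\Gact}}=\|f_n\|_{\dot{H}^2}=1$, apply Theorem \ref{theorem profile decomposition intercritical NL4S}, decouple the $L^{\alpha+2}$ norm via the pairwise orthogonality, use the sharp inequality on each profile together with $\sum_j\|V^j\|^2_{\dot{H}^{\Gact}}\leq 1$ and $\sum_j\|V^j\|^2_{\dot{H}^2}\leq 1$ to force a single bubble, and then pass through Lemma \ref{lem maximizer intercritical} and a rescaling to produce $Q$. The only departures are minor: you evaluate $A_{\text{GN}}$ via the Pohozaev identities (which the paper records separately in Remark \ref{rem variational analysis intercritical}) rather than directly from the normalization of the rescaling constants $(\ref{choice of a intercritical})$, and you correctly flag that the $B_{\text{GN}}$ case requires a profile decomposition in $L^{\alphct}\cap\dot{H}^2$ because a normalized maximizing sequence for $K$ need not be bounded in $\dot{H}^{\Gact}$ --- a genuine point that the paper's ``treated similarly'' glosses over.
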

\begin{proof}
	We only give the proof for $A_{\text{GN}}$, the one for $B_{\text{GN}}$ is treated similarly using the Sobolev embedding $\dot{H}^{\Gact} \hookrightarrow L^{\alphct}$. We firstly observe that $H$ is invariant under the scaling
	\[
	f_{\mu, \lambda}(x) := \mu f(\lambda x), \quad \mu, \lambda>0.
	\]
	Indeed, a simple computation shows
	\[
	\|f_{\mu, \lambda} \|^{\alpha+2}_{L^{\alpha+2}} = \mu^{\alpha+2} \lambda^{-d} \|f\|^{\alpha+2}_{L^{\alpha+2}}, \quad \|f_{\mu, \lambda} \|_{\dot{H}^{\Gact}}^\alpha = \mu^\alpha \lambda^{-4} \|f\|^\alpha_{\dot{H}^{\Gact}}, \quad \|f_{\mu, \lambda}\|^2_{\dot{H}^2} = \mu^2 \lambda^{4-d} \|f\|_{\dot{H}^2}^2.
	\]
	We thus get $H(f_{\mu, \lambda}) = H(f)$ for any $\mu, \lambda>0$. Moreover, if we set $g(x) = \mu f(\lambda x)$ with 
	\[
	\mu = \left(\frac{\|f\|^{\frac{d}{2}-2}_{\dot{H}^{\Gact}}}{\|f\|_{\dot{H}^2}^{\frac{4}{\alpha}}}\right)^{\frac{1}{2-\Gact}}, \quad \lambda = \left(\frac{\|f\|_{\dot{H}^{\Gact}}}{\|f\|_{\dot{H}^2}}\right)^{\frac{1}{2-\Gact}},
	\]
	then $\|g\|_{\dot{H}^{\Gact}} = \|g\|_{\dot{H}^2} = 1$ and $H(g) = H(f)$. Now let $(v_n)_{n\geq 1}$ be the maximizing sequence such that $H(v_n) \rightarrow A_{\text{GN}}$ as $n\rightarrow \infty$. After scaling, we may assume that $\|v_n\|_{\dot{H}^{\Gact}} = \|v_n\|_{\dot{H}^2} =1$ and $H(v_n) = \|v_n\|^{\alpha+2}_{L^{\alpha+2}} \rightarrow A_{\text{GN}}$ as $n\rightarrow \infty$. Since $(v_n)_{n\geq 1}$ is bounded in $\dot{H}^{\Gact} \cap \dot{H}^2$, it follows from the profile decomposition given in Theorem $\ref{theorem profile decomposition intercritical NL4S}$ that there exist a sequence $(V^j)_{j\geq 1}$ of $\dot{H}^{\Gact} \cap \dot{H}^2$ functions and a family $(x^j_n)_{j\geq 1}$ of sequences in $\R^d$ such that up to a subsequence, 
	\[
	v_n(x) = \sum_{j=1}^l V^j(x-x^j_n) + v^l_n(x),
	\]
	and $(\ref{profile error intercritical})$ and  the identities $(\ref{profile identity 1 intercritical}), (\ref{profile identity 2 intercritical})$ hold. In particular, we have for any $l\geq 1$,
	\begin{align}
	\sum_{j=1}^l \|V^j\|^2_{\dot{H}^{\Gact}} \leq 1, \quad \sum_{j=1}^l \|V^j\|^2_{\dot{H}^2} \leq 1, \label{bounded sequence variational structure intercritical}
	\end{align}
	and
	\[
	\limsup_{n\rightarrow \infty} \|v^l_n\|^{\alpha+2}_{L^{\alpha+2}} \rightarrow 0, \quad \text{as } l \rightarrow \infty. 
	\]
	We have
	\begin{align}
	A_{\text{GN}} &= \lim_{n\rightarrow \infty} \|v_n\|^{\alpha+2}_{L^{\alpha+2}} = \limsup_{n\rightarrow \infty} \Big\| \sum_{j=1}^l V^j(\cdot - x^j_n) + v^l_n \Big\|^{\alpha+2}_{L^{\alpha+2}} \nonumber \\
	&\leq \limsup_{n\rightarrow \infty} \Big( \Big\| \sum_{j=1}^l V^j(\cdot - x^j_n)  \Big\|_{L^{\alpha+2}} + \|v^l_n\|_{L^{\alpha+2}} \Big)^{\alpha+2} \nonumber \\
	&\leq \limsup_{n\rightarrow \infty} \Big\| \sum_{j=1}^\infty V^j(\cdot - x^j_n)  \Big\|_{L^{\alpha+2}}^{\alpha+2}. \label{sum intercritical}
	\end{align}
	By the elementary inequality 
	\begin{align}
	\left| \Big| \sum_{j=1}^l a_j\Big|^{\alpha+2} - \sum_{j=1}^l |a_j|^{\alpha+2} \right| \leq C \sum_{j \ne k} |a_j| |a_k|^{\alpha+1}, \label{elementary inequality intercritical}
	\end{align}
	we have
	\begin{align*}
	\int \Big| \sum_{j=1}^l V^j(x -x^j_n)\Big|^{\alpha+2} dx &\leq \sum_{j=1}^l \int |V^j(x-x^j_n)|^{\alpha+2} dx + C \sum_{j\ne k} \int |V^j(x-x^j_n)||V^k(x-x^k_n)|^{\alpha+1} dx \\
	&\leq \sum_{j=1}^l \int |V^j(x-x^j_n)|^{\alpha+2} dx +  C \sum_{j \ne k} \int |V^j(x+ x^k_n-x^j_n)| |V^k(x)|^{\alpha+1} dx.
	\end{align*}
	Using the pairwise orthogonality $(\ref{pairwise orthogonality intercritical})$, the H\"older inequality implies that $V^j(\cdot + x^k_n-x^j_n) \rightharpoonup 0$ in $\dot{H}^{\Gact} \cap \dot{H}^2$ as $n \rightarrow \infty$ for any $j \ne k$. This leads to the mixed terms in the sum $(\ref{sum intercritical})$ vanish as $n\rightarrow \infty$. This shows that
	\[
	A_{\text{GN}} \leq \sum_{j=1}^\infty \|V^j\|^{\alpha+2}_{L^{\alpha+2}}.
	\]
	By the definition of $A_{\text{GN}}$, we have 
	\[
	\frac{\|V^j\|_{L^{\alpha+2}}^{\alpha+2}}{A_{\text{GN}}} \leq \|V^j\|^{\alpha}_{\dot{H}^{\Gact}} \|V^j\|^2_{\dot{H}^2}.
	\]
	This implies that
	\[
	1 \leq \frac{\sum_{j=1}^\infty \|V^j\|_{L^{\alpha+2}}^{\alpha+2}}{A_{\text{GN}}} \leq \sup_{j\geq 1} \|V^j\|^{\alpha}_{\dot{H}^{\Gact}} \sum_{j=1}^\infty \|V^j\|^2_{\dot{H}^2}.
	\]
	Since $\sum_{j\geq 1} \|V^j\|^2_{\dot{H}^{\Gact}}$ is convergent, there exists $j_0 \geq 1$ such that
	\[
	\|V^{j_0}\|_{\dot{H}^{\Gact}} = \sup_{j\geq 1} \|V^j\|_{\dot{H}^{\Gact}}. 
	\]
	By $(\ref{bounded sequence variational structure intercritical})$, we see that
	\[
	1 \leq \|V^{j_0}\|^{\alpha}_{\dot{H}^{\Gact}} \sum_{j=1}^\infty \|V^j\|_{\dot{H}^2}^2 \leq \|V^{j_0}\|_{\dot{H}^{\Gact}}^{\alpha}. 
	\]
	It follows from $(\ref{bounded sequence variational structure intercritical})$ that $\|V^{j_0}\|_{\dot{H}^{\Gact}}=1$. This shows that there is only one term $V^{j_0}$ is non-zero, hence
	\[
	\|V^{j_0}\|_{\dot{H}^{\Gact}} = \|V^{j_0}\|_{\dot{H}^2} = 1, \quad \|V^{j_0}\|_{L^{\alpha+2}}^{\alpha+2} = A_{\text{GN}}. 
	\]
	It means that $V^{j_0}$ is the maximizer of $H$ and Lemma $\ref{lem maximizer intercritical}$ shows that
	\[
	A_{\text{GN}} \Delta^2 V^{j_0} + \frac{\alpha}{2} A_{\text{GN}}  (-\Delta)^{\Gact} V^{j_0} -\frac{\alpha+2}{2}  |V^{j_0}|^{\alpha} V^{j_0}=0.
	\]
	Now if we set $V^{j_0}(x)= a Q(\lambda x+x_0)$ for some $a \in \C^*, \lambda>0$ and $x_0 \in \R^d$ to be chosen shortly, then $Q$ solves $(\ref{elliptic equation critical sobolev})$ provided that
	\begin{align}
	|a| = \left(\frac{2 \lambda^4 A_{\text{GN}}}{\alpha+2}\right)^{\frac{1}{\alpha}}, \quad \lambda = \Big(\frac{\alpha}{2}\Big)^{\frac{1}{2(2-\Gact)}}. \label{choice of a intercritical}
	\end{align}
	This shows the existence of solutions to the elliptic equation $(\ref{elliptic equation critical sobolev})$. We now compute the sharp constant $A_{\text{GN}}$ in terms of $Q$. We have
	\[
	1=\|V^{j_0}\|^\alpha_{\dot{H}^{\Gact}} = |a|^\alpha \lambda^{-4} \|Q\|^\alpha_{\dot{H}^{\Gact}} = \frac{2A_{\text{GN}}}{\alpha+2} \|Q\|^\alpha_{\dot{H}^{\Gact}}.
	\]
	This implies $A_{\text{GN}} = \frac{\alpha+2}{2} \|Q\|^{-\alpha}_{\dot{H}^{\Gact}}$. The proof is complete. 
\end{proof}
\begin{rem} \label{rem variational analysis intercritical}
	Using $(\ref{choice of a intercritical})$ and the fact
	\begin{align*}
	1 = \|V^{j_0}\|^\alpha_{\dot{H}^{\Gact}} &= |a|^\alpha \lambda^{-4} \|Q\|^\alpha_{\dot{H}^{\Gact}}, \\
	1 = \|V^{j_0}\|^2_{\dot{H}^2} &= |a|^2 \lambda^{4-d} \|Q\|^2_{\dot{H}^2}, \\
	A_{\text{GN}} =\|V^{j_0}\|^{\alpha+2}_{L^{\alpha+2}} &= |a|^{\alpha+2} \lambda^{-d}\|Q\|^{\alpha+2}_{L^{\alpha+2}},
	\end{align*}
	a direct computation shows the following Pohozaev identities
	\begin{align}
	\|Q\|^2_{\dot{H}^{\Gact}} = \frac{\alpha}{2} \|Q\|^2_{\dot{H}^2} = \frac{\alpha}{\alpha+2} \|Q\|^{\alpha+2}_{L^{\alpha+2}}. \label{pohozaev identities}
	\end{align}
	Another way to see above identities is to multiply $(\ref{elliptic equation critical sobolev})$ with $\overline{Q}$ and $x \cdot  \nabla \overline{Q}$ and integrate over $\R^d$ and perform integration by parts. Indeed, multiplying $(\ref{elliptic equation critical sobolev})$ with $\overline{Q}$ and integrating by parts, we get
	\begin{align}
	\|Q\|^2_{\dot{H}^2}  + \|Q\|^2_{\dot{H}^{\Gact}} - \|Q\|^{\alpha+2}_{L^{\alpha+2}} =0. \label{pohozaev equation 1}
	\end{align}
	Multiplying $(\ref{elliptic equation critical sobolev})$ with $x\cdot \nabla \overline{Q}$, integrating by parts and taking the real part, we have
	\begin{align}
	\Big(2-\frac{d}{2}\Big) \|Q\|^2_{\dot{H}^2} + \Big(\Gact-\frac{d}{2}\Big) \|Q\|^2_{\dot{H}^{\Gact}} + \frac{d}{\alpha+2} \|Q\|^{\alpha+2}_{L^{\alpha+2}} =0. \label{pohozaev equation 2}
	\end{align}
	From $(\ref{pohozaev equation 1})$ and $(\ref{pohozaev equation 2})$, we obtain $(\ref{pohozaev identities})$. To see $(\ref{pohozaev equation 2})$, we claim that for $\gamma \geq 0$,
	\begin{align}
	\re{\int (-\Delta)^\gamma Q x \cdot \nabla \overline{Q} dx }  = \Big(\gamma-\frac{d}{2}\Big) \|Q\|^2_{\dot{H}^\gamma}. \label{claim}
	\end{align}
	In fact, by Fourier transform,
	\begin{align}
	\re{\int (-\Delta)^\gamma Q x \cdot \nabla \overline{Q} dx } &= \re{ \int \mathcal{F}[(-\Delta)^\gamma Q] \mathcal{F}^{-1}[x \cdot \nabla \overline{Q}] d\xi} \nonumber \\
	&= \re{ \int \mathcal{F}[(-\Delta)^\gamma Q] \overline{\mathcal{F}[x \cdot \nabla Q]} d\xi} \nonumber \\
	&= \re{ \int |\xi|^{2\gamma} \mathcal{F}(Q) \left(-d \overline{F(Q)} - \xi \cdot \nabla_\xi \overline{\mathcal{F}(Q)} \right) d\xi }  \nonumber \\
	&= -d \|Q\|^2_{\dot{H}^\gamma} - \re{ \int |\xi|^{2\gamma} \mathcal{F}(Q) \xi \cdot \nabla_\xi \overline{F(Q)} d\xi}. \label{pohozaev equation proof}
	\end{align}
	Here we use the fact that $\mathcal{F}(x_j \partial_{x_j} u) = i \partial_{\xi_j} \mathcal{F}(\partial_{x_j} u) = i \partial_{\xi_j} (i\xi_j \mathcal{F}(u)) = -\mathcal{F}(u) - \xi_j \partial_{\xi_j} \mathcal{F}(u)$. By integration by parts, 
	\begin{align*}
	\re{ \int |\xi|^{2\gamma} \mathcal{F}(Q) \xi \cdot \nabla_\xi \overline{\mathcal{F}(Q)} d\xi} = (-2\gamma-d) \|Q\|^2_{\dot{H}^\gamma} - \re{ \int |\xi|^{2\gamma} \xi \cdot \nabla_\xi \mathcal{F}(Q) \overline{\mathcal{F}(Q)} d\xi},
	\end{align*}
	or 
	\[
	\re{ \int |\xi|^{2\gamma} \mathcal{F}(Q) \xi \cdot \nabla_\xi \overline{\mathcal{F}(Q)} d\xi} = \Big(-\gamma -\frac{d}{2} \Big) \|Q\|^2_{\dot{H}^\gamma}.
	\]
	This together with $(\ref{pohozaev equation proof})$ shows $(\ref{claim})$, and $(\ref{pohozaev equation 2})$ follows.
	\newline
	\indent The Pohozaev identities $(\ref{pohozaev identities})$ imply in particular that 
	\[
	H(Q)=\|Q\|^{\alpha+2}_{L^{\alpha+2}} \div \left[\|Q\|^\alpha_{\dot{H}^{\Gact}} \|Q\|^2_{\dot{H}^2} \right] = \frac{\alpha+2}{2} \|Q\|^{-\alpha}_{\dot{H}^{\Gact}} = A_{\text{GN}}, \quad E(Q)=0.
	\] 
	Similarly, we have
	\[
	\|R\|^2_{L^{\alphct}} = \frac{\alpha}{2} \|R\|^2_{\dot{H}^2} = \frac{\alpha}{\alpha+2} \|R\|^{\alpha+2}_{L^{\alpha+2}}.
	\]
	In particular,
	\[
	K(R) = \|R\|^{\alpha+2}_{L^{\alpha+2}} \div \left[ \|R\|^\alpha_{L^{\alphct}} \|R\|^2_{\dot{H}^2} \right] = \frac{\alpha+2}{2} \|R\|^{-\alpha}_{L^{\alphct}} = B_{\text{GN}}, \quad E(R) = 0.
	\]
\end{rem}
\begin{defi}[Ground state] \label{definition ground state}
	\begin{itemize}
		\item We call \textbf{Sobolev ground states} the maximizers of $H$ which are solutions to $(\ref{elliptic equation critical sobolev})$. We denote the set of Sobolev ground states by $\mathcal{G}$. 
		\item We call \textbf{Lebesgue ground states} the maximizers of $K$ which are solutions to $(\ref{elliptic equation critical lebesgue})$. We denote the set of Lebesgue ground states by $\mathcal{H}$. 
	\end{itemize}
\end{defi}
Note that by Lemma $\ref{lem maximizer intercritical}$, if $g, h$ are Sobolev and Lebesgue ground states respectively, then 
\[
A_{\text{GN}} = \frac{\alpha+2}{2} \|g\|^{-\alpha}_{\dot{H}^{\Gact}}, \quad B_{\text{GN}} = \frac{\alpha+2}{2} \|h\|^{-\alpha}_{L^{\alphct}}.
\]
This implies that Sobolev ground states have the same $\dot{H}^{\Gact}$-norm, and all Lebesgue ground states have the same $L^{\alphct}$-norm. Denote
\begin{align}
S_{\text{gs}}&:= \|g\|_{\dot{H}^{\Gact}}, \quad \forall g \in \mathcal{G}, \label{critical sobolev norm} \\
L_{\text{gs}}&:= \|h\|_{L^{\alphct}}, \quad \forall h \in \mathcal{H}. \label{critical lebesgue norm} 
\end{align}
In particular, we have the following sharp Gagliardo-Nirenberg inequalities
\begin{align}
\|f\|^{\alpha+2}_{L^{\alpha+2}} &\leq A_{\text{GN}} \|f\|^{\alpha}_{\dot{H}^{\Gact}} \|f\|^2_{\dot{H}^2}, \label{sharp gagliardo-nirenberg inequality intercritical 1}\\
\|f\|^{\alpha+2}_{L^{\alpha+2}} &\leq B_{\text{GN}} \|f\|^{\alpha}_{L^{\alphct}} \|f\|^2_{\dot{H}^2}, \label{sharp gagliardo-nirenberg inequality intercritical 2}
\end{align}
with 
\[
A_{\text{GN}} = \frac{\alpha+2}{2} S_{\text{gs}}^{-\alpha}, \quad B_{\text{GN}} = \frac{\alpha+2}{2} L_{\text{gs}}^{-\alpha}.
\]
We next give another application of the profile decomposition given in Theorem $\ref{theorem profile decomposition intercritical NL4S}$.
\begin{theorem}[Compactness lemma] \label{theorem compactness lemma intercritical NL4S} Let $d\geq 1$ and $2_\star<\alpha<2^\star$. Let $(v_n)_{n\geq 1}$ be a bounded sequence in $\dot{H}^{\Gace} \cap \dot{H}^2$ such that
	\[
	\limsup_{n\rightarrow \infty} \|v_n\|_{\dot{H}^2} \leq M, \quad \limsup_{n\rightarrow \infty} \|v_n\|_{L^{\alpha+2}} \geq m.
	\]
	\begin{itemize}
		\item Then there exists a sequence $(x_n)_{n\geq 1}$ in $\R^d$ such that up to a subsequence,
		\[
		v_n(\cdot + x_n) \rightharpoonup V \text{ weakly in } \dot{H}^{\Gace} \cap \dot{H}^2,
		\]
		for some $V \in \dot{H}^{\Gace} \cap \dot{H}^2$ satisfying
		\begin{align}
		\|V\|^\alpha_{\dot{H}^{\Gace}} \geq \frac{2}{\alpha+2} \frac{ m^{\alpha+2}}{M^2} S_{\emph{gs}}^\alpha. \label{lower bound critical sobolev}
		\end{align}
		\item Then there exists a sequence $(y_n)_{n\geq 1}$ in $\R^d$ such that up to a subsequence,
		\[
		v_n(\cdot + y_n) \rightharpoonup W \text{ weakly in } L^{\alphce} \cap \dot{H}^2,
		\]
		for some $W \in L^{\alphce} \cap \dot{H}^2$ satisfying
		\begin{align}
		\|W\|^\alpha_{L^{\alphce}} \geq \frac{2}{\alpha+2} \frac{ m^{\alpha+2}}{M^2} L_{\emph{gs}}^\alpha. \label{lower bound critical lebesgue}
		\end{align}
	\end{itemize}
\end{theorem}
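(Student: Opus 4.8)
The plan is to run the two assertions in parallel, each time feeding the profile decomposition of Theorem~\ref{theorem profile decomposition intercritical NL4S} into the corresponding sharp Gagliardo-Nirenberg inequality. Applying that theorem to the bounded sequence $(v_n)_{n\geq 1}$ produces, after passing to a subsequence, profiles $(V^j)_{j\geq 1}$ and translation parameters $(x_n^j)_{j\geq 1}$ giving $v_n = \sum_{j=1}^l V^j(\cdot - x_n^j) + v_n^l$, the pairwise orthogonality $(\ref{pairwise orthogonality intercritical})$, the two quadratic identities $(\ref{profile identity 1 intercritical})$--$(\ref{profile identity 2 intercritical})$, and the vanishing $(\ref{profile error intercritical})$ of $\|v_n^l\|_{L^q}$ for $q \in (\alphct, 2+2^\star)$. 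A short computation using $2_\star < \alpha < 2^\star$ shows that $\alpha+2$ lies in this range, so the remainder will not contribute to the $L^{\alpha+2}$-norm.

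First I would decouple the $L^{\alpha+2}$-norm exactly as in the proof of Proposition~\ref{prop variational structure ground state intercritical}: the elementary inequality $(\ref{elementary inequality intercritical})$ together with the orthogonality $(\ref{pairwise orthogonality intercritical})$ forces all mixed terms to vanish as $n \rightarrow \infty$, while $(\ref{profile error intercritical})$ removes $v_n^l$, yielding $\limsup_{n\rightarrow \infty}\|v_n\|_{L^{\alpha+2}}^{\alpha+2} = \sum_{j\geq 1}\|V^j\|_{L^{\alpha+2}}^{\alpha+2}$. The hypothesis then gives $m^{\alpha+2} \leq \sum_{j\geq 1} \|V^j\|_{L^{\alpha+2}}^{\alpha+2}$.

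For the first assertion I would insert the sharp inequality $(\ref{sharp gagliardo-nirenberg inequality intercritical 1})$ into each summand and factor out the largest $\dot{H}^{\Gact}$-factor:
\[
m^{\alpha+2} \leq A_{\text{GN}} \sum_{j\geq 1} \|V^j\|_{\dot{H}^{\Gact}}^\alpha \|V^j\|_{\dot{H}^2}^2 \leq A_{\text{GN}} \Big( \sup_{j\geq 1} \|V^j\|_{\dot{H}^{\Gact}}^\alpha \Big) \sum_{j\geq 1} \|V^j\|_{\dot{H}^2}^2 .
\]
By $(\ref{profile identity 2 intercritical})$ and $\limsup_{n\rightarrow\infty} \|v_n\|_{\dot{H}^2} \leq M$ the last sum is bounded by $M^2$. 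Since $\sum_{j}\|V^j\|_{\dot{H}^{\Gact}}^2$ converges, one has $\|V^j\|_{\dot{H}^{\Gact}} \rightarrow 0$ and the supremum is attained at some index $j_0$; I set $V := V^{j_0}$ and $x_n := x_n^{j_0}$. Using $A_{\text{GN}} = \tfrac{\alpha+2}{2} S_{\text{gs}}^{-\alpha}$ this rearranges to $(\ref{lower bound critical sobolev})$. The required weak convergence $v_n(\cdot + x_n) \rightharpoonup V$ follows by translating the decomposition by $x_n^{j_0}$: the profiles with $j \ne j_0$ drift to infinity by $(\ref{pairwise orthogonality intercritical})$ and hence converge weakly to $0$, the $j_0$-th term is exactly $V$, and the remainder converges weakly to $0$ by construction.

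The second assertion is identical in structure, invoking $(\ref{sharp gagliardo-nirenberg inequality intercritical 2})$ with $\dot{H}^{\Gact}, A_{\text{GN}}, S_{\text{gs}}$ replaced by $L^{\alphct}, B_{\text{GN}}, L_{\text{gs}}$. The supremum $\sup_j \|V^j\|_{L^{\alphct}}$ is again attained because $\dot{H}^{\Gact} \hookrightarrow L^{\alphct}$ gives $\|V^j\|_{L^{\alphct}} \lesssim \|V^j\|_{\dot{H}^{\Gact}} \rightarrow 0$; choosing the maximizing index $j_1$ and setting $W := V^{j_1}$, $y_n := x_n^{j_1}$ produces $(\ref{lower bound critical lebesgue})$, and the same embedding upgrades the weak convergence in $\dot{H}^{\Gact} \cap \dot{H}^2$ to weak convergence in $L^{\alphct} \cap \dot{H}^2$. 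The one point needing genuine care throughout is the decoupling of the $L^{\alpha+2}$-norm, in particular the vanishing of the cross terms; but this is precisely the estimate already carried out for Proposition~\ref{prop variational structure ground state intercritical}, so the present argument is essentially bookkeeping on top of it.
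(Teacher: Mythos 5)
Your proposal is correct and follows essentially the same route as the paper: profile decomposition, decoupling of the $L^{\alpha+2}$-norm via the elementary inequality and pairwise orthogonality, insertion of the sharp Gagliardo--Nirenberg inequality, and extraction of the maximizing profile $V^{j_0}$ with the translated weak limit. The only cosmetic difference is at the final step, where the paper identifies the weak limit of the translated remainder as zero via uniqueness of weak limits and the vanishing of $\limsup_n\|v^l_n\|_{L^{\alpha+2}}$, whereas you argue directly from the construction and the orthogonality of the translation parameters; both are valid.
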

\begin{rem} \label{rem compactness lemma intercritical NL4S}
	The lower bounds $(\ref{lower bound critical sobolev})$ and $(\ref{lower bound critical lebesgue})$ are optimal. In fact, if we take $v_n=Q \in \mathcal{G}$ in the first case and $v_n=R \in \mathcal{H}$ in the second case, then we get the equalities. 
\end{rem}
\noindent \textit{Proof of Theorem $\ref{theorem compactness lemma intercritical NL4S}$.} As in the proof of Proposition $\ref{prop variational structure ground state intercritical}$, we only consider the first case, the second case is similar using the Sobolev embedding $\dot{H}^{\Gact} \hookrightarrow L^{\alphct}$. According to Theorem $\ref{theorem profile decomposition intercritical NL4S}$, there exist a sequence $(V^j)_{j\geq 1}$ of $\dot{H}^{\Gact} \cap \dot{H}^2$ functions and a family $(x^j_n)_{j\geq 1}$ of sequences in $\R^d$ such that up to a subsequence, the sequence $(v_n)_{n\geq 1}$ can be written as
\[
v_n(x) = \sum_{j=1}^l V^j(x-x^j_n) + v^l_n(x),
\]
and $(\ref{profile error intercritical})$, $(\ref{profile identity 1 intercritical}), (\ref{profile identity 2 intercritical})$ hold. This implies that
\begin{align}
m^{\alpha+2} &\leq \limsup_{n\rightarrow \infty} \|v_n\|_{L^{\alpha+2}}^{\alpha+2} = \limsup_{n\rightarrow \infty} \Big\| \sum_{j=1}^l V^j(\cdot -x^j_n) + v^l_n\Big\|^{\alpha+2}_{L^{\alpha+2}} \nonumber \\
&\leq \limsup_{n\rightarrow \infty} \Big( \Big\|\sum_{j=1}^l V^j(\cdot -x^j_n) \Big\|_{L^{\alpha+2}} + \|v^l_n\|_{L^{\alpha+2}}\Big)^{\alpha+2} \nonumber \\
&\leq \limsup_{n\rightarrow \infty} \Big\| \sum_{j=1}^\infty V^j(\cdot -x^j_n)\Big\|_{L^{\alpha+2}}^{\alpha+2}. \label{compactness lemma proof intercritical}
\end{align}
By the elementary inequality $(\ref{elementary inequality intercritical})$ and the pairwise orthogonality $(\ref{pairwise orthogonality intercritical})$, the mixed terms in the sum $(\ref{compactness lemma proof intercritical})$ vanish as $n\rightarrow \infty$. We thus get
\[
m^{\alpha+2} \leq \sum_{j=1}^\infty \|V^j\|_{L^{\alpha+2}}^{\alpha+2}.
\]
We next use the sharp Gagliardo-Nirenberg inequality $(\ref{sharp gagliardo-nirenberg inequality intercritical 1})$ to estimate
\begin{align}
\sum_{j=1}^\infty \|V^j\|^{\alpha+2}_{L^{\alpha+2}} \leq \frac{\alpha+2}{2} \frac{1}{S_{\text{gs}}^\alpha} \sup_{j\geq 1} \|V^j\|^{\alpha}_{\dot{H}^{\Gact}} \sum_{j=1}^\infty \|V^j\|^2_{\dot{H}^2}. \label{compactness lemma proof 1 intercritical}
\end{align}
By $(\ref{profile identity 2 intercritical})$, we infer that
\[
\sum_{j=1}^\infty \|V^j\|^2_{\dot{H}^2}  \leq \limsup_{n\rightarrow \infty} \|v_n\|^2_{\dot{H}^2} \leq M^2.
\]
Therefore,
\[
\sup_{j\geq 1} \|V^j\|^{\alpha}_{\dot{H}^{\Gact}} \geq \frac{2}{\alpha+2} \frac{m^{\alpha+2}}{M^2} S_{\text{gs}}^\alpha.
\]
Since the series $\sum_{j\geq 1} \|V^j\|^2_{\dot{H}^{\Gact}}$ is convergent, the supremum above is attained. In particular, there exists $j_0$ such that
\[
\|V^{j_0}\|^{\alpha}_{\dot{H}^{\Gact}} \geq \frac{2}{\alpha+2} \frac{m^{\alpha+2}}{M^2} S_{\text{gs}}^\alpha.
\]
By a change of variables, we write
\[
v_n(x+ x^{j_0}_n) = V^{j_0} (x) + \sum_{1\leq j \leq l \atop j \ne j_0} V^j(x+ x_n^{j_0} - x^j_n) + \tilde{v}^l_n(x),
\]
where $\tilde{v}^l_n(x):= v^l_n(x+x^{j_0}_n)$. The pairwise orthogonality of the family $(x_n^j)_{j\geq 1}$ implies
\[
V^j( \cdot +x^{j_0}_n -x^j_n) \rightharpoonup 0 \text{ weakly in } \dot{H}^{\Gact} \cap \dot{H}^2,
\]
as $n \rightarrow \infty$ for every $j \ne j_0$. We thus get
\begin{align}
v_n(\cdot + x^{j_0}_n) \rightharpoonup V^{j_0} + \tilde{v}^l, \quad \text{as } n \rightarrow \infty, \label{compactness lemma proof 2 intercritical}
\end{align}
where $\tilde{v}^l$ is the weak limit of $(\tilde{v}^l_n)_{n\geq 1}$. On the other hand, 
\[
\|\tilde{v}^l\|_{L^{\alpha+2}} \leq \limsup_{n\rightarrow \infty} \|\tilde{v}^l_n\|_{L^{\alpha+2}} = \limsup_{n\rightarrow \infty} \|v^l_n\|_{L^{\alpha+2}} \rightarrow 0, \quad \text{as } l \rightarrow \infty. 
\]
By the uniqueness of the weak limit $(\ref{compactness lemma proof 2 intercritical})$, we get $\tilde{v}^l=0$ for every $l \geq j_0$. Therefore, we obtain 
\[
v_n(\cdot + x^{j_0}_n) \rightharpoonup V^{j_0}.
\]
The sequence $(x^{j_0}_n)_{n\geq 1}$ and the function $V^{j_0}$ now fulfill the conditions of Theorem $\ref{theorem compactness lemma intercritical NL4S}$. The proof is complete.
\defendproof 
\section{Global existence and blowup} \label{section global existence blowup}
We firstly use the sharp Gagliardo-Nirenberg inequality $(\ref{sharp gagliardo-nirenberg inequality intercritical 1})$ to show the following global existence. 
\begin{prop}[Global existence in $\dot{H}^{\Gace} \cap \dot{H}^2$] \label{prop global existence intercritical NL4S 1}
	Let $d\geq 5$ and $2_\star<\alpha<2^\star$. Let $u_0 \in \dot{H}^{\Gact} \cap \dot{H}^2$ and the corresponding solution $u$ to $(\ref{intercritical NL4S})$ defined on the maximal time $[0,T)$. Assume that
	\begin{align}
	\sup_{t\in [0,T)} \|u(t)\|_{\dot{H}^{\Gact}} < S_{\emph{gs}}. \label{assumption boundedness H dot gamma}
	\end{align}
	Then $T=\infty$, i.e. the solution exists globally in time.
\end{prop}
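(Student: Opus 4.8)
The plan is to combine the conservation of energy with the sharp Gagliardo--Nirenberg inequality $(\ref{sharp gagliardo-nirenberg inequality intercritical 1})$ to extract an a priori bound on $\|u(t)\|_{\dot{H}^2}$, and then to invoke the blowup alternative of Proposition $\ref{prop local well-posedness H dot 2}$. Recall from that proposition that the $\dot{H}^{\Gact}\cap\dot{H}^2$ solution conserves energy, so $E(u(t))=E(u_0)$ for all $t\in[0,T)$, and that if $T<\infty$ then necessarily $\|u(t)\|_{\dot{H}^{\Gact}}+\|u(t)\|_{\dot{H}^2}\to\infty$ as $t\uparrow T$. Since the hypothesis $(\ref{assumption boundedness H dot gamma})$ already controls the $\dot{H}^{\Gact}$-norm uniformly in time, it will suffice to show that $\|u(t)\|_{\dot{H}^2}$ stays bounded on $[0,T)$.

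First I would estimate the nonlinear part of the energy. Applying $(\ref{sharp gagliardo-nirenberg inequality intercritical 1})$ together with the identity $A_{\text{GN}}=\frac{\alpha+2}{2}S_{\text{gs}}^{-\alpha}$ yields, for each $t\in[0,T)$,
\begin{align*}
E(u(t)) &= \frac{1}{2}\|u(t)\|_{\dot{H}^2}^2 - \frac{1}{\alpha+2}\|u(t)\|_{L^{\alpha+2}}^{\alpha+2} \\
&\geq \frac{1}{2}\|u(t)\|_{\dot{H}^2}^2\left(1 - \frac{\|u(t)\|_{\dot{H}^{\Gact}}^\alpha}{S_{\text{gs}}^\alpha}\right).
\end{align*}
Here the strictness of $(\ref{assumption boundedness H dot gamma})$ is exactly what is needed: setting $\rho:=\big(\sup_{t\in[0,T)}\|u(t)\|_{\dot{H}^{\Gact}}\big/S_{\text{gs}}\big)^\alpha$, the hypothesis guarantees $\rho<1$, so that $1-\|u(t)\|_{\dot{H}^{\Gact}}^\alpha/S_{\text{gs}}^\alpha\geq 1-\rho>0$ uniformly in $t$.

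Combining this with the conservation of energy then gives
\[
\tfrac{1}{2}(1-\rho)\,\|u(t)\|_{\dot{H}^2}^2 \leq E(u(t)) = E(u_0),
\]
whence $\|u(t)\|_{\dot{H}^2}^2\leq \frac{2E(u_0)}{1-\rho}$ for every $t\in[0,T)$ (in particular $E(u_0)\geq 0$ is automatic). Together with $(\ref{assumption boundedness H dot gamma})$ this makes $\sup_{t\in[0,T)}\big(\|u(t)\|_{\dot{H}^{\Gact}}+\|u(t)\|_{\dot{H}^2}\big)$ finite, which by the blowup alternative is incompatible with $T<\infty$; hence $T=\infty$. I do not anticipate a genuine obstacle: the argument is the standard energy--variational dichotomy, and the only point that requires care is the uniform gap $1-\rho>0$, which relies precisely on the strict inequality in $(\ref{assumption boundedness H dot gamma})$ and on the sharp value of the Gagliardo--Nirenberg constant obtained in Proposition $\ref{prop variational structure ground state intercritical}$.
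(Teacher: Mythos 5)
Your argument is correct and coincides with the paper's own proof: both bound the energy from below via the sharp Gagliardo--Nirenberg inequality $(\ref{sharp gagliardo-nirenberg inequality intercritical 1})$, use conservation of energy and the strict inequality $(\ref{assumption boundedness H dot gamma})$ to obtain a uniform bound on $\|u(t)\|_{\dot{H}^2}$, and conclude by the blowup alternative of Proposition $\ref{prop local well-posedness H dot 2}$. Your explicit introduction of the gap $1-\rho>0$ is just a slightly more careful write-up of the same step.
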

\begin{proof}
	By the sharp Gagliardo-Nirenberg inequality $(\ref{sharp gagliardo-nirenberg inequality intercritical 1})$, we bound
	\begin{align*}
	E(u(t)) &=\frac{1}{2} \|u(t)\|^2_{\dot{H}^2} -\frac{1}{\alpha+2} \|u(t)\|^{\alpha+2}_{L^{\alpha+2}} \\
	&\geq \frac{1}{2} \left( 1- \Big( \frac{\|u(t)\|_{\dot{H}^{\Gact}}}{S_{\text{gs}}}\Big)^\alpha\right) \|u(t)\|^2_{\dot{H}^2}.
	\end{align*}
	Thanks to the conservation of energy and the assumption $(\ref{assumption boundedness H dot gamma})$, we obtain $\sup_{t\in [0,T)} \|u(t)\|_{\dot{H}^2} <\infty$. By the blowup alternative given in Proposition $\ref{prop local well-posedness H dot 2}$ and $(\ref{assumption boundedness H dot gamma})$, the solution exists globally in time. The proof is complete.
\end{proof}
We also have the following global well-posedness result.
\begin{prop} \label{prop global existence intercritical NL4S 2} 
	Let $d\geq 5$ and $2_\star<\alpha<2^\star$. Let $u_0 \in \dot{H}^{\Gact} \cap \dot{H}^2$ and the corresponding solution $u$ to $(\ref{intercritical NL4S})$ defined on the maximal time $[0,T)$. Assume that
	\begin{align}
	S_{\emph{gs}} \leq \sup_{t\in [0,T)} \|u(t)\|_{\dot{H}^{\Gact}} < \infty, \quad \sup_{t\in [0,T)} \|u(t)\|_{L^{\alphce}} < L_{\emph{gs}}. \label{assumption boundedness H alpha}
	\end{align}
	Then $T=\infty$, i.e. the solution exists globally in time.
\end{prop}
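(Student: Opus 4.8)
The plan is to mimic the proof of Proposition \ref{prop global existence intercritical NL4S 1}, but to replace the $\dot{H}^{\Gact}$-based sharp Gagliardo--Nirenberg inequality $(\ref{sharp gagliardo-nirenberg inequality intercritical 1})$ by the $L^{\alphct}$-based one $(\ref{sharp gagliardo-nirenberg inequality intercritical 2})$. The reason is that here $\|u(t)\|_{\dot{H}^{\Gact}}$ is no longer assumed to stay below $S_{\text{gs}}$, so the potential energy must instead be controlled through the $L^{\alphct}$-norm, which by the second hypothesis in $(\ref{assumption boundedness H alpha})$ remains strictly below $L_{\text{gs}}$.

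First I would substitute $(\ref{sharp gagliardo-nirenberg inequality intercritical 2})$ together with $B_{\text{GN}} = \frac{\alpha+2}{2} L_{\text{gs}}^{-\alpha}$ into the energy to obtain, for every $t \in [0,T)$,
\[
E(u(t)) = \frac{1}{2}\|u(t)\|^2_{\dot{H}^2} - \frac{1}{\alpha+2}\|u(t)\|^{\alpha+2}_{L^{\alpha+2}} \geq \frac{1}{2}\left(1 - \Big(\frac{\|u(t)\|_{L^{\alphct}}}{L_{\text{gs}}}\Big)^\alpha\right)\|u(t)\|^2_{\dot{H}^2}.
\]
Setting $L_0 := \sup_{t\in[0,T)}\|u(t)\|_{L^{\alphct}}$, the strict inequality $L_0 < L_{\text{gs}}$ yields $c := 1 - (L_0/L_{\text{gs}})^\alpha > 0$, so the bracketed factor is bounded below by $c$ uniformly in $t$, giving $E(u(t)) \geq \frac{c}{2}\|u(t)\|^2_{\dot{H}^2}$. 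By conservation of energy (Proposition \ref{prop local well-posedness H dot 2}) this forces $\|u(t)\|^2_{\dot{H}^2} \leq \frac{2}{c}E(u_0)$, hence $\sup_{t\in[0,T)}\|u(t)\|_{\dot{H}^2} < \infty$.

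Unlike in Proposition \ref{prop global existence intercritical NL4S 1}, the boundedness of $\|u(t)\|_{\dot{H}^{\Gact}}$ is not a by-product but is supplied directly by the first hypothesis in $(\ref{assumption boundedness H alpha})$. Since both the $\dot{H}^{\Gact}$- and $\dot{H}^2$-norms then stay bounded on $[0,T)$, the blowup alternative of Proposition \ref{prop local well-posedness H dot 2} rules out finite-time blowup, so $T=\infty$. There is no genuine analytic obstacle here: the whole argument reduces to inserting the correct Gagliardo--Nirenberg inequality, and the only point requiring care is to observe that the $L^{\alphct}$-hypothesis is used purely to control $\|u(t)\|_{\dot{H}^2}$, the $\dot{H}^{\Gact}$-control being assumed outright.
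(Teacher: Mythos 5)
Your proposal is correct and coincides with the paper's argument: the paper states that Proposition \ref{prop global existence intercritical NL4S 2} is proved exactly like Proposition \ref{prop global existence intercritical NL4S 1}, but with the sharp Gagliardo--Nirenberg inequality $(\ref{sharp gagliardo-nirenberg inequality intercritical 2})$ in place of $(\ref{sharp gagliardo-nirenberg inequality intercritical 1})$, which is precisely what you do. Your added remark that the $\dot{H}^{\Gact}$-bound is here an assumption rather than a by-product, and that both bounds are needed for the blowup alternative, is an accurate filling-in of the details the paper leaves implicit.
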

The proof is similar to the one of Proposition $\ref{prop global existence intercritical NL4S 1}$ by using the shap Gagliardo-Nirenberg inequality $(\ref{sharp gagliardo-nirenberg inequality intercritical 2})$. \newline
\indent We next recall blowup criteria for $H^2$ solutions to the equation $(\ref{intercritical NL4S})$ due to \cite{BoulengerLenzmann}. 
\begin{prop} [Blowup in $H^2$ \cite{BoulengerLenzmann}] \label{prop blowup H2}
	Let $d \geq 2$, $ 2_\star<\alpha <2^\star$, $\alpha \leq 8$ and $u_0 \in H^2$ be radial. Assume that 
	\[
	E(u_0) M(u_0)^\sigma < E(Q) M(Q)^\sigma, \quad \|u_0\|_{\dot{H}^2} \|u_0\|^\sigma_{L^2} > \|Q\|_{\dot{H}^2} \|Q\|^\sigma_{L^2},
	\]
	where 
	\begin{align}
	\sigma :=\frac{2-\Gace}{\Gact} = \frac{8-(d-4)\alpha}{d\alpha-8}. \label{define sigma}
	\end{align}
	Then the corresponding solution $u$ to $(\ref{intercritical NL4S})$ blows up in finite time. 
\end{prop}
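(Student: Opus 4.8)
Proposition \ref{prop blowup H2} is the blowup criterion of Boulenger--Lenzmann \cite{BoulengerLenzmann}, and the plan is to reproduce their localized virial argument, whose three ingredients are a sharp Gagliardo--Nirenberg inequality tied to the conserved quantities, a continuity (``trapping'') argument, and a localized virial estimate for the biharmonic flow. First I would record the Gagliardo--Nirenberg inequality in the form adapted to the conservation laws,
\[
\|f\|^{\alpha+2}_{L^{\alpha+2}} \leq C_{\mathrm{GN}} \|f\|^{\alpha+2-\frac{d\alpha}{4}}_{L^2}\|f\|^{\frac{d\alpha}{4}}_{\dot{H}^2},
\]
whose maximizer is the ground state $Q$ solving $\Delta^2 Q + Q - |Q|^\alpha Q =0$, together with the Pohozaev identities for $Q$ (obtained by the same multiplier computation as in Remark \ref{rem variational analysis intercritical}, now with the extra $L^2$-term), which pin down $C_{\mathrm{GN}}$ in terms of $\|Q\|_{L^2}$ and $\|Q\|_{\dot{H}^2}$. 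The exponent $\sigma$ of \eqref{define sigma} is chosen so that both $E(u)M(u)^\sigma$ and $\|u\|_{\dot{H}^2}\|u\|^\sigma_{L^2}$ are invariant under the scaling of \eqref{intercritical NL4S}; this is what makes the comparison with $Q$ meaningful.

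The second step is the trapping argument. Set $g(t):=\|u(t)\|_{\dot{H}^2}\|u(t)\|^\sigma_{L^2}$ and insert the sharp Gagliardo--Nirenberg inequality into $E(u(t))=\tfrac12\|u(t)\|^2_{\dot{H}^2}-\tfrac{1}{\alpha+2}\|u(t)\|^{\alpha+2}_{L^{\alpha+2}}$. Using conservation of mass and energy, one bounds $E(u_0)M(u_0)^\sigma$ below by $\Phi(g(t))$ for a fixed function $\Phi$ whose graph, after normalization by $E(Q)M(Q)^\sigma$, attains its maximum value $1$ exactly at $g=\|Q\|_{\dot{H}^2}\|Q\|^\sigma_{L^2}$. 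Since the first hypothesis gives $E(u_0)M(u_0)^\sigma<E(Q)M(Q)^\sigma$ and the second places $g(0)$ strictly on the super-critical branch $g(0)>\|Q\|_{\dot{H}^2}\|Q\|^\sigma_{L^2}$, a continuity argument in $t$ shows that $g(t)$ cannot cross the threshold, so
\[
\|u(t)\|_{\dot{H}^2}\|u(t)\|^\sigma_{L^2} \geq (1+\delta_0)\,\|Q\|_{\dot{H}^2}\|Q\|^\sigma_{L^2}
\]
for some $\delta_0>0$ and all $t\in[0,T)$; in particular $\inf_{t}\|u(t)\|_{\dot{H}^2}>0$.

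The third step is the virial estimate. Fix a radial cutoff $\varphi_R$ with $\varphi_R(x)=|x|^2$ for $|x|\leq R$ and $\varphi_R$ constant for $|x|\geq 2R$, and set $\mathcal{M}_{\varphi_R}[u]:=2\,\mathrm{Im}\int \bar{u}\,\nabla\varphi_R\cdot\nabla u\,dx$. Differentiating along the flow, the leading part (coming from $\varphi_R=|x|^2$) is a fixed combination $a\|u\|^2_{\dot{H}^2}-b\|u\|^{\alpha+2}_{L^{\alpha+2}}$; eliminating $\|u\|^{\alpha+2}_{L^{\alpha+2}}$ through the energy and using $\alpha>2_\star$ makes the coefficient of $\|u\|^2_{\dot{H}^2}$ negative, while the trapping bound together with the strict energy gap turns this into an estimate $\leq -c\,\|u(t)\|^2_{\dot{H}^2}$ with $c>0$. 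The remainder $\mathcal{R}_R$ is supported in $|x|\geq R$; using $d\geq 2$, radial symmetry and the radial Sobolev (Strauss) estimate $|u(x)|\lesssim |x|^{-(d-1)/2}\|u\|_{L^2}^{1/2}\|\nabla u\|_{L^2}^{1/2}$, its nonlinear part is controlled by $R^{-\alpha(d-1)/2}\|u\|^{\alpha/4}_{\dot{H}^2}$ up to mass factors, whose $\dot{H}^2$-power $\alpha/4$ is $\leq 2$ precisely because $\alpha\leq 8$, and the quadratic remainder terms carry negative powers of $R$ as well. Hence for $R$ large the error is absorbed and
\[
\frac{d}{dt}\mathcal{M}_{\varphi_R}[u(t)] \leq -\tilde c\,\|u(t)\|^2_{\dot{H}^2}, \qquad \tilde c>0,
\]
for all $t\in[0,T)$. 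Finally I would convert this into finite-time blowup: Cauchy--Schwarz, interpolation and mass conservation give $|\mathcal{M}_{\varphi_R}[u(t)]|\leq C_R\|u(t)\|^{1/2}_{\dot{H}^2}$, so $\|u\|^2_{\dot{H}^2}\geq C_R^{-4}\mathcal{M}_{\varphi_R}[u]^4$. Since $\mathcal{M}_{\varphi_R}$ is strictly decreasing (by the last display and $\inf_t\|u\|_{\dot H^2}>0$) it becomes negative at some finite time, after which these two facts yield a Riccati-type inequality $\frac{d}{dt}\mathcal{M}_{\varphi_R}[u]\leq -c_R\,\mathcal{M}_{\varphi_R}[u]^4$ forcing $\mathcal{M}_{\varphi_R}[u(t)]\to-\infty$, hence $\|u(t)\|_{\dot{H}^2}\to\infty$, in finite time; this contradicts $T=\infty$. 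The main obstacle is the third step: bounding the fourth-order remainder $\mathcal{R}_R$ of the localized virial identity, where radiality and the restriction $\alpha\leq 8$ are exactly what keep the error subordinate to $-c\|u\|^2_{\dot{H}^2}$. This delicate estimate is the technical heart carried out in \cite{BoulengerLenzmann}.
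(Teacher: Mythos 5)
The paper itself offers no proof of this proposition: it is recalled from \cite{BoulengerLenzmann} and used as a black box, the only virial-type argument actually carried out in the paper being the proof of Proposition \ref{prop blowup H dot 2}, which runs the same localized machinery under a negative-energy hypothesis and the uniform $\dot{H}^{\Gact}$ bound in place of mass conservation. Your sketch is a faithful and essentially correct reconstruction of the Boulenger--Lenzmann argument: the mass--energy Gagliardo--Nirenberg inequality with $\dot{H}^2$-exponent $\frac{d\alpha}{4}$ is the right one; $\sigma$ in \eqref{define sigma} is indeed the unique exponent making $E(u)M(u)^\sigma$ and $\|u\|_{\dot{H}^2}\|u\|_{L^2}^\sigma$ scale-invariant; the trapping argument and the sign $-(d\alpha-8)<0$ of the $\|\Delta u\|_{L^2}^2$-coefficient (from $\alpha>2_\star$) are correct; the identification of the constraint $\alpha\le 8$ with the Strauss-type bound $\|\Delta u\|_{L^2}^{\alpha/4}$ on annuli is exactly the mechanism appearing in the paper's own estimate \eqref{estimate on annulus}; and the closing Riccati argument via $|M_{\varphi_R}|\lesssim R\|u\|_{L^2}^{3/2}\|\Delta u\|_{L^2}^{1/2}$ is the standard one. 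Two remarks. First, you explicitly defer the fourth-order remainder estimate of the localized virial identity to \cite{BoulengerLenzmann}, so what you have is a correct roadmap with the hardest step outsourced --- which is no worse than what the paper does, but it is not a self-contained proof. Second, a point worth making explicit: the $Q$ in the statement must be the mass--energy ground state solving $\Delta^2Q+Q-|Q|^\alpha Q=0$ (the optimizer of your first display), \emph{not} the Sobolev ground state of Proposition \ref{prop variational structure ground state intercritical} solving $\Delta^2Q+(-\Delta)^{\Gact}Q-|Q|^\alpha Q=0$; your sketch uses the correct one, and this quietly resolves an ambiguity in the paper's own statement.
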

\begin{rem} \label{rem blowup H2}
	\begin{itemize}
		\item The restriction $\alpha \leq 8$ comes from the radial Sobolev embedding (or Strauss's inequality). An analogous restriction on $\alpha$ appears in the blowup of $H^1$ solutions for the nonlinear Schr\"odinger equation.
		\item Note that if $E(u_0)<0$, then the assumption $E(u_0) M(u_0)^\sigma < E(Q) M(Q)^\sigma$ holds trivially. 
	\end{itemize}
\end{rem}
If we assume $u_0 \in \dot{H}^{\Gact} \cap \dot{H}^2$, then the above blowup criteria does not hold due to the lack of mass conservation. Nevertheless, we have the following blowup criteria for initial data in $\dot{H}^{\Gact} \cap \dot{H}^2$. 
\begin{prop} [Blowup in $\dot{H}^{\Gace} \cap \dot{H}^2$] \label{prop blowup H dot 2}
	Let $d\geq 5$, $2_\star<\alpha <2^\star$, $\alpha< 4$ and $u_0 \in\dot{H}^{\Gace} \cap \dot{H}^2$ be radial satisfying $E(u_0)<0$. Assume that the corresponding solution $u$ to $(\ref{intercritical NL4S})$ defined on a maximal interval $[0,T)$ satisfies 
	\begin{align}
	\sup_{t\in [0,T)} \|u(t)\|_{\dot{H}^{\Gact}} <\infty. \label{uniform bounded assumption}
	\end{align}
	Then the solution $u$ to $(\ref{intercritical NL4S})$ blows up in finite time. 
\end{prop}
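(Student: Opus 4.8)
The plan is to adapt the localized virial (Morawetz) argument of Boulenger--Lenzmann \cite{BoulengerLenzmann} to the present setting, the essential new difficulty being that \emph{mass is not conserved}: every place where their proof invokes $\|u\|_{L^2}$ must be rerouted through the \emph{a priori} bound $(\ref{uniform bounded assumption})$, say $A:=\sup_{t\in[0,T)}\|u(t)\|_{\dot{H}^{\Gact}}<\infty$, together with the embedding $\dot{H}^{\Gact}\hookrightarrow L^{\alphct}$. Recall first that radiality of $u_0$ is preserved by the flow, so radial inequalities apply at each time. Fix a radial cutoff $\varphi_R(x)=R^2\varphi(x/R)$, with $\varphi(r)=r^2$ for $r\le1$, $\varphi$ constant for $r\ge10$ and $\varphi''\le2$, and set
\[
\mathcal M_R(t):=2\,\im{\int \overline{u(t)}\,\nabla\varphi_R\cdot\nabla u(t)\,dx}.
\]
Before differentiating I would record that $\mathcal M_R$ is controlled \emph{sub}-quadratically by the top-order norm. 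Since $\nabla\varphi_R$ is supported in $\{|x|\lesssim R\}$, H\"older and $\dot{H}^{\Gact}\hookrightarrow L^{\alphct}$ give $\|u\|_{L^2(|x|\lesssim R)}\lesssim R^{\Gact}\|u\|_{\dot{H}^{\Gact}}\le R^{\Gact}A$, while a local Gagliardo--Nirenberg inequality on the ball $B_{CR}$ bounds $\|\nabla u\|_{L^2(B_{CR})}$ by a power of $\|\Delta u\|_{L^2}$ strictly below one; hence $|\mathcal M_R(t)|\lesssim_{R,A}1+\|u(t)\|_{\dot{H}^2}^{\vartheta}$ for some $\vartheta<2$.

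Next I would compute $\tfrac{d}{dt}\mathcal M_R$ using $(\ref{intercritical NL4S})$ and repeated integration by parts. On $\{|x|\le R\}$, where $\varphi_R(x)=|x|^2$, this reproduces the exact (unlocalized) biharmonic virial identity, so that, after inserting the conserved energy $E(u_0)$,
\[
\frac{d}{dt}\mathcal M_R(t)\le 2d\alpha\,E(u_0)-(d\alpha-8)\,\|\Delta u(t)\|_{L^2}^2+\mathcal A_R(t),
\]
where $\mathcal A_R(t)$ gathers all error terms, supported in $\{|x|\gtrsim R\}$, that arise because $\varphi_R\ne|x|^2$ there. Since $2_\star<\alpha$ is equivalent to $d\alpha>8$, the coefficient $-(d\alpha-8)$ is strictly negative, and $E(u_0)<0$ makes the first term a fixed negative constant.

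The crux is the estimate of $\mathcal A_R$. The kinetic errors carry factors $\partial^k\varphi_R=O(R^{2-k})$, $2\le k\le4$, localized to $\{|x|\gtrsim R\}$; by interpolation and the bound $A$ they are $\le\tfrac14(d\alpha-8)\|\Delta u\|_{L^2}^2+C(R,A)$ once $R$ is large. The genuinely delicate piece is the nonlinear error, of size $\lesssim\int_{|x|\gtrsim R}|u|^{\alpha+2}\,dx$, which I would control by the radial Sobolev inequality (valid since $d\ge5$): splitting $|u|^{\alpha+2}=|u|^{\alpha+2-\alphct}\,|u|^{\alphct}$ and using the pointwise decay $|u(x)|\lesssim|x|^{-(d-4)/2}\|u\|_{\dot{H}^2}$ for the first factor together with $\dot{H}^{\Gact}\hookrightarrow L^{\alphct}$ for the second produces a negative power of $R$ times a power of $\|\Delta u\|_{L^2}$ that is kept strictly below $2$. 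It is precisely here that $\alpha<4$ enters --- the analogue of, but more restrictive than, the condition $\alpha\le8$ of Proposition $\ref{prop blowup H2}$ --- this extra restriction being the price of trading the conserved mass for the weaker $\dot{H}^{\Gact}$ control. Thus for $R$ large $\mathcal A_R$ is absorbed and
\[
\frac{d}{dt}\mathcal M_R(t)\le -\tfrac12(d\alpha-8)\,\|\Delta u(t)\|_{L^2}^2<0 .
\]

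Finally I would close by contradiction. Suppose $T=\infty$. The last inequality shows $\mathcal M_R$ is strictly decreasing, so it becomes and stays negative; combining it with the sub-quadratic bound $|\mathcal M_R|\lesssim_{R,A}1+\|\Delta u\|_{L^2}^{\vartheta}$ gives $\|\Delta u\|_{L^2}^2\gtrsim_{R,A}|\mathcal M_R|^{2/\vartheta}$ when $|\mathcal M_R|$ is large, turning the differential inequality into a Riccati-type estimate $\tfrac{d}{dt}\mathcal M_R\le -c\,|\mathcal M_R|^{2/\vartheta}$ with $2/\vartheta>1$, which forces $\mathcal M_R(t)\to-\infty$ in finite time --- impossible for a globally regular solution. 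By the blowup alternative of Proposition $\ref{prop local well-posedness H dot 2}$ we conclude $T<\infty$. The main obstacle, pervading every step, is the loss of mass conservation: one must check that the powers of $R$ introduced when $\|u\|_{L^2}$ is replaced via $(\ref{uniform bounded assumption})$ and $\dot{H}^{\Gact}\hookrightarrow L^{\alphct}$ do not destroy the smallness needed to absorb $\mathcal A_R$, and that the top-order norm $\|\Delta u\|_{L^2}$ enters both $\mathcal A_R$ and $\mathcal M_R$ with exponents strictly less than $2$.
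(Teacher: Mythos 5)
Your overall strategy coincides with the paper's: the same localized virial functional $M_{\varphi_R}$, the same sub-quadratic a priori bound on it obtained from the local estimate $\|u\|_{L^2(|x|\lesssim R)}\lesssim R^{\Gact}\|u\|_{\dot H^{\Gact}}$, the same reduction to $M_{\varphi_R}'\leq 4d\alpha E(u_0)-2(d\alpha-8)\|\Delta u\|_{L^2}^2+\text{(localized errors)}$, and the same Riccati-type conclusion. The one place where you genuinely diverge is the exterior nonlinear term $\int_{|x|>R}|u|^{\alpha+2}\,dx$. The paper follows Merle--Rapha\"el: it decomposes $\{|x|>R\}$ into dyadic annuli and on each annulus applies the Strauss inequality $\sup|u|\lesssim r^{-(d-1)/2}\|\nabla u\|_{L^2}^{1/2}\|u\|_{L^2}^{1/2}$ together with the local $L^2$ bound, ending with the power $\|\Delta u\|_{L^2}^{\alpha/4}$; this is precisely where $\alpha<4$ is used (both to have $\vartheta>0$ in $(\ref{estimate on annulus})$ and to apply Young with exponent $4/\alpha>1$). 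You instead split $|u|^{\alpha+2}=|u|^{\alpha+2-\alphct}\,|u|^{\alphct}$ and invoke the pointwise radial decay $|u(x)|\lesssim|x|^{-(d-4)/2}\|u\|_{\dot H^2}$ plus $\dot H^{\Gact}\hookrightarrow L^{\alphct}$. This does work: $\alpha+2-\alphct=2-\tfrac{(d-4)\alpha}{4}\in(0,2)$ throughout the range $2_\star<\alpha<2^\star$, so Young's inequality absorbs the term into $\eps\|\Delta u\|_{L^2}^2+C(\eps)R^{-\text{pos}}$. Note, however, that your own estimate nowhere uses $\alpha<4$; your remark that ``$\alpha<4$ enters precisely here'' describes the paper's annulus argument, not yours. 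If carried out carefully, your variant would actually dispense with the restriction $\alpha<4$, the trade-off being that you do not localize the $\dot H^2$ norm to $\{|x|>R\}$ (harmless for the absorption).

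One step needs repair. You assert that all kinetic errors carry factors $\partial^k\varphi_R=O(R^{2-k})$, $2\leq k\leq4$, and are absorbed ``once $R$ is large.'' For $k=2$ the factor is $O(1)$, and the corresponding contribution $8\sum_{j,k,l}\int\partial^2_{jk}\varphi_R\,\re{(\partial^2_{lj}\overline{u}\,\partial^2_{kl}u)}\,dx$, together with the leftover $-16\|\Delta u\|^2_{L^2(|x|>R)}$ from completing the exact virial identity, is of the \emph{same order} as the good term $-2(d\alpha-8)\|\Delta u\|^2_{L^2}$; it does not shrink as $R\to\infty$ and cannot be absorbed by enlarging $R$. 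It must instead be shown to be nonpositive, using $\varphi''_R\leq2$, $\varphi'_R/r\leq2$ and the radial identity $\|\Delta u\|^2_{L^2}=\int|\partial^2_ru|^2+\tfrac{d-1}{r^2}|\partial_ru|^2\,dx$, which is exactly how the paper (following Boulenger--Lenzmann) treats it. Your hypothesis $\varphi''\leq2$ is what makes this sign argument available, so the fix is routine, but as written the absorption claim for the second-order term is false and should be replaced by the convexity argument.
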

\begin{proof}
	Let $\theta: [0,\infty) \rightarrow [0,\infty)$ be a smooth function such that
	\[
	\theta(r) = \left\{  
	\begin{array}{cl}
	r^2 &\text{if } r \leq 1, \\
	0 &\text{if } r \geq 2, 
	\end{array}
	\right.
	\quad \text{and} \quad \theta''(r) \leq 2 \text{ for } r\geq 0. 
	\]
	For $R>0$ given, we define the radial function $\varphi_R: \R^d \rightarrow \R$ by 
	\begin{align}
	\varphi_R(x) = \varphi_R(r) := R^2 \theta(r/R), \quad |x|=r. 
	\end{align}
	By definition, we have
	\[
	2-\varphi''_R(r) \geq 0, \quad  2-\frac{\varphi'_R(r)}{r} \geq 0, \quad 2d -\Delta \varphi_R(x) \geq 0, \quad \text{for all } r \geq 0 \text{ and all } x \in \R^d,
	\]
	and
	\[
	\|\nabla^j \varphi_R \|_{L^\infty} \lesssim R^{2-j}, \quad \text{for } j=0, \cdots, 6,
	\]
	and also,
	\[
	\text{supp}(\nabla^j \varphi_R) \subset \left\{ 
	\begin{array}{cl}
	\{ |x| \leq 2R \} &\text{for } j=1,2, \\
	\{ R \leq |x| \leq 2R \} &\text{for } j=3, \cdots, 6.
	\end{array}
	\right.
	\]
	Let $u \in \dot{H}^{\Gact} \cap \dot{H}^2$ be a solution to $(\ref{intercritical NL4S})$. We define the localized virial action associated to $(\ref{intercritical NL4S})$ by
	\begin{align}
	M_{\varphi_R}(t) := 2 \int \nabla \varphi_R(x) \cdot \im{(\overline{u}(t, x) \nabla u(t,x))} dx. \label{virial action}
	\end{align}
	We firstly show that $M_{\varphi_R}(t)$ is well-defined. To do so, we need the following estimate
	\begin{align}
	\|u\|_{L^2(|x| \lesssim R)} \lesssim R^{\Gact} \|u\|_{L^{\alphct}(|x| \lesssim R)} \lesssim R^{\Gact} \|u\|_{\dot{H}^{\Gact}(|x| \lesssim R)}, \label{holder inequality}
	\end{align}
	which follows easily by H\"older's inequality and the Sobolev embedding. Here $\Gact$ and $\alphct$ are given in $(\ref{critical sobolev exponent})$ and $(\ref{critical lebesgue exponent})$ respectively. Since $\nabla \varphi_R$ is supported in $|x| \lesssim R$, the H\"older inequality together with $(\ref{holder inequality})$ imply
	\begin{align*}
	|M_{\varphi_R}(t)| &\lesssim \|\nabla \varphi_R\|_{L^\infty} \|u(t)\|_{L^2(|x| \lesssim R)} \|\nabla u(t)\|_{L^2(|x| \lesssim R)} \\
	&\lesssim \|\nabla \varphi_R\|_{L^\infty} \|u(t)\|^{3/2}_{L^2(|x| \lesssim R)} \|\Delta u(t)\|^{1/2}_{L^2(|x| \lesssim R)} \\
	&\lesssim R^{3\Gact/2} \|\nabla \varphi_R\|_{L^\infty} \|u(t)\|^{3/2}_{\dot{H}^{\Gact}(|x| \lesssim R)} \|u(t)\|^{1/2}_{\dot{H}^2(|x| \lesssim R)}.
	\end{align*}
	Note that in the case $\theta(r) = r^2$ and $\varphi_R(x)=|x|^2$, we have formally the virial law (see e.g. \cite{BoulengerLenzmann}):
	\begin{align}
	\begin{aligned}
	M'_{|x|^2}(t)=\frac{d}{dt} \Big( 4 \int x \cdot \im{( \overline{u}(t,x) \nabla u(t,x))} dx \Big) &= 16 \|\Delta u(t)\|^2_{L^2} - \frac{4d\alpha}{\alpha+2} \|u(t)\|^{\alpha+2}_{L^{\alpha+2}}  \\
	&= 4d\alpha E(u(t)) -2(d\alpha-8) \|\Delta u(t)\|^2_{L^2}. 
	\end{aligned}
	\label{virial law}
	\end{align}
	We have the following variation rate of the virial action (see e.g. \cite[Lemma 3.1]{BoulengerLenzmann} or \cite[Proposition 3.1]{MiaoWuZhang}):
	\begin{align}
	M'_{\varphi_R}(u(t)) &= \int \Delta^3 \varphi_R |u|^2 dx - 4 \sum_{j,k} \int \partial^2_{jk} \Delta \varphi_R \re{(\partial_j \overline{u} \partial_k u)} dx + 8 \sum_{j,k,l} \int \partial^2_{jk} \varphi_R \re{(\partial^2_{lj} \overline{u} \partial^2_{kl} u) } dx \nonumber \\
	& \mathrel{\phantom{=  \int \Delta^3 \varphi_R |u|^2 dx }} - 2 \int \Delta^2 \varphi_R |\nabla u|^2 dx -\frac{2\alpha}{\alpha+2} \int \Delta \varphi_R |u|^{\alpha+2} dx. \label{variation rate}
	\end{align}
	Since $\varphi_R(x) = |x|^2$ for $|x| \leq R$, we use $(\ref{virial law})$ to have
	\begin{align*}
	M'_{\varphi_R}(t)&= 16 \|\Delta u(t)\|^2_{L^2} -\frac{4d\alpha}{\alpha+2} \|u(t)\|^{\alpha+2}_{L^{\alpha+2}}  - 16 \|\Delta u(t)\|^2_{L^2(|x| >R)} +\frac{4d\alpha}{\alpha+2}\|u(t)\|^{\alpha+2}_{L^{\alpha+2}(|x|>R)} \\
	& \mathrel{\phantom{= 16 \|\Delta u(t)\|^2_{L^2}}} + \int_{|x|>R} \Delta^3 \varphi_R |u(t)|^2 dx - 4 \sum_{j,k} \int_{|x|>R} \partial^2_{jk} \Delta \varphi_R \re{(\partial_j \overline{u}(t) \partial_k u(t))} dx \\
	& \mathrel{\phantom{= 16 \|\Delta u(t)\|^2_{L^2}}} + 8 \sum_{j,k,l} \int_{|x|>R} \partial^2_{jk} \varphi_R \re{(\partial^2_{lj} \overline{u}(t) \partial^2_{kl} u(t)) } dx \\
	& \mathrel{\phantom{= 16 \|\Delta u(t)\|^2_{L^2} }} - 2 \int_{|x|>R} \Delta^2 \varphi_R |\nabla u(t)|^2 dx -\frac{2\alpha}{\alpha+2} \int_{|x|>R} \Delta \varphi_R |u(t)|^{\alpha+2} dx  \\	
	&= 4d\alpha E(u(t)) -2(d\alpha-8) \|\Delta u(t)\|^2_{L^2} + \int_{|x|>R} \Delta^3 \varphi_R |u(t)|^2 dx \\
	&\mathrel{\phantom{=}} - 4\sum_{j,k} \int_{|x|>R} \partial^2_{jk}\Delta \varphi_R \re{(\partial_j \overline{u}(t) \partial_k u(t) )} dx -2 \int_{|x|>R} \Delta^2 \varphi_R |\nabla u(t)|^2 dx 
	\end{align*}
	\begin{align*}
\mathrel{\phantom{M'_{\varphi_R}(t) = }}	&\mathrel{\phantom{=}} + 8 \sum_{j,k,l} \int_{|x|>R} \partial^2_{jk} \varphi_R \re{(\partial^2_{lj} \overline{u} (t) \partial^2_{kl} u(t) )} dx - 16 \|\Delta u(t)\|^2_{L^2(|x|>R)} \\
	&\mathrel{\phantom{= + 8 \sum_{j,k,l} \int_{|x|>R} \partial^2_{jk} \varphi_R \re{(\partial^2_{lj} \overline{u} (t) \partial^2_{kl} u(t) )} dx}} + \frac{2\alpha}{\alpha+2} \int_{|x|>R} (2d-\Delta \varphi_R) |u(t)|^{\alpha+2} dx.
	\end{align*}
	By the choice of $\varphi_R$, the assumption $(\ref{uniform bounded assumption})$ and $(\ref{holder inequality})$, we bound
	\begin{align*}
	\Big|\int_{|x|>R} \Delta^3 \varphi_R |u(t)|^2 dx \Big| &\lesssim R^{-4} \|u(t)\|^2_{L^2(|x| \lesssim R)} \lesssim R^{-2(2-\Gact)} \|u(t)\|^2_{\dot{H}^{\Gact}} \lesssim R^{-2(2-\Gact)}, \\
	\Big| \int_{|x|>R} \partial^2_{jk} \Delta \varphi_R \partial_j \overline{u} (t) \partial_k u (t) dx\Big|  &\lesssim R^{-2} \|\nabla u\|^2_{L^2(|x| \lesssim R)} \lesssim R^{-(2-\Gact)} \|u(t)\|_{\dot{H}^{\Gact}} \|\Delta u(t)\|_{L^2} \\
	&\mathrel{\phantom{\lesssim R^{-2} \|\nabla u\|^2_{L^2(|x| \lesssim R)}  \ }} \lesssim R^{-(2-\Gact)} \|\Delta u(t)\|_{L^2}, \\
	\Big| \int_{|x|>R} \Delta^2\varphi_R |\nabla u(t)|^2 dx \Big| &\lesssim R^{-(2-\Gact)} \|u(t)\|_{\dot{H}^{\Gact}} \|\Delta u(t)\|_{L^2} \lesssim R^{-(2-\Gact)} \|\Delta u(t)\|_{L^2}.
	\end{align*}
	Using the fact
	\[
	\partial^2_{jk} = \Big( \delta_{jk} - \frac{x_j x_k}{r^2} \Big) \frac{\partial_r}{r} + \frac{x_j x_k}{r^2} \partial^2_r,
	\]
	a calculation combined with integration by parts yields that
	\begin{align*}
	\sum_{j,k,l} \int \partial^2_{jk} \varphi_R \partial^2_{lj} \overline{u}(t) \partial^2_{kl} u(t) dx &= \int \varphi''_R |\partial^2_r u(t)|^2 + \frac{d-1}{r^2} \frac{\varphi'_R}{r} |\partial_r u(t)|^2 dx \\
	&= 2\int |\Delta u(t)|^2 - (2-\varphi''_R) |\partial^2_r u(t)|^2 - \Big( 2-\frac{\varphi'_R}{r}\Big) \frac{d-1}{r^2} |\partial_r u(t)|^2 dx \\
	& \leq 2\|\Delta u(t)\|^2_{L^2}.
	\end{align*} 
	Here we use the identity 
	\[
	\|\Delta u(t)\|^2_{L^2} = \int |\partial^2_r u(t)|^2 + \frac{d-1}{r^2} |\partial_r u(t)|^2 dx.
	\]
	Thus,
	\[
	8 \sum_{j,k,l} \int_{|x|>R} \partial^2_{jk} \varphi_R \re{(\partial^2_{lj} \overline{u} (t) \partial^2_{kl} u(t) )} dx - 16 \|\Delta u(t)\|^2_{L^2(|x|>R)} \leq 0.
	\]
	We obtain
	\begin{align*}
	M'_{\varphi_R}(t) &\leq 4d\alpha E(u(t)) -2(d\alpha-8) \|\Delta u(t)\|^2_{L^2} + O\Big( R^{-2(2-\Gact)} + R^{-(2-\Gact)} \|\Delta u(t)\|_{L^2} \Big) \\
	&\mathrel{\phantom{\leq 4d\alpha E(u(t)) -2(d\alpha-8) \|\Delta u(t)\|^2_{L^2}}}+ \frac{2\alpha}{\alpha+2} \int_{|x|>R} (2d-\Delta \varphi_R) |u(t)|^{\alpha+2} dx.
	\end{align*}
	We now estimate the last term of the above inequality. To do so, we use the argument of \cite{MerleRaphael}. Consider for $A>0$ the annulus $\mathcal{C} = \{A < |x| \leq 2A\}$, we claim that for any $\eps>0$,
	\begin{align}
	\|u(t)\|^{\alpha+2}_{L^{\alpha+2}(\mathcal{C})} \leq \eps \|\Delta u(t)\|_{L^2(\mathcal{C})} + C(\eps)A^{-2(2-\Gact)}. \label{estimate on annulus}
	\end{align}
	To see this, we use the radial Sobolev embedding (see e.g. \cite{Strauss}) and $(\ref{holder inequality})$ to estimate
	\begin{align*}
	\|u(t)\|^{\alpha+2}_{L^{\alpha+2}(\mathcal{C})} &\lesssim \Big( \sup_{\mathcal{C}} |u(t,x)| \Big)^\alpha \|u(t)\|^2_{L^2(\mathcal{C})} \\
	& \lesssim A^{-\frac{(d-1)\alpha}{2}} \|\nabla u(t)\|^{\frac{\alpha}{2}}_{L^2(\mathcal{C})} \|u(t)\|^{\frac{\alpha}{2}+2}_{L^2(\mathcal{C})} \\
	&\lesssim A^{-\frac{(d-1)\alpha}{2}} \|\Delta u(t)\|^{\frac{\alpha}{4}}_{L^2(\mathcal{C})} \|u(t)\|^{\frac{3\alpha}{4}+2}_{L^2(\mathcal{C})} \\
	&\lesssim A^{-\vartheta} \|\Delta u(t)\|^{\frac{\alpha}{4}}_{L^2(\mathcal{C})},
	\end{align*}
	where 
	\[
	\vartheta = \frac{(d-1)\alpha}{2}- \left(\frac{3\alpha}{4}+2\right) \Gact= 2(2-\Gact)\frac{4-\alpha}{4} > 0.
	\]
	By the Young inequality, we have for any $\eps>0$,
	\[
	\|u(t)\|^{\alpha+2}_{L^{\alpha+2}(\mathcal{C})} \lesssim  \eps \|\Delta u(t)\|_{L^2(\mathcal{C})} + \eps^{-\frac{\alpha}{4-\alpha}} A^{-\frac{4\vartheta}{4-\alpha}} = \eps \|\Delta u(t)\|_{L^2(\mathcal{C})} + C(\eps) A^{-2(2-\Gact)}.
	\]
	This shows the claim above. Note that the condition $\alpha<4$ is crucial to show $(\ref{estimate on annulus})$. We now write
	\begin{align*}
	\int_{|x|>R} |u(t)|^{\alpha+2} dx  = \sum_{j=0}^\infty \int_{2^j R<|x| \leq 2^{j+1} R} |u(t)|^{\alpha+2} dx, 
	\end{align*}
	and apply $(\ref{estimate on annulus})$ with $A=2^j R$ to get
	\begin{align*}
	\int_{|x|>R} |u(t)|^{\alpha+2} dx &\leq \eps \sum_{j=0}^\infty \|\Delta u(t)\|_{L^2(2^{j} R < |x| \leq 2^{j+1} R)} + C(\eps) \sum_{j=0}^\infty ( 2^j R)^{-2(2-\Gact)} \\
	& \leq \eps \|\Delta u(t)\|_{L^2(|x|>R)} + C(\eps) R^{-2(2-\Gact)}.
	\end{align*}
	Since $\|2d-\varphi_R\|_{L^\infty} \lesssim 1$, we obtain for any $\eps>0$,
	\[
	\int_{|x|>R} (2d-\varphi_R) |u(t)|^{\alpha+2} dx \lesssim \eps \|\Delta u(t)\|_{L^2(|x|>R)} + C(\eps) R^{-2(2-\Gact)}.
	\]
	Therefore, 
	\begin{align}
	M'_{\varphi_R}(t) \leq 4d \alpha E(u(t)) - 2(d\alpha-8) \|\Delta u(t)\|^2_{L^2} + O\Big(R^{-2(2-\Gact)} &+ R^{-(2-\Gact)} \|\Delta u(t)\|_{L^2} \label{estimate variation rate} \\
	& + \eps \|\Delta u(t)\|_{L^2} + C(\eps) R^{-2(2-\Gact)} \Big). \nonumber
	\end{align}
	By taking $\eps>0$ small enough and $R>0$ large enough depending on $\eps$, the conservation of energy implies
	\begin{align}
	M'_{\varphi_R}(t) \leq 2d\alpha E(u_0) - \delta \|\Delta u(t)\|^2_{L^2}, \label{negative variation rate}
	\end{align}
	for all $t\in [0,T)$, where $\delta:= d\alpha-8>0$. With $(\ref{negative variation rate})$ at hand, the finite time blowup follows by a standard argument (see e.g. \cite{BoulengerLenzmann}). 
\end{proof}
\section{Blowup concentration} \label{section blowup concentration}
\setcounter{equation}{0}
\begin{theorem} [Blowup concentration] \label{theorem H dot gamma concentration intercritical NL4S}
	Let $d\geq 5$ and $2_\star <\alpha<2^\star$. Let $u_0 \in \dot{H}^{\Gact} \cap \dot{H}^2$ be such that the corresponding solution $u$ to $(\ref{intercritical NL4S})$ blows up at finite time $0<T<\infty$. Assume that the solution satisfies 
	\begin{align}
	\sup_{t\in [0,T)} \|u(t)\|_{\dot{H}^{\Gact}} < \infty. \label{assumption blowup intercritical}
	\end{align}
	Let $a(t)>0$ be such that 
	\begin{align}
	a(t) \|u(t)\|_{\dot{H}^2}^{\frac{1}{2-\Gace}} \rightarrow \infty, \label{condition of a intercritical}
	\end{align}
	as $t\uparrow T$. Then there exist $x(t), y(t) \in \R^d$ such that
	\begin{align}
	\liminf_{t\uparrow T} \int_{|x-x(t)| \leq a(t)} |(-\Delta)^{\frac{\Gace}{2}}u(t,x)|^2 dx \geq S_{\emph{gs}}^2, \label{H dot gamma concentration intercritical}
	\end{align}
	and
	\begin{align}
	\liminf_{t\uparrow T} \int_{|x-y(t)| \leq a(t)} |u(t,x)|^{\alphce} dx \geq L_{\emph{gs}}^2. \label{L alpha concentration intercritical}
	\end{align}
\end{theorem}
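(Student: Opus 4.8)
The plan is to rescale the blowup solution along a sequence of times $t_n \uparrow T$, producing a bounded sequence in $\dot{H}^{\Gact} \cap \dot{H}^2$ whose $L^{\alpha+2}$-norm stays bounded below, apply the compactness lemma (Theorem \ref{theorem compactness lemma intercritical NL4S}), and transfer the resulting concentration back to the original variables through the scaling.

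First, fix $t_n \uparrow T$. Since $u$ blows up and $\sup_{[0,T)} \|u(t)\|_{\dot{H}^{\Gact}} < \infty$ by $(\ref{assumption blowup intercritical})$, the blowup alternative in Proposition \ref{prop local well-posedness H dot 2} forces $\rho_n := \|u(t_n)\|_{\dot{H}^2} \to \infty$. I would set $\lambda_n := \rho_n^{-1/(2-\Gact)}$ and define
\[
v_n(x) := \lambda_n^{4/\alpha} u(t_n, \lambda_n x).
\]
The scaling identities in the proof of Proposition \ref{prop variational structure ground state intercritical} give $\|v_n\|_{\dot{H}^{\Gact}} = \|u(t_n)\|_{\dot{H}^{\Gact}}$ and $\|v_n\|_{\dot{H}^2} = \lambda_n^{2-\Gact} \rho_n = 1$, so $(v_n)_{n\geq 1}$ is bounded in $\dot{H}^{\Gact} \cap \dot{H}^2$ with $\limsup_n \|v_n\|_{\dot{H}^2} \leq 1 =: M$. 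The same scaling yields $\|v_n\|_{L^{\alpha+2}}^{\alpha+2} = \rho_n^{-2} \|u(t_n)\|_{L^{\alpha+2}}^{\alpha+2}$, and inserting the conserved energy $E(u(t_n)) = E(u_0)$ rewrites this as $\|v_n\|_{L^{\alpha+2}}^{\alpha+2} = (\alpha+2)\big(\tfrac12 - E(u_0)\rho_n^{-2}\big) \to \tfrac{\alpha+2}{2}$. Hence $\limsup_n \|v_n\|_{L^{\alpha+2}} \geq m$ with $m^{\alpha+2} = \tfrac{\alpha+2}{2}$.

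With these $M = 1$ and $m$, Theorem \ref{theorem compactness lemma intercritical NL4S} supplies (after extracting a subsequence) a sequence $(\xi_n)$ and a weak limit $v_n(\cdot + \xi_n) \rightharpoonup V$ in $\dot{H}^{\Gact} \cap \dot{H}^2$ with $\|V\|_{\dot{H}^{\Gact}}^\alpha \geq \tfrac{2}{\alpha+2} m^{\alpha+2} M^{-2} S_{\text{gs}}^\alpha = S_{\text{gs}}^\alpha$. Since $(-\Delta)^{\Gact/2}$ commutes with translations, the weak convergence becomes $(-\Delta)^{\Gact/2} v_n(\cdot+\xi_n) \rightharpoonup (-\Delta)^{\Gact/2} V$ in $L^2$, so weak lower semicontinuity of the $L^2$-norm on a fixed ball $B_R$ gives
\[
\liminf_{n\to\infty} \int_{|x|\leq R} \big|(-\Delta)^{\Gact/2} v_n(x+\xi_n)\big|^2 \, dx \geq \int_{|x|\leq R} \big|(-\Delta)^{\Gact/2} V\big|^2 \, dx,
\]
whose right-hand side tends to $\|V\|_{\dot{H}^{\Gact}}^2 \geq S_{\text{gs}}^2$ as $R \to \infty$. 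Undoing the scaling via $(-\Delta)^{\Gact/2} v_n(y) = \lambda_n^{d/2} [(-\Delta)^{\Gact/2} u(t_n)](\lambda_n y)$ and the change of variables $z = \lambda_n y$ turns the ball $\{|y - \xi_n| \leq R\}$ into $\{|z - x(t_n)| \leq \lambda_n R\}$ with $x(t_n) := \lambda_n \xi_n$. The hypothesis $(\ref{condition of a intercritical})$ reads $a(t_n)/\lambda_n \to \infty$, so $\lambda_n R \leq a(t_n)$ for $n$ large; enlarging the ball only increases the nonnegative integrand, and letting first $n \to \infty$ and then $R \to \infty$ produces $(\ref{H dot gamma concentration intercritical})$ along the sequence $(t_n)$. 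Running the same argument with the second alternative of Theorem \ref{theorem compactness lemma intercritical NL4S} (weak limit $W$ in $L^{\alphct} \cap \dot{H}^2$ with $\|W\|_{L^{\alphct}} \geq L_{\text{gs}}$, and weak lower semicontinuity of the $L^{\alphct}$-norm on balls) yields $(\ref{L alpha concentration intercritical})$.

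To pass from ``along $(t_n)$'' to the full $\liminf_{t\uparrow T}$, I would define $x(t)$ as an approximate maximizer of $y \mapsto \int_{|x-y|\leq a(t)} |(-\Delta)^{\Gact/2} u(t,x)|^2 \, dx$ and argue by contradiction: if the $\liminf$ were strictly below $S_{\text{gs}}^2$, some sequence $t_n \uparrow T$ would have every $a(t_n)$-ball carrying strictly less than $S_{\text{gs}}^2$, contradicting what the rescaling argument produces for that very sequence. The hard part is really the interplay between scaling and the absence of mass conservation: it is exactly the boundedness assumption $(\ref{assumption blowup intercritical})$ that simultaneously keeps $(v_n)$ bounded in the homogeneous intersection space and, through energy conservation, pins the $L^{\alpha+2}$-mass of $v_n$ to the critical value $\tfrac{\alpha+2}{2}$ — which is precisely what makes the compactness lemma return the sharp constants $S_{\text{gs}}$ and $L_{\text{gs}}$ rather than smaller quantities.
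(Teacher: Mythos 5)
Your proposal is correct and follows essentially the same route as the paper: rescale along an arbitrary sequence $t_n\uparrow T$ by $\lambda_n \sim \|u(t_n)\|_{\dot H^2}^{-1/(2-\Gact)}$, use energy conservation plus the $\dot H^{\Gact}$-bound to pin $\|v_n\|_{L^{\alpha+2}}^{\alpha+2}$ at the value that makes the compactness lemma return exactly $S_{\text{gs}}$ (resp.\ $L_{\text{gs}}$), then transfer the weak limit's mass on balls back through the scaling using $a(t_n)/\lambda_n\to\infty$. The only differences are cosmetic (normalizing $\|v_n\|_{\dot H^2}=1$ rather than $\|g\|_{\dot H^2}$, and an approximate-maximizer/contradiction phrasing where the paper attains the supremum in $y$ by continuity and decay at infinity).
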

\begin{rem} \label{rem H dot gamma concentration intercritical NL4S}
	\begin{itemize}
		\item The restriction $d\geq 5$ comes from the local well-posedness and blowup results. The result still holds true for dimensions $d \leq 4$ provided that one can show local well-posedness and blowup in such dimensions.
		\item By the blowup rate given in Corollary $\ref{coro blowup rate intercritical}$ and the assumption $(\ref{assumption blowup intercritical})$, we have 
		\[
		\|u(t)\|_{\dot{H}^2} > \frac{C}{(T-t)^{\frac{2-\Gact}{4}}},
		\]
		for $t\uparrow T$. Rewriting
		\begin{align*}
		\frac{1}{a(t) \|u(t)\|_{\dot{H}^2}^{\frac{1}{2-\Gace}}} = \frac{\sqrt[4]{T-t}}{a(t)} \frac{1}{\sqrt[4]{T-t}\|u(t)\|_{\dot{H}^2}^{\frac{1}{2-\Gace}}} &= \frac{\sqrt[4]{T-t}}{a(t)} \left(\frac{1}{(T-t)^{\frac{2-\Gact}{4}} \|u(t)\|_{\dot{H}^2} } \right)^{\frac{1}{2-\Gact}} \\
		&<C\frac{\sqrt[4]{T-t}}{a(t)},
		\end{align*}
		we see that any function $a(t)>0$ satisfying $\frac{\sqrt[4]{T-t}}{a(t)} \rightarrow 0$ as $t\uparrow T$ fulfills the conditions of Theorem $\ref{theorem H dot gamma concentration intercritical NL4S}$. 
	\end{itemize}
\end{rem}
\noindent \textit{Proof of Theorem $\ref{theorem H dot gamma concentration intercritical NL4S}$.}
Let $(t_n)_{n\geq 1}$ be a sequence such that $t_n \uparrow T$ and $g\in \mathcal{G}$. Set
\[
\lambda_n := \left(\frac{\|g\|_{\dot{H}^2}}{\|u(t_n)\|_{\dot{H}^2}}\right)^{\frac{1}{2-\Gact}}, \quad v_n(x):= \lambda_n^{\frac{4}{\alpha}} u(t_n, \lambda_n x). 
\]
By the blowup alternative and the assumption $(\ref{assumption blowup intercritical})$, we see that $\lambda_n \rightarrow 0$ as $n \rightarrow \infty$. Moreover, we have
\begin{align*}
\|v_n\|_{\dot{H}^{\Gact}} = \|u(t_n)\|_{\dot{H}^{\Gact}} <\infty,
\end{align*}
uniformly in $n$ and 
\[
\|v_n\|_{\dot{H}^2} = \lambda_n^{2-\Gact}\|u(t_n)\|_{\dot{H}^2}= \|g\|_{\dot{H}^2},
\] 
and
\[
E(v_n) = \lambda_n^{2(2-\Gact)} E(u(t_n)) = \lambda_n^{2(2-\Gact)} E(u_0) \rightarrow 0, \quad \text{as } n \rightarrow \infty. 
\]
This implies in particular that
\begin{align*}
\|v_n\|^{\alpha+2}_{L^{\alpha+2}} \rightarrow \frac{\alpha+2}{2} \|g\|^2_{\dot{H}^2}, \quad \text{as } n \rightarrow \infty. 
\end{align*}
The sequence $(v_n)_{n\geq 1}$ satisfies the conditions of Theorem $\ref{theorem compactness lemma intercritical NL4S}$ with 
\[
m^{\alpha+2} = \frac{\alpha+2}{2} \|g\|^2_{\dot{H}^2}, \quad M^2 = \|g\|^2_{\dot{H}^2}. 
\]
Therefore, there exists a sequence $(x_n)_{n\geq 1}$ in $\R^d$ such that up to a subsequence,
\[
v_n(\cdot + x_n)  = \lambda_n^{\frac{4}{\alpha}} u(t_n, \lambda_n \cdot + x_n) \rightharpoonup V \text{ weakly in } \dot{H}^{\Gact} \cap \dot{H}^2,
\]
as $n \rightarrow \infty$ with $\|V\|_{\dot{H}^{\Gact}} \geq  S_{\text{gs}}$. In particular, 
\[
(-\Delta)^{\frac{\Gact}{2}} v(\cdot + x_n) = \lambda_n^{\frac{d}{2}} [(-\Delta)^{\frac{\Gact}{2}} u](t_n, \lambda_n \cdot + x_n) \rightharpoonup (-\Delta)^{\frac{\Gact}{2}} V \text{ weakly in } L^2. 
\]
This implies for every $R>0$,
\[
\liminf_{n\rightarrow \infty} \int_{|x|\leq R} \lambda_n^{d}| [(-\Delta)^{\frac{\Gact}{2}} u](t_n, \lambda_n x + x_n)|^2 dx \geq \int_{|x|\leq R} |(-\Delta)^{\frac{\Gact}{2}} V(x)|^2 dx,
\]
or 
\[
\liminf_{n\rightarrow \infty} \int_{|x-x_n|\leq R\lambda_n} | [(-\Delta)^{\frac{\Gact}{2}} u](t_n, x)|^2 dx \geq \int_{|x|\leq R} |(-\Delta)^{\frac{\Gact}{2}} V(x)|^2 dx.
\]
In view of the assumption $\frac{a(t_n)}{\lambda_n} \rightarrow \infty$ as $n\rightarrow \infty$, we get
\[
\liminf_{n\rightarrow \infty} \sup_{y\in \R^d} \int_{|x-y|\leq a(t_n)} |(-\Delta)^{\frac{\Gact}{2}}u(t_n, x)|^2 dx \geq \int_{|x|\leq R} |(-\Delta)^{\frac{\Gact}{2}}V(x)|^2 dx,
\]
for every $R>0$, which means that
\[
\liminf_{n\rightarrow \infty} \sup_{y \in \R^d} \int_{|x-y|\leq a(t_n)} |(-\Delta)^{\frac{\Gact}{2}}u(t_n, x)|^2 dx \geq \int |(-\Delta)^{\frac{\Gact}{2}}V(x)|^2 dx \geq  S_{\text{gs}}^2.
\]
Since the sequence $(t_n)_{n\geq 1}$ is arbitrary, we infer that
\[
\liminf_{t\uparrow T} \sup_{y\in \R^d} \int_{|x-y|\leq a(t)} |(-\Delta)^{\frac{\Gact}{2}}u(t,x)|^2 dx \geq S_{\text{gs}}^2.
\]
But for every $t \in (0,T)$, the function $y\mapsto \int_{|x-y| \leq a(t)} |(-\Delta)^{\frac{\Gact}{2}}u(t,x)|^2 dx$ is continuous and goes to zero at infinity. As a result, we get
\[
\sup_{y\in \R^d} \int_{|x-y|\leq a(t)} |(-\Delta)^{\frac{\Gact}{2}}u(t,x)|^2 dx = \int_{|x-x(t)| \leq a(t)} |(-\Delta)^{\frac{\Gact}{2}}u(t,x)|^2 dx,
\]
for some $x(t) \in \R^d$. This shows $(\ref{H dot gamma concentration intercritical})$. The proof for $(\ref{L alpha concentration intercritical})$ is similar using Item 2 of Theorem $\ref{theorem compactness lemma intercritical NL4S}$. The proof is complete.
\defendproof
\section{Limiting profile with critical norms} \label{section limiting profile}
\setcounter{equation}{0}
Let us start with the following characterization of solution with critical norms.
\begin{lem} \label{lem characterization critical norm intercritical}
	Let $d\geq 1$ and $2_\star<\alpha<2^\star$. 
	\begin{itemize}
		\item If $u \in \dot{H}^{\Gact} \cap \dot{H}^2$ is such that $\|u\|_{\dot{H}^{\Gact}}=S_{\emph{gs}}$ and $E(u)=0$, then $u$ is of the form
		\[
		u(x) = e^{i\theta} \lambda^{\frac{4}{\alpha}} g(\lambda x + x_0),
		\]
		for some $g\in \mathcal{G}$, $\theta \in \R, \lambda>0$ and $x_0 \in \R^d$. 
		\item If $u \in L^{\alphce} \cap \dot{H}^2$ is such that $\|u\|_{L^{\alphce}}=L_{\emph{gs}}$ and $E(u)=0$, then $u$ is of the form
		\[
		u(x) = e^{i\vartheta} \mu^{\frac{4}{\alpha}} h(\mu x + y_0),
		\]
		for some $h\in \mathcal{H}$, $\vartheta \in \R, \mu>0$ and $y_0 \in \R^d$.
	\end{itemize}
\end{lem}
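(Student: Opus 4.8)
The plan is to show that the two hypotheses force $u$ to be a maximizer of the Gagliardo--Nirenberg functional $H$, and then to convert that maximizer into a normalized ground state by the rescaling already used in the proof of Proposition $\ref{prop variational structure ground state intercritical}$, using the constraint on the critical norm to pin down the amplitude. First I would record that $E(u)=0$ is equivalent to $\|u\|_{L^{\alpha+2}}^{\alpha+2} = \tfrac{\alpha+2}{2}\|u\|_{\dot{H}^2}^2$, and that $\|u\|_{\dot{H}^{\Gact}} = S_{\text{gs}}>0$ forces $u\neq 0$, so every norm below is positive. Substituting these two relations and $A_{\text{GN}} = \tfrac{\alpha+2}{2} S_{\text{gs}}^{-\alpha}$ into the quotient $H(u) = \|u\|_{L^{\alpha+2}}^{\alpha+2}\big/\big(\|u\|_{\dot{H}^{\Gact}}^{\alpha}\|u\|_{\dot{H}^2}^{2}\big)$ gives $H(u)=A_{\text{GN}}$, so that $u$ attains the sharp constant in $(\ref{sharp gagliardo-nirenberg inequality intercritical 1})$ and is therefore a maximizer of $H$. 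By Lemma $\ref{lem maximizer intercritical}$, $u$ then satisfies the Euler--Lagrange equation $(\ref{maximizer equation intercritical 1})$.

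Next I would normalize. Writing $u(x) = a\,g(\lambda x + x_0)$ with parameters $a\in\C^*,\ \lambda>0,\ x_0\in\R^d$ to be chosen, and inserting this into $(\ref{maximizer equation intercritical 1})$ using the homogeneities of $\Delta^2$, $(-\Delta)^{\Gact}$ and $|\cdot|^\alpha\cdot$, one obtains an equation for $g$ whose three coefficients can be matched: choosing $\lambda^{2(2-\Gact)} = \tfrac{\alpha}{2}\,\|u\|_{\dot{H}^2}^2/\|u\|_{\dot{H}^{\Gact}}^2$ equalizes the $\Delta^2 g$ and $(-\Delta)^{\Gact}g$ coefficients, and then a suitable choice of $|a|$ equalizes the nonlinear coefficient, so that after dividing by the common factor $g$ solves the normalized elliptic equation $(\ref{elliptic equation critical sobolev})$. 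Since $H$ is invariant under the dilation $f\mapsto \mu f(\lambda\cdot)$ (shown in the proof of Proposition $\ref{prop variational structure ground state intercritical}$), under translations, and under multiplication by a phase, the function $g$ is again a maximizer of $H$; being also a solution of $(\ref{elliptic equation critical sobolev})$, it lies in $\mathcal{G}$ by Definition $\ref{definition ground state}$.

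Finally I would fix the amplitude using the normalization of the critical norm. By the scaling law of the homogeneous norm and translation invariance, $\|u\|_{\dot{H}^{\Gact}} = |a|\,\lambda^{-4/\alpha}\,\|g\|_{\dot{H}^{\Gact}}$, where I used $\tfrac{d}{2}-\Gact = \tfrac{4}{\alpha}$. Since $g\in\mathcal{G}$ has $\|g\|_{\dot{H}^{\Gact}} = S_{\text{gs}}$ by $(\ref{critical sobolev norm})$, the hypothesis $\|u\|_{\dot{H}^{\Gact}} = S_{\text{gs}}$ yields $|a| = \lambda^{4/\alpha}$; writing $a = e^{i\theta}|a|$ gives precisely $u(x) = e^{i\theta}\lambda^{4/\alpha} g(\lambda x + x_0)$. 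The second assertion follows the identical three steps with $H, A_{\text{GN}}, S_{\text{gs}}, \mathcal{G}$ replaced by $K, B_{\text{GN}}, L_{\text{gs}}, \mathcal{H}$, using $(\ref{sharp gagliardo-nirenberg inequality intercritical 2})$, the second maximizer equation $(\ref{maximizer equation intercritical 2})$ and the elliptic equation $(\ref{elliptic equation critical lebesgue})$, and the Lebesgue scaling identity $d/\alphct = 4/\alpha$. The step requiring the most care is the rescaling: one must track the scaling exponents of $\Delta^2$, $(-\Delta)^{\Gact}$ and the nonlinearity so that all three coefficients of the rescaled equation can be matched simultaneously, and verify that it is precisely the normalization of the \emph{critical} norm $\|u\|_{\dot{H}^{\Gact}}=S_{\text{gs}}$ (rather than of $\|u\|_{\dot{H}^2}$) that forces the prefactor to equal $\lambda^{4/\alpha}$.
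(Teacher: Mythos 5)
Your proposal is correct and follows essentially the same route as the paper: deduce $H(u)=A_{\text{GN}}$ from $E(u)=0$ together with $\|u\|_{\dot{H}^{\Gact}}=S_{\text{gs}}$, identify the maximizer $u$ as a rescaled, translated and phase-shifted ground state, and use the critical-norm normalization to pin down $|a|=\lambda^{4/\alpha}$. The only difference is that you unpack the middle step via Lemma \ref{lem maximizer intercritical} and an explicit matching of the scaling exponents, where the paper simply invokes Proposition \ref{prop variational structure ground state intercritical}; this is a harmless, if anything slightly more careful, expansion of the same argument.
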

\begin{proof}
	We only prove Item 1, Item 2 is treated similarly. Since $E(u)=0$, we have
	\[
	\|u\|^2_{\dot{H}^2} = \frac{2}{\alpha+2} \|u\|^{\alpha+2}_{L^{\alpha+2}}.
	\]
	Thus
	\[
	H(u) = \frac{\|u\|^{\alpha+2}_{L^{\alpha+2}}}{\|u\|^{\alpha}_{\dot{H}^{\Gact}} \|u\|^2_{\dot{H}^2}} = \frac{\alpha+2}{2} \|u\|^{-\alpha}_{\dot{H}^{\Gact}} = \frac{\alpha+2}{2} S_{\text{gs}}^{-\alpha} = A_{\text{GN}}.
	\]
	This shows that $u$ is the maximizer of $H$. It follows from Proposition $\ref{prop variational structure ground state intercritical}$ that $u$ is of the form $u(x) = a g(\lambda x +x_0)$ for some $g\in \mathcal{G}$, $a \in \C^\star, \lambda>0$ and $x_0 \in \R^d$. On the other hand, since $\|u\|_{\dot{H}^{\Gact}} = S_{\text{gs}}=\|g\|_{\dot{H}^{\Gact}}$, we have $|a|= \lambda^{\frac{4}{\alpha}}$. This shows the result. 
\end{proof}
We now have the following limiting profile of blowup solutions with critical norms. 
\begin{theorem}[Limiting profile with critical norms] \label{theorem limiting profile critical norm intercritical}
	Let $d\geq 5$ and $2_\star <\alpha<2^\star$. Let $u_0 \in \dot{H}^{\Gact} \cap \dot{H}^2$ be such that the corresponding solution $u$ to $(\ref{intercritical NL4S})$ blows up at finite time $0<T<\infty$. 
	\begin{itemize}
		\item Assume that 
		\begin{align}
		\sup_{t\in [0,T)} \|u(t)\|_{\dot{H}^{\Gact}} = S_{\emph{gs}}. \label{assumption critical sobolev norm}
		\end{align}
		Then there exist $g \in \mathcal{G}$, $\theta(t)\in \R$, $\lambda(t)>0$ and $x(t) \in \R^d$ such that 
		\[
		e^{i\theta(t)}  \lambda^{\frac{4}{\alpha}}(t) u(t, \lambda(t) \cdot + x(t)) \rightarrow g \text{ strongly in } \dot{H}^{\Gact} \cap \dot{H}^2 \text{ as } t \uparrow T.
		\]
		\item Assume that 
		\begin{align}
		\sup_{t\in [0,T)} \|u(t)\|_{\dot{H}^{\Gact}} < \infty, \quad \sup_{t\in [0,T)} \|u(t)\|_{L^{\alphce}} = L_{\emph{gs}}. \label{assumption critical lebesgue norm}
		\end{align}
		Then there exist $h\in \mathcal{H}$, $\vartheta(t)\in \R$, $\mu(t)>0$ and $y(t) \in \R^d$ such that 
		\[
		e^{i\vartheta(t)}  \mu^{\frac{4}{\alpha}}(t) u(t, \mu(t) \cdot + y(t)) \rightarrow h \text{ strongly in } L^{\alphce} \cap \dot{H}^2 \text{ as } t \uparrow T.
		\]
	\end{itemize}
\end{theorem}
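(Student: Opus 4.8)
The plan is to run a concentration--compactness argument on a suitable rescaling of $u$ at times approaching $T$, exactly as in the proof of Theorem~\ref{theorem H dot gamma concentration intercritical NL4S}, but now exploiting the \emph{criticality} of the norm to upgrade the weak profile convergence to a strong one. I treat Item~1 in detail; Item~2 is parallel. Fix a ground state $g\in\mathcal{G}$ and an arbitrary sequence $t_n\uparrow T$, and set
\[
\lambda_n:=\left(\frac{\|g\|_{\dot{H}^2}}{\|u(t_n)\|_{\dot{H}^2}}\right)^{\frac{1}{2-\Gact}},\qquad v_n(x):=\lambda_n^{\frac{4}{\alpha}}u(t_n,\lambda_n x).
\]
By the blowup alternative of Proposition~\ref{prop local well-posedness H dot 2} together with the uniform bound $(\ref{assumption critical sobolev norm})$ we have $\|u(t_n)\|_{\dot{H}^2}\to\infty$, hence $\lambda_n\to0$. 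The critical scaling gives $\|v_n\|_{\dot{H}^{\Gact}}=\|u(t_n)\|_{\dot{H}^{\Gact}}\le S_{\text{gs}}$, $\|v_n\|_{\dot{H}^2}=\|g\|_{\dot{H}^2}$ and $E(v_n)=\lambda_n^{2(2-\Gact)}E(u_0)\to0$, the last of which forces $\|v_n\|^{\alpha+2}_{L^{\alpha+2}}\to\frac{\alpha+2}{2}\|g\|^2_{\dot{H}^2}$. Thus $(v_n)$ satisfies the hypotheses of Theorem~\ref{theorem compactness lemma intercritical NL4S} with $M^2=\|g\|^2_{\dot{H}^2}$ and $m^{\alpha+2}=\frac{\alpha+2}{2}\|g\|^2_{\dot{H}^2}$.

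Applying Theorem~\ref{theorem compactness lemma intercritical NL4S} produces $(x_n)$ and $V$ with $v_n(\cdot+x_n)\rightharpoonup V$ weakly in $\dot{H}^{\Gact}\cap\dot{H}^2$ and $\|V\|^\alpha_{\dot{H}^{\Gact}}\ge\frac{2}{\alpha+2}\frac{m^{\alpha+2}}{M^2}S_{\text{gs}}^\alpha=S_{\text{gs}}^\alpha$, i.e. $\|V\|_{\dot{H}^{\Gact}}\ge S_{\text{gs}}$. On the other hand, weak lower semicontinuity and $(\ref{assumption critical sobolev norm})$ give $\|V\|_{\dot{H}^{\Gact}}\le\liminf_n\|v_n\|_{\dot{H}^{\Gact}}\le S_{\text{gs}}$. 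Hence $\|V\|_{\dot{H}^{\Gact}}=S_{\text{gs}}$ and $\|v_n(\cdot+x_n)\|_{\dot{H}^{\Gact}}\to\|V\|_{\dot{H}^{\Gact}}$; since $\dot{H}^{\Gact}$ is a Hilbert space, weak convergence together with norm convergence upgrades to strong convergence $v_n(\cdot+x_n)\to V$ in $\dot{H}^{\Gact}$. (As a byproduct, the sequence $t_n$ being arbitrary, one obtains $\lim_{t\uparrow T}\|u(t)\|_{\dot{H}^{\Gact}}=S_{\text{gs}}$.)

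The heart of the matter is to promote this to strong convergence in $\dot{H}^2$. Writing $w_n:=v_n(\cdot+x_n)-V$, we have $w_n\to0$ in $\dot{H}^{\Gact}$, hence in $L^{\alphct}$ by Sobolev embedding, while $(w_n)$ stays bounded in $\dot{H}^2\hookrightarrow L^{2d/(d-4)}$. Since $\alphct<\alpha+2<\frac{2d}{d-4}$, H\"older interpolation between these Lebesgue spaces gives $w_n\to0$ in $L^{\alpha+2}$, so $\|V\|^{\alpha+2}_{L^{\alpha+2}}=\lim_n\|v_n\|^{\alpha+2}_{L^{\alpha+2}}=\frac{\alpha+2}{2}\|g\|^2_{\dot{H}^2}$. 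Feeding $\|V\|_{\dot{H}^{\Gact}}=S_{\text{gs}}$ and $A_{\text{GN}}=\frac{\alpha+2}{2}S_{\text{gs}}^{-\alpha}$ into the sharp inequality $(\ref{sharp gagliardo-nirenberg inequality intercritical 1})$ forces $\|g\|^2_{\dot{H}^2}\le\|V\|^2_{\dot{H}^2}$, which combined with the weak lower semicontinuity bound $\|V\|_{\dot{H}^2}\le\|g\|_{\dot{H}^2}$ yields $\|V\|_{\dot{H}^2}=\|g\|_{\dot{H}^2}$. A second Hilbert-space weak-plus-norm argument then gives $v_n(\cdot+x_n)\to V$ strongly in $\dot{H}^2$, hence in $\dot{H}^{\Gact}\cap\dot{H}^2$.

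Finally, the strong $L^{\alpha+2}$ and $\dot{H}^2$ convergences give $E(V)=\frac12\|V\|^2_{\dot{H}^2}-\frac{1}{\alpha+2}\|V\|^{\alpha+2}_{L^{\alpha+2}}=0$, together with $\|V\|_{\dot{H}^{\Gact}}=S_{\text{gs}}$; Lemma~\ref{lem characterization critical norm intercritical} then identifies $V=e^{i\theta_0}\lambda_0^{4/\alpha}\tilde{g}(\lambda_0\cdot+x_0)$ for some $\tilde{g}\in\mathcal{G}$. Unwinding the definition of $v_n$ and absorbing $\theta_0,\lambda_0,x_0$ and $\lambda_n,x_n$ into modulation parameters $\theta(t_n),\lambda(t_n),x(t_n)$ produces $e^{i\theta(t_n)}\lambda(t_n)^{4/\alpha}u(t_n,\lambda(t_n)\cdot+x(t_n))\to\tilde{g}$ strongly in $\dot{H}^{\Gact}\cap\dot{H}^2$; since $(t_n)$ was arbitrary, a standard subsequence/contradiction argument upgrades this to convergence as $t\uparrow T$ with $g:=\tilde{g}$. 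Item~2 follows the same scheme, rescaling by $\mu_n=(\|h\|_{\dot{H}^2}/\|u(t_n)\|_{\dot{H}^2})^{1/(2-\Gact)}$ with $h\in\mathcal{H}$, using the extra hypothesis $\sup_t\|u(t)\|_{\dot{H}^{\Gact}}<\infty$ from $(\ref{assumption critical lebesgue norm})$ to keep $(v_n)$ bounded, the second part of Theorem~\ref{theorem compactness lemma intercritical NL4S}, the inequality $(\ref{sharp gagliardo-nirenberg inequality intercritical 2})$, and Item~2 of Lemma~\ref{lem characterization critical norm intercritical}; the only structural change is that, $L^{\alphct}$ being merely uniformly convex rather than Hilbertian, one invokes the Radon--Riesz property (weak convergence plus norm convergence implies strong convergence) to pass from weak to strong convergence in $L^{\alphct}$. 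The main obstacle throughout is precisely this weak-to-strong upgrade, which hinges on the strict interpolation position $\alphct<\alpha+2<\frac{2d}{d-4}$ and on saturating the sharp Gagliardo--Nirenberg inequality.
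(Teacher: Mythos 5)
Your proposal is correct and follows essentially the same route as the paper's proof: the same rescaling $v_n=\lambda_n^{4/\alpha}u(t_n,\lambda_n\cdot)$, the same application of Theorem~\ref{theorem compactness lemma intercritical NL4S}, the same two-step upgrade to strong convergence (first in $\dot{H}^{\Gact}$ via saturation of $S_{\text{gs}}$, then in $\dot{H}^2$ via the sharp Gagliardo--Nirenberg inequality), and the same invocation of Lemma~\ref{lem characterization critical norm intercritical}. The only cosmetic difference is that you obtain the intermediate $L^{\alpha+2}$ convergence by H\"older interpolation between $L^{\alphct}$ and $L^{2d/(d-4)}$, whereas the paper applies the Gagliardo--Nirenberg inequality $(\ref{sharp gagliardo-nirenberg inequality intercritical 1})$ directly to $v_n(\cdot+x_n)-V$; these are equivalent.
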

\begin{proof}
	We only give the proof for the first case, the second case is similar. We will show that for any $(t_n)_{n\geq 1}$ satisfying $t_n \uparrow T$, there exist a subsequence still denoted by $(t_n)_{n\geq 1}$, $g\in \mathcal{G}$, sequences of $\theta_n \in \R, \lambda_n>0$ and $x_n \in \R^d$ such that
	\begin{align}
	e^{it\theta_n} \lambda^{\frac{4}{\alpha}}_n u(t_n, \lambda_n \cdot + x_n) \rightarrow g \text{ strongly in } \dot{H}^{\Gact} \cap \dot{H}^2 \text{ as } n \rightarrow \infty. \label{limiting profile critical norm proof intercritical}
	\end{align}
	Let $(t_n)_{n\geq 1}$ be a sequence such that $t_n \uparrow T$. Set
	\[
	\lambda_n := \left(\frac{\|Q\|_{\dot{H}^2}}{\|u(t_n)\|_{\dot{H}^2}}\right)^{\frac{1}{2-\Gact}}, \quad v_n(x):= \lambda_n^{\frac{4}{\alpha}} u(t_n, \lambda_n x),
	\]
	where $Q$ is as in Proposition $\ref{prop variational structure ground state intercritical}$. By the blowup alternative and $(\ref{assumption critical sobolev norm})$, we see that $\lambda_n \rightarrow 0$ as $n \rightarrow \infty$. Moreover, we have
	\begin{align}
	\|v_n\|_{\dot{H}^{\Gact}} = \|u(t_n)\|_{\dot{H}^{\Gact}} \leq S_{\text{gs}}=\|Q\|_{\dot{H}^{\Gact}},  \label{property v_n intercritical}
	\end{align}
	and
	\begin{align}
	\|v_n\|_{\dot{H}^2} = \lambda_n^{2-\Gact} \|u(t_n)\|_{\dot{H}^2} = \| Q\|_{\dot{H}^2}, \label{property v_n intercritical 1}
	\end{align}
	and
	\[
	E(v_n) = \lambda_n^{2(2-\Gact)} E(u(t_n)) = \lambda_n^{2(2-\Gact)} E(u_0) \rightarrow 0, \quad \text{as } n \rightarrow \infty. 
	\]
	This yields in particular that
	\begin{align}
	\|v_n\|^{\alpha+2}_{L^{\alpha+2}} \rightarrow \frac{\alpha+2}{2} \|Q\|^2_{\dot{H}^2}, \quad \text{as } n \rightarrow \infty. \label{convergence v_n intercritical}
	\end{align}
	The sequence $(v_n)_{n\geq 1}$ satisfies the conditions of Theorem $\ref{theorem compactness lemma intercritical NL4S}$ with 
	\[
	m^{\alpha+2} = \frac{\alpha+2}{2} \|Q\|^2_{\dot{H}^2}, \quad M^2 = \|Q\|^2_{\dot{H}^2}. 
	\]
	Therefore, there exists a sequence $(x_n)_{n\geq 1}$ in $\R^d$ such that up to a subsequence,
	\[
	v_n(\cdot + x_n)  = \lambda_n^{\frac{4}{\alpha}} u(t_n, \lambda_n \cdot + x_n) \rightharpoonup V \text{ weakly in } \dot{H}^{\Gact} \cap \dot{H}^2,
	\]
	as $n \rightarrow \infty$ with $\|V\|_{\dot{H}^{\Gact}} \geq S_{\text{gs}}$. 
	Since $v_n(\cdot +x_n) \rightharpoonup V$ weakly in $\dot{H}^{\Gact}\cap \dot{H}^2$ as $n\rightarrow \infty$, the semi-continuity of weak convergence and $(\ref{property v_n intercritical})$ imply
	\[
	\|V\|_{\dot{H}^{\Gact}} \leq \liminf_{n\rightarrow \infty} \|v_n\|_{\dot{H}^{\Gact}} \leq S_{\text{gs}}. 
	\]
	This together with the fact $\|V\|_{\dot{H}^{\Gact}} \geq S_{\text{gs}}$ show that
	\begin{align}
	\|V\|_{\dot{H}^{\Gact}} = S_{\text{gs}} = \lim_{n\rightarrow \infty} \|v_n\|_{\dot{H}^{\Gact}}. \label{H dot gamma norm v_n intercritical}
	\end{align}
	Therefore
	\[
	v_n(\cdot +x_n) \rightarrow V \text{ strongly in } \dot{H}^{\Gact} \text{ as } n\rightarrow \infty. 
	\]
	On the other hand, the Gagliardo-Nirenberg inequality $(\ref{sharp gagliardo-nirenberg inequality intercritical 1})$ shows that $v_n(\cdot +x_n) \rightarrow V$ strongly in $L^{\alpha+2}$ as $n \rightarrow \infty$. Indeed, by $(\ref{property v_n intercritical 1})$,
	\begin{align*}
	\|v_n(\cdot +x_n) -V\|^{\alpha+2}_{L^{\alpha+2}} &\lesssim \|v_n(\cdot + x_n) - V\|^{\alpha}_{\dot{H}^{\Gact}} \|v_n(\cdot + x_n) - V\|^2_{\dot{H}^2} \\
	&\lesssim (\|Q\|_{\dot{H}^2} + \|V\|_{\dot{H}^2})^2 \|v_n(\cdot +x_n) -V\|^{\alpha}_{\dot{H}^{\Gact}} \rightarrow 0,
	\end{align*}
	as $n\rightarrow \infty$. Moreover, using $(\ref{convergence v_n intercritical})$ and $(\ref{H dot gamma norm v_n intercritical})$, the sharp Gagliardo-Nirenberg inequality $(\ref{sharp gagliardo-nirenberg inequality intercritical 1})$ yields
	\begin{align*}
	\|Q\|^2_{\dot{H}^2} = \frac{2}{\alpha+2} \lim_{n\rightarrow \infty} \|v_n\|^{\alpha+2}_{L^{\alpha+2}} = \frac{2}{\alpha+2} \|V\|^{\alpha+2}_{L^{\alpha+2}} \leq \Big(\frac{\|V\|_{\dot{H}^{\Gact}}}{S_{\text{gs}}} \Big)^{\alpha} \|V\|^2_{\dot{H}^2} = \|V\|^2_{\dot{H}^2},
	\end{align*}
	or $\|Q\|_{\dot{H}^2} \leq \|V\|_{\dot{H}^2}$. By the semi-continuity of weak convergence and $(\ref{property v_n intercritical 1})$, 
	\[
	\|V\|_{\dot{H}^2} \leq \liminf_{n\rightarrow \infty} \|v_n\|_{\dot{H}^2} = \|Q\|_{\dot{H}^2}. 
	\]
	Therefore,
	\begin{align}
	\|V\|_{\dot{H}^2} = \|Q\|_{\dot{H}^2} = \lim_{n\rightarrow \infty} \|v_n\|_{\dot{H}^2}.\label{H dot 1 norm v_n intercritical}
	\end{align}
	Combining $(\ref{H dot gamma norm v_n intercritical}), (\ref{H dot 1 norm v_n intercritical})$ and using the fact $v_n(\cdot + x_n) \rightharpoonup V$ weakly in $\dot{H}^{\Gact} \cap \dot{H}^2$, we conclude that 
	\[
	v_n(\cdot + x_n) \rightarrow V \text{ strongly in } \dot{H}^{\Gact} \cap \dot{H}^2 \text{ as } n\rightarrow \infty.
	\]
	In particular, we have
	\[
	E(V) = \lim_{n\rightarrow \infty} E(v_n) =0.
	\]
	This shows that there exists $V \in \dot{H}^{\Gact} \cap \dot{H}^2$ such that
	\[
	\|V\|_{\dot{H}^{\Gact}} = S_{\text{gs}}, \quad E(V) =0.
	\]
	By Lemma $\ref{lem characterization critical norm intercritical}$, there exists $g\in \mathcal{G}$ such that $V(x) = e^{i\theta} \lambda^{\frac{4}{\alpha}} g(\lambda x +x_0)$ for some $\theta \in \R, \lambda>0$ and $x_0 \in \R^d$. Thus
	\[
	v_n(\cdot + x_n) = \lambda_n^{\frac{4}{\alpha}} u(t_n, \lambda_n \cdot + x_n) \rightarrow V = e^{i\theta} \lambda^{\frac{4}{\alpha}} g(\lambda \cdot +x_0) \text{ strongly in } \dot{H}^{\Gact} \cap \dot{H}^2 \text{ as } n \rightarrow \infty.
	\]
	Redefining variables as
	\[
	\overline{\lambda}_n:= \lambda_n \lambda^{-1}, \quad \overline{x}_n:= \lambda_n \lambda^{-1} x_0 +x_n,
	\]
	we get
	\[
	e^{-i\theta} \overline{\lambda}^{\frac{4}{\alpha}}_n u(t_n, \overline{\lambda}_n \cdot + \overline{x}_n) \rightarrow g \text{ strongly in } \dot{H}^{\Gact} \cap \dot{H}^2 \text{ as } n\rightarrow \infty.
	\]
	This proves $(\ref{limiting profile critical norm proof intercritical})$ and the proof is complete.
\end{proof}

\section*{Acknowledgments}
The author would like to express his deep thanks to his wife - Uyen Cong for her encouragement and support. He would like to thank his supervisor Prof. Jean-Marc Bouclet for the kind guidance and constant encouragement. He also would like to thank the reviewer for his/her helpful comments and suggestions. 



\begin{thebibliography}{99}
\bibitem{Ben-ArtziKochSaut} {\bf M. Ben-Artzi, H. Koch, J. C. Saut} {\it Disperion estimates for fourth-order Schr\"odinger equations}, C.R.A.S. 330 (2000), S\'erie 1, 87--92.

\bibitem{BaruchFibichMandelbaum} {\bf G. Baruch, G. Fibich, E. Mandelbaum}, {\it Singular solutions of the biharmonic nonlinear Sch\"odinger equation}, SIAM J. Appl. Math. 70 (2010), 3319--3341.

\bibitem{BaruchFibich} {\bf G. Baruch, G. Fibich}, {\it Singular solutions of the $L^2$-supercritical biharmonic nonlinear Schr\"odigner equation}, Nonlinearity 24 (2011), No. 6, 1843--1859.

\bibitem{BerghLofstom} {\bf J. Bergh, J. L\"ofst\"om}, {\it Interpolation spaces}, Springer, New York, 1976.

\bibitem{BoulengerLenzmann} {\bf T. Boulenger, E. Lenzmann}, {\it Blowup for biharmonic NL4S}, Ann. Sci. \'Ec. Norm. Sup\'er. 50 (2017), No. 3, 503--544.

\bibitem{ChoOzawaXia} {\bf Y. Cho, T. Ozawa, S. Xia}, {\it Remarks on some dispersive estimates}, Commun. Pure Appl. Anal. 10 (2011), No. 4, 1121--1128.

\bibitem{Dinhfract} {\bf V. D. Dinh}, {\it Well-posedness of nonlinear fractional Schr\"odinger and wave equations in Sobolev spaces}, \url{arXiv:1609.06181}, 2016.

\bibitem{Dinhfourt} {\bf V. D. Dinh}, {\it On well-posedness, regularity and ill-posedness for the nonlinear fourth-order Schr\"odinger equation}, Bull. Belg. Math. Soc. Simon Stevin, to appear \url{arXiv:1703.00891}, 2017.

\bibitem{Dinhblowup} {\bf V. D. Dinh}, {\it On the focusing mass-critical nonlinear fourth-order Schr\"odinger equation below the energy space}, Dyn. Partial Differ. Equ. 14 (2017), No. 3, 295--320.

\bibitem{FibichIlanPapanicolaou} {\bf G. Fibich, B. Ilan, G. Papanicolaou}, {\it Self-focusing with fourth-order dispersion}, SIAM J. Appl. Math. 62 (2002), 1437--1462.

\bibitem{GinibreVelo} {\bf J. Ginibre, G. Velo}, {\it The global Cauchy problem for the nonlinear Klein-Gordon equation}, Math. Z. 189 (1985), 487--505.

\bibitem{Guoblowup} {\bf Q. Guo}, {\it A note on concentration for blowup solutions to supercritical Schr\"odinger equations}, Proc. Amer. Math. Soc. 141 (2013), No. 12, 4215--4227.

\bibitem{HaoHsiaoWang06} {\bf C. Hao, L. Hsiao, B. Wang}, {\it Well-posedness for the fourth-order Schr\"odinger equations}, J. Math. Anal. Appl. 320 (2006), 246--265.

\bibitem{HaoHsiaoWang07} {\bf C. Hao, L. Hsiao, B. Wang}, {\it Well-posedness of the Cauchy problem for the fourth-order Schr\"odinger equations in high dimensions}, J. Math. Anal. Appl. 328 (2007), 58--83.

\bibitem{HmidiKeraani} {\bf T. Hmidi, S. Keraani}, {\it Blowup theory for the critical nonlinear Schr\"odinger equation revisited}, Int. Math. Res. Not. 46 (2005), 2815--2828.

\bibitem{HuoJia} {\bf Z. Huo, Y. Jia}, {\it The Cauchy problem for the fourth-order nonlinear Schr\"odinger equation related to the vortex filament}, J. Differential Equations 214 (2005), 1--35.

\bibitem{Kato} {\bf T. Kato}, {\it On nonlinear Schr\"odinger equations. II. $H^s$-solutions and unconditional well-posedness}, J. Anal. Math. 67 (1995), 281--306.

\bibitem{Karpman} {\bf V. I. Karpman}, {\it Stabilization of soliton instabilities by higher-order dispersion: Fourth order nonlinear Schr\"odinger-type equations}, Phys. Rev. E 53  (1996) (2), 1336--1339.

\bibitem{KarpmanShagalov} {\bf V. I. Karpman, A.G Shagalov}, {\it Stability of soliton described by nonlinear Schr\"odinger-type equations with higher-order dispersion}, Phys. D 144 (2000), 194--210.

\bibitem{KeelTao} {\bf M. Keel, T. Tao}, {\it Endpoint Strichartz estimates}, Amer. J. Math. 120 (1998), No. 5, 955--980.

\bibitem{MerleRaphael} {\bf F. Merle, P. Raphael}, {\it Blow up of critical norm for some radial $L^2$ super critical nonlinear Schr\"odinger equations}, Amer. J. Math. 130 (2008), No. 4, 945--978.

\bibitem{MiaoXuZhao09} {\bf C. Miao, G. Xu, L. Zhao}, {\it Global well-posedness and scattering for the defocusing energy critical nonlinear Schr\"odinger equations of fourth-order in the radial case}, J. Differential Equations 246 (2009), 3715--3749.

\bibitem{MiaoXuZhao11} {\bf C. Miao, G. Xu, L. Zhao}, {\it Global well-posedness and scattering for the defocusing energy critical nonlinear Schr\"odinger equations of fourth-order in dimensions $d\geq 9$}, J. Differential Equations 251 (2011), 3381--3402.

\bibitem{MiaoWuZhang}{\bf C. Miao, H. Wu, J. Zhang}, {\it Scattering theory below energy for the cubic fourth-order Schr\"odinger equation}, Math. Nachr. 288 (2015), No. 7, 798--823.

\bibitem{MiaoZhang} {\bf C. Miao, B. Zhang}, {\it Global well-posedness of the Cauchy problem for nonlinear Schr\"odinger-type equations}, Discrete Contin. Dyn. Syst. 17, No. 1, 181--200 (2007).

\bibitem{Pausader} {\bf B. Pausader}, {\it Global well-posedness for energy critical fourth-order Schr\"odinger equations in the radial case}, Dyn. Partial Differ. Equ. 4 (2007), No. 3, 197--225.

\bibitem{Pausadercubic} {\bf B. Pausader}, {\it The cubic fourth-order Schr\"odinger equation}, J. Funct. Anal. 256 (2009), 2473--2517.

\bibitem{Strauss} {\bf W. A. Strauss}, {\it Existence of solitary waves in higher dimensions}, Comm. Math. Phys. 55 (1977), No. 2, 149--162.

\bibitem{Triebel} {\bf H. Triebel}, {\it Theory of function spaces}, Basel Birkh\"auser, 1983.

\bibitem{ZhuYangZhang10} {\bf S. Zhu, H. Yang, J. Zhang}, {\it Limiting profile of the blow-up solutions for the fourth-order nonlinear Schr\"odinger equation}, Dyn. Partial Differ. Equ. 7 (2010), 187--205.

\bibitem{ZhuYangZhang_pc} {\bf S. Zhu, J. Zhang, H. Yang}, {\it $L^{p_c}$-concentration of blow-up solutions for the biharmonic nonlinear Schr\"odinger equation}, Appl. Anal. 89 (2010), No. 12, 1827--1835.

\bibitem{ZhuYangZhang11} {\bf S. Zhu, H. Yang, J. Zhang}, {\it Blow-up of rough solutions to the fourth-order nonlinear Schr\"odinger equation}, Nonlinear Anal. 74 (2011), 6186--6201.

\end{thebibliography}
\end{document}